\long\def\symbolfootnote[#1]#2{\begingroup
\def\thefootnote{\fnsymbol{footnote}}\footnote[#1]{#2}\endgroup}
\newtheorem{thm}{Theorem}[section]
\newtheorem{cor}[thm]{Corollary}
\newtheorem{sublem}[thm]{Sublemma}
\newtheorem{lemma}[thm]{Lemma}
\newtheorem{prop}[thm]{Proposition}
\newtheorem*{claim}{Claim}
\theoremstyle{definition}
\newtheorem{defin}[thm]{Definition}
\newtheorem{rem}[thm]{Remark}
\newtheorem*{rema}{Remark}
\newtheorem{example}[thm]{Example}
\newcommand{\centra}{\mathscr{Z}}
\newcommand{\la}{\langle}
\newcommand{\ra}{\rangle}
\newcommand{\FF}{\mathbf F}
\newcommand{\A}{\mathbb A}
\newcommand{\N}{\mathbf N}
\title{Bipolar Coxeter groups}
\author{}
\begin{document}
\maketitle

\begin{center}\bf
Pierre-Emmanuel Caprace$^a$\symbolfootnote[1]{Supported by the Belgian Fund for Scientific Research (FNRS).}
\& Piotr Przytycki$^b$\symbolfootnote[2]{Partially supported by
MNiSW grant N201 012 32/0718 and the Foundation for Polish Science.}
\end{center}

\begin{center}\it
$^a$ Universit\'e catholique de Louvain, D\'epartement de Math\'ematiques, \\
Chemin du Cyclotron 2, 1348 Louvain-la-Neuve, Belgium\\
\emph{e-mail:} \texttt{pe.caprace@uclouvain.be}
\end{center}

\begin{center}\it
$^b$ Institute of Mathematics, Polish Academy of Sciences,\\
\'Sniadeckich 8, 00-956 Warsaw, Poland\\
\emph{e-mail:} \texttt{pprzytyc@mimuw.edu.pl}
\end{center}

\begin{abstract}
\noindent
We consider the class of those Coxeter groups for which removing from the Cayley graph any tubular neighbourhood of any wall leaves exactly two connected components. We call these Coxeter groups \emph{bipolar}. They include both the virtually Poincar\'e duality Coxeter groups and the infinite irreducible $2$-spherical ones. We show in a geometric way that a bipolar Coxeter group admits a unique conjugacy class of Coxeter generating sets. Moreover, we provide a characterisation of bipolar Coxeter groups in terms of the associated Coxeter diagram.
\end{abstract}

%%%%%%%%%%%%%%%%%%%%%%%%%%%%%%%%%%%%%%%%%%%%%%%%
%%%%%%%%%%%%%%%%%%%%%%%%%%%%%%%%%%%%%%%%%%%%%%%%
\section{Introduction}
%%%%%%%%%%%%%%%%%%%%%%%%%%%%%%%%%%%%%%%%%%%%%%%%
%%%%%%%%%%%%%%%%%%%%%%%%%%%%%%%%%%%%%%%%%%%%%%%%

Much of the algebraic structure of a Coxeter group is determined
by the combinatorics of the walls and half-spaces of the
associated Cayley graph (or Davis complex). When investigating
rigidity properties of Coxeter groups, it is therefore natural to
consider the class of Coxeter groups whose half-spaces are
well-defined up to quasi-isometry. This motivates the following
definition.

Let $W$ be a finitely generated Coxeter group. Fix a Coxeter
generating set $S$ for $W$. Let $X$ denote the Cayley graph
associated with the pair $(W, S)$. An element $s\in S$ is called
\textbf{bipolar} if any tubular neighbourhood of the $s$-invariant
wall $\mathcal W_s$ separates $X$ into exactly two connected
components. In fact, we shall later give an alternative
Definition~\ref{defin:bipolar} and prove equivalence with this one
in Lemma~\ref{lem:clarif}. Another equivalent condition is
$$\tilde e(W, \centra_W(s)) = 2,$$
where $\tilde e(\cdot, \cdot)$ is the quasi-isometry invariant introduced by Kropholler and Roller in \cite{KR}. See Appendix~\ref{App} for details.

We further say that $W$ is \textbf{bipolar} if it admits some Coxeter generating set all of whose elements are bipolar. We will prove, in Corollary~\ref{cor:refl}, that if $W$ is bipolar, then every Coxeter generating set consists of bipolar elements.

A basic class of examples of bipolar Coxeter groups is provided by the following.

\begin{prop}\label{prop:manifold}
A Coxeter group which admits a proper and cocompact action on a contractible manifold is bipolar.
\end{prop}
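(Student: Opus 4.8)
The plan is to prove that \emph{every} element $s$ of the given Coxeter generating set $S$ is bipolar, i.e. that $\tilde e(W,\centra_W(s))=2$. We may assume $W$ infinite, so that $M$ is noncompact. Since $W$ acts properly and cocompactly both on the Cayley graph $X$ and on the contractible manifold $M$, these spaces are quasi-isometric, and as $\tilde e(\cdot,\cdot)$ is a quasi-isometry invariant it may be computed from the action on $M$. I work throughout with $\FF_2$-coefficients, so that P.\,A.\ Smith theory is available for the involution $s$ acting on $M$. Write $M^s$ for the fixed-point set; since $\langle s\rangle=\{e,s\}$ one has $\centra_W(s)=N_W(\la s\ra)$, and this subgroup acts properly and cocompactly on $M^s$. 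Thus $M^s$ is a coarse model for $\centra_W(s)$ sitting inside $M$, and $\tilde e(W,\centra_W(s))$ equals the number of deep (noncompact) components of the complement of a large neighbourhood of $M^s$ in $M$.

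The heart of the argument is to show that $M^s$ has codimension exactly one. As $M$ is contractible it is $\FF_2$-acyclic, so by Smith theory $M^s$ is nonempty, connected and $\FF_2$-acyclic; in particular it is an $\FF_2$-homology submanifold of $M$. Since $s\neq 1$ acts nontrivially, $M^s$ is a proper closed subset of the connected $n$-manifold $M$, whence $\dim_{\FF_2}M^s\le n-1$. For the reverse inequality I invoke the combinatorics of $(W,S)$: the wall $\mathcal W_s$ cuts $X$ into the two half-spaces $\mathcal H_s^+$ and $\mathcal H_s^-$, each convex, hence connected, and infinite; this already exhibits at least two deep components, so $\tilde e(W,\centra_W(s))\ge 2$. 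Were $M^s$ of codimension at least two, Alexander duality in $M$ would force $M\setminus M^s$ to be connected and $\tilde e(W,\centra_W(s))$ to equal $1$, a contradiction. Hence $\dim_{\FF_2}M^s=n-1$.

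Once codimension one is known, the normal mod-$2$ sphere of $M^s$ in $M$ is an $\FF_2$-homology $S^0$, so there is a neighbourhood $N$ of $M^s$ that $\FF_2$-retracts onto $M^s$ and whose complement $N\setminus M^s$ splits into two pieces interchanged by $s$, each $\FF_2$-equivalent to $M^s$. Writing $U=M\setminus M^s$, the Mayer--Vietoris sequence of the open cover $\{U,N\}$ of $M$ gives
\[
H^0(M)\to H^0(U)\oplus H^0(N)\to H^0(U\cap N)\to H^1(M).
\]
Here $H^0(M)=\FF_2$, $H^0(N)=\FF_2$ (as $N\simeq_{\FF_2}M^s$ is connected), $H^0(U\cap N)=\FF_2^2$ (two copies of the connected $M^s$), and $H^1(M)=0$ by $\FF_2$-acyclicity; a dimension count then forces $\dim_{\FF_2}H^0(U)=2$. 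Thus $M\setminus M^s$ has exactly two components, swapped by $s$, and since $\centra_W(s)$ acts cocompactly on their common frontier $M^s$ both are deep. Therefore $\tilde e(W,\centra_W(s))=2$, so $s$ is bipolar; as $s\in S$ was arbitrary, $W$ is bipolar.

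The main obstacle is the codimension-one claim, and within it the transition from the purely combinatorial separation of $X$ by $\mathcal W_s$ to a homological statement about $M^s$: this is precisely where the quasi-isometry invariance of $\tilde e$ is indispensable. A secondary delicate point is that $M^s$ is a priori only an $\FF_2$-homology manifold, not necessarily locally flat, so the two-sided neighbourhood used in the Mayer--Vietoris count must be produced homologically. I would carry out that step entirely at the level of $\FF_2$-\v{C}ech cohomology and Alexander duality in $M$, using the vanishing of the normal mod-$2$ sphere's reduced cohomology, so that local flatness of $M^s$ is never required.
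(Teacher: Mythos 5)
Your strategy is genuinely different from the paper's: the paper simply observes that $W$ is a virtual Poincar\'e duality group of dimension $n$, quotes Davis to get that $\centra_W(s)$ is a virtual Poincar\'e duality group of dimension $n-1$, and then quotes Kropholler--Roller (plus Lemma~\ref{lem:tildeecoincides}) to conclude $\tilde e(W,\centra_W(s))=2$; you instead try to work directly on $M$ with Smith theory. But your argument has a genuine gap at its final, decisive step: you conflate ``$M\setminus M^s$ has exactly two (deep) components'' with ``the pair $(M,M^s)$ has exactly two poles''. Bipolarity is a statement about the complements of \emph{all tubular neighbourhoods} $\mathcal N^M_k(M^s)$, and removing a fat neighbourhood can cut each side of $M^s$ into several deep pieces even though $M\setminus M^s$ itself has only two components. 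This distinction is exactly the content of the paper: in the Cayley graph \emph{every} wall separates $X$ into two deep half-spaces, yet not every reflection is bipolar (Example~\ref{ex}); the failure occurs precisely when some reflection $t\neq r$ commuting with $r$ is \emph{dominated}, i.e.\ $\mathcal W_t$ is trapped in a tubular neighbourhood of $\mathcal W_r$ (Theorem~\ref{thm:BipolarReflection}(ii)). Your Mayer--Vietoris computation of $H^0(M\setminus M^s)$ is blind to this phenomenon: if some reflection $t\neq s$ commuting with $s$ satisfied $M^t\subset\mathcal N^M_C(M^s)$, then $M\setminus M^s$ would still have exactly two components, while for $k>C$ any path between $v$ and $t.v$ (both far from $M^s$, on the same side of $M^s$) would have to cross $M^t\subset\mathcal N^M_k(M^s)$, so $M\setminus\mathcal N^M_k(M^s)$ would have at least four deep components and $s$ would not be bipolar. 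Nothing in your proof excludes this configuration, since every step you carry out (Smith theory for $s$, cocompactness of $\centra_W(s)$ on $M^s$, codimension one, the $H^0$ count) would hold verbatim in its presence. Excluding it is where the paper really needs Poincar\'e duality \emph{of the centraliser}; in your language, what is needed is $\check{H}^{n-1}_c\bigl(\mathcal N^M_k(M^s);\FF_2\bigr)\cong\FF_2$ for every $k$, which does not follow from acyclicity and codimension of $M^s$ alone, and is essentially the Kropholler--Roller computation you were trying to avoid.

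Two secondary points. First, your lower bound $\tilde e(W,\centra_W(s))\ge 2$ asserts that the two half-spaces of $\mathcal W_s$ are ``infinite, hence deep''; infinitude is not enough, since a half-space can be infinite yet contained in a tubular neighbourhood of its wall, which happens exactly when $s$ lies in a conjugate of a spherical factor (Lemma~\ref{lem:Reflection:basic}(ii) and Corollary~\ref{lem:SphFactor}). So you must also rule out spherical factors from the manifold hypothesis (this can be done by a cohomological-dimension argument, but it is an argument, not a remark). Second, the same neighbourhood-versus-set conflation infects your codimension step: connectedness of $M\setminus M^s$ via Alexander duality does not by itself give ``one pole'', because poles are again counted in complements of neighbourhoods. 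The codimension-one claim and the separation of $M$ by $M^s$ are salvageable with careful \v{C}ech-theoretic work, as you anticipate; the passage from there to the pole count of the pair is the part of the proof that is actually missing.
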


\begin{proof}
The Coxeter group $W$ in question is a virtual Poincar\'e duality group of dimension $n$. By \cite[Corollary~5.6]{Davis}, for each $s\in S$ its centraliser $\centra_W(s)$ is a virtual Poincar\'e duality group of dimension $n-1$. Then, in view of \cite[Corollary~4.3]{KR}, there is a finite index subgroup $W_0$ of $W$ satisfying $\tilde e(W_0, W_0\cap \centra_W(s)) = 2$.
Using \cite[Lemma 2.4(iii)]{KR} we then also have $\tilde e(W, \centra_W(s)) = 2$. By Lemma~\ref{lem:tildeecoincides} below this means that $s$ is bipolar, as desired.
\end{proof}

No purely combinatorial criterion in terms of the Coxeter diagram seems to be known to decide whether a given Coxeter group acts properly and cocompactly on a contractible manifold. On the other hand, the following result provides a characterisation of bipolarity in terms of the Coxeter diagram.

All the relevant notions are recalled in Section~\ref{sec:preliminaries} below. The only less standard terminology is that we call two elements $s, s'$ of some Coxeter generating set $S$ \textbf{odd-adjacent} if the order of $ss'$ is finite and odd. This turns $S$ into the vertex set of a graph whose connected components are called the \textbf{odd components} of $S$.

\begin{thm}\label{thm:bipolar}
A finitely generated Coxeter group $W$ is bipolar if and only if it
admits some Coxeter generating set $S$ satisfying the following three
conditions.
\begin{enumerate}[(a)]
\item There is no spherical irreducible component $T$ of $S$.

\item There are no $I\subset T$ with $T$ irreducible and $I$ non-empty spherical such that $I\cup T^\perp$ separates some vertices of the Coxeter diagram of $S$.

\item If $T \subset S$ is irreducible spherical and an odd component $O$ of $S$ is contained in $T^\perp$, then there are adjacent $t \in O$ and $t' \in S \setminus (T \cup T^\perp)$.
\end{enumerate}
\end{thm}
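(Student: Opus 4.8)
The plan is to reduce Theorem~\ref{thm:bipolar} to a local criterion, one generator at a time, and then to match the resulting combinatorial data against conditions (a)--(c). By definition $W$ is bipolar if and only if it possesses a Coxeter generating set all of whose elements are bipolar, so it is enough to prove, for a fixed Coxeter system $(W,S)$, that every $s\in S$ is bipolar if and only if $S$ satisfies (a), (b) and (c). Fixing $s\in S$, I would first translate bipolarity into connectivity of a single half-space. A wall always separates the Cayley graph $X$ into exactly two half-spaces $X_s^+, X_s^-$, and any path joining the two sides must meet every tubular neighbourhood of $\mathcal W_s$; hence deleting such a neighbourhood only cuts connections and never creates them, so the number of components of $X\setminus N(\mathcal W_s)$ equals twice the number of components of $X_s^+\setminus N(\mathcal W_s)$, as $s$ swaps the two sides isometrically. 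Therefore $s$ is bipolar exactly when $X_s^+$ remains connected after removing a neighbourhood of the wall, which by Lemma~\ref{lem:tildeecoincides} is the condition $\tilde e(W,\centra_W(s))=2$.

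The next step is to understand $\centra_W(s)$ and the wall itself. The stabiliser of $\mathcal W_s$ in $W$ coincides with $\centra_W(s)$, and since $s$ acts trivially on $\mathcal W_s$ while interchanging the two half-spaces, one obtains a direct product decomposition $\centra_W(s)=\langle s\rangle\times Z_s$, where $Z_s$ is the index-two subgroup preserving each side and acting faithfully, properly and cocompactly on $\mathcal W_s$. Using Brink's description of centralisers of reflections, I would exhibit a Coxeter generating set $S_s$ for $Z_s$: its perpendicular generators are the $t\in S$ with $m(s,t)=2$, while each odd-adjacency path issuing from $s$ contributes a reflection of the wall. This identifies $\mathcal W_s$ with the Davis complex of the Coxeter system $(Z_s,S_s)$ and reduces the count of components of $X_s^+\setminus N(\mathcal W_s)$ to a filtered-ends computation for the pair $(W,\centra_W(s))$, namely to deciding when $\tilde e(W,\centra_W(s))=2$ rather than $>2$.

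With this dictionary in place, I would prove the two implications by exhibiting a trichotomy of obstructions to bipolarity, each matching one forbidden configuration. First, if some irreducible component $T$ of $S$ is spherical, then $W$ splits off the finite direct factor $W_T$, so a neighbourhood of large radius of the wall of any $s\in T$ swallows the entire $W_T$-direction; the complement then fails to have exactly two components and $s$ is not bipolar. This is the content of (a). Second, a non-empty spherical $I\subset T$ with $I\cup T^\perp$ separating the diagram produces a spherical residue along which the half-space of a suitable generator pinches into several components, giving $\tilde e>2$; conversely any such finite-scale splitting of a half-space yields a separating pair of this form, so the absence of these configurations is exactly (b). The remaining, purely asymptotic, obstruction is captured by (c): a spherical irreducible $T$ together with an odd component $O\subseteq T^\perp$ that is not anchored to $S\setminus(T\cup T^\perp)$ by any adjacency lets the finite factor $W_T$ spawn an extra end transverse to the walls of the generators of $O$, again forcing $\tilde e>2$; requiring an anchoring adjacency $t\in O$, $t'\in S\setminus(T\cup T^\perp)$ removes it. Running these equivalences over all $s\in S$ and assembling the local criteria yields the three global conditions.

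The main obstacle is the analysis behind (c). Unlike (a) and (b), it is invisible at every finite scale: the extra component produced by an unanchored odd component $O\subseteq T^\perp$ only materialises at infinity, so deciding whether $X_s^+$ reconnects far from the wall requires studying the boundary $\bd X$ and the way the finite factor $W_T$ interacts, through $T^\perp$, with the reflections of $O$ inside the wall group $Z_s$. I expect the delicate point to be proving that an anchoring adjacency is simultaneously necessary and sufficient to merge the two asymptotic pieces, and, dually, that assembling the per-generator criteria into the single statements (a)--(c) loses no information --- in particular that the quantifier over all $s\in S$ collapses precisely onto the stated diagram conditions rather than a longer list.
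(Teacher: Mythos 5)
Your proposal is a programme rather than a proof, and the one concrete structural tool it introduces is incorrect. You assert that $\centra_W(s)=\la s\ra\times Z_s$ with $Z_s$ a Coxeter group (with a generating set $S_s$ obtained from Brink's description of centralisers) acting on $\mathcal W_s$ as on its Davis complex. Brink's theorem says something genuinely weaker: the centraliser of a reflection is $\la s\ra\times(R\rtimes F)$ with $R$ a reflection subgroup and $F$ a \emph{free} group (the fundamental group of the odd diagram), and $Z_s$ need not be a Coxeter group at all. Already for $W$ of type $\widetilde A_2$ the wall is a line and its stabiliser is $\la s\ra\times\mathbb{Z}$, so $Z_s\cong\mathbb{Z}$ carries no Coxeter structure and $\mathcal W_s$ is not a Davis complex; the ``dictionary'' you build on this identification collapses. (A smaller issue of the same kind: your opening reduction of bipolarity to connectedness of one half-space minus a neighbourhood silently discards components trapped inside tubular neighbourhoods of $\mathcal W_s$; ruling those out is exactly the content of Lemma~\ref{lem:clarif} and requires Corollary~\ref{lem:SphFactor}.)

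More importantly, both implications of the theorem are left as heuristics. Phrases such as ``produces a spherical residue along which the half-space pinches'', ``spawns an extra end'', and ``requiring an anchoring adjacency removes it'' name the phenomena to be proved but contain no argument, and your final paragraph concedes that you do not know how to prove either direction of (c), nor that the per-generator criteria assemble into exactly the list (a)--(c). These are precisely the hard points, and the paper needs substantial machinery absent from your plan: the intermediate notion of a \emph{nearly bipolar} reflection and its characterisation (Theorem~\ref{thm:NearlyBipolarReflection}), proved by building chains of pairwise intersecting walls avoiding $\mathcal N^X_k(\mathcal W_r)$ via the Strong Parallel Wall Theorem (Lemma~\ref{lem:3refl}), by playing such chains against the Bass--Serre tree of the splitting of $W$ over $W_{J_{v,r}\cup U_{v,r}}$ (resp.\ over $W_{I\cup T^\perp}$) to reach a contradiction when the diagram is separated, and by a lexicographic ``trace'' argument for connectedness; then the equivalence ``bipolar $\Leftrightarrow$ nearly bipolar and dominating no commuting reflection $t\neq r$'' (Theorem~\ref{thm:BipolarReflection}), in which condition (c) enters through Lemma~\ref{lem:OddComp} and a two-case analysis according to whether $t\in T^\perp$ or not. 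Finally, the conditions must be verified at \emph{every} vertex $v$, i.e.\ for every support $T_{v,r}$, not generator by generator at the identity vertex; the paper closes the loop you worry about by observing that every irreducible $T\subset S$ equals $T_{v_0,r}$ for a reflection $r\in W_T$ of full support. Without these ingredients the equivalence you want does not follow from what you have written.
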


\begin{cor}\label{cor:2sph}
Any infinite irreducible $2$-spherical Coxeter group is bipolar.
\end{cor}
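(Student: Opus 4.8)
The plan is to apply Theorem~\ref{thm:bipolar} directly: given an infinite irreducible $2$-spherical Coxeter group $W$, I would take any Coxeter generating set $S$ and verify that it satisfies conditions (a), (b) and (c). Two structural facts drive everything. First, irreducibility means the Coxeter diagram on $S$ is connected. Second, $2$-sphericality means every label $m(s,s')$ is finite, so the only non-edges of the diagram are the commuting pairs, and each such pair spans a spherical rank-two subset. Condition (a) is then immediate: the connected diagram has $S$ as its unique irreducible component, and since $W$ is infinite this component is non-spherical, so there is no spherical irreducible component at all.

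For condition (c) I would argue by connectivity. Suppose $T\subset S$ is irreducible spherical and an odd component $O$ of $S$ lies in $T^\perp$. Since $O\subseteq T^\perp$, no vertex of $O$ is adjacent to any vertex of $T$; as $T$ is non-empty we have $O\neq S$, so by connectedness some edge leaves $O$, and it cannot reach $T$. It therefore lands either in $R := S\setminus(T\cup T^\perp)$, which is exactly what (c) demands, or inside $T^\perp\setminus O$. Ruling out the second possibility is the subtle point: one must show that an edge from $O$ can be chosen so as not to stay trapped in $T^\perp$. I would do this by fixing a path in the connected diagram from $O$ to $T$, observing that such a path must pass through $R$ (because $T^\perp$ carries no edge to $T$), and then using the finiteness of the labels to slide the first exit from $T^\perp$ back to an endpoint lying in $O$, producing the required adjacent pair $t\in O$, $t'\in R$.

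I expect condition (b) to be the main obstacle. Here one must show that for every irreducible $T$ and every non-empty spherical $I\subseteq T$, the set $I\cup T^\perp$ does not separate the diagram. The natural strategy is to take two vertices supposedly separated by $I\cup T^\perp$, join them by a path in the full diagram, and rewrite that path so that it avoids $I\cup T^\perp$ entirely; the rewriting is powered by $2$-sphericality, since every non-edge being a spherical commuting pair supplies alternative connections. The vertices left after removal split into $T\setminus I$, which is knit together through the irreducible $T$ except where $I$ cuts it, and $R$, each of whose vertices is by definition adjacent to $T$. The delicate regime is when $I$ is a large spherical subset of a small irreducible $T$: then $T\setminus I$ may be too thin to bridge on its own, and the reconnection must be routed through $R$ and back. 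Establishing that this routing always exists, uniformly in $T$ and $I$, is the heart of the argument, after which Theorem~\ref{thm:bipolar} yields that $W$ is bipolar.
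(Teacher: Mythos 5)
Your proposal founders on a misreading of the paper's conventions, and this is not a cosmetic issue. The paper states explicitly (Section~\ref{sec:preliminaries}) that ``adjacent'' means \emph{spherical pair}, i.e.\ $m(s,s')<\infty$, and that its Coxeter graph is \emph{not} the standard diagram: the non-edges are exactly the pairs generating an infinite dihedral group, so commuting pairs \emph{are} edges. Conditions (b) and (c) of Theorem~\ref{thm:bipolar} refer to adjacency and separation in this graph. Consequently, for a $2$-spherical generating set the relevant graph is \emph{complete}: no subset whatsoever separates it, so (b) is vacuous, and (c) reduces to the non-emptiness of $R=S\setminus(T\cup T^\perp)$, which follows at once from irreducibility of $S$ (if $S=T\cup T^\perp$ with $T$ non-empty spherical, then $T\neq S$ since $W$ is infinite, contradicting irreducibility). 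Together with your verification of (a), which is correct, that is the entire proof; there is no ``heart of the argument'' left to supply.

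You instead read ``adjacent'' and ``separates'' in the standard sense (edges = non-commuting pairs), and under that reading the statements you set out to prove in (b) and (c) are false, so no ``sliding'' or ``rerouting'' argument can complete your sketch. Concretely, take $S=\{a,b,c,d\}$ with $m(a,d)=m(d,c)=3$, $m(c,b)=6$, and all other pairs commuting. This system is $2$-spherical, irreducible, and infinite (the parabolic $\langle b,c,d\rangle$ is affine of type $\tilde G_2$). With $T=\{a\}$ one has $T^\perp=\{b,c\}$, the odd component $O=\{b\}$ lies in $T^\perp$, and $R=\{d\}$; but $b$ and $d$ commute, so there is no non-commuting pair between $O$ and $R$ at all --- your version of (c) fails, while the paper's version holds precisely because the commuting pair $\{b,d\}$ is adjacent in the paper's sense. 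Similarly, with $T=\{c,d\}$ and $I=\{c\}$ one has $T^\perp=\emptyset$, and deleting $I\cup T^\perp$ leaves $\{a,b,d\}$ in which $b$ commutes with both $a$ and $d$, so $b$ is isolated in the standard diagram --- your version of (b) fails as well. The fix is simply to use the paper's definitions, after which the corollary is immediate from Theorem~\ref{thm:bipolar}.
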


Bipolarity is thus a condition which is naturally shared by both
infinite irreducible $2$-spherical Coxeter groups and virtually
Poincar\'e duality Coxeter groups. By the works of Charney--Davis
\cite{CharneyDavis}, Franzsen--Howlett--M{\"u}hlherr \cite{FHM},
and Caprace--M\"uhlherr \cite{CM} the Coxeter groups in those two
classes are rigid in the sense that they admit a unique conjugacy
class of Coxeter generating sets. The following result shows that
this property is in fact shared by all bipolar Coxeter groups.

\begin{thm}\label{thm:CoxGenSet}
In a bipolar Coxeter group, any two Coxeter generating sets are conjugate.
\end{thm}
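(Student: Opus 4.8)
The plan is to prove the theorem in two stages. First I would establish \emph{reflection independence}, i.e.\ that any two Coxeter generating sets of $W$ give rise to the same set of reflections; then I would upgrade this to conjugacy by arguing that the combinatorial conditions characterising bipolarity leave no room for a nontrivial \emph{twist}.

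For the first stage, fix two Coxeter generating sets $S$ and $S'$, and write $R(S)$, $R(S')$ for the associated reflection sets (all conjugates of $S$, resp.\ of $S'$). It suffices to show that every element of $S'$ is a reflection of $(W,S)$: this gives $S'\subseteq R(S)$, and since $R(S)$ is closed under conjugation we deduce $R(S')\subseteq R(S)$, the reverse inclusion following by the same argument with the roles of $S$ and $S'$ exchanged. So let $s'\in S'$. By Corollary~\ref{cor:refl} every element of the generating set $S'$ is bipolar, whence by Lemma~\ref{lem:tildeecoincides} we have $\tilde e(W,\centra_W(s'))=2$, a condition intrinsic to the involution $s'$ that makes no reference to any generating set. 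I would then transport this geometrically: the identity is a quasi-isometry $X=\mathrm{Cay}(W,S)\to\mathrm{Cay}(W,S')$, and the robust two-sided separation produced by $s'$ in $\mathrm{Cay}(W,S')$ pushes forward to a separation of $X$ into exactly two components that persists under thickening. An involution separating $X$ in this strong fashion must fix a wall of $(W,S)$, hence be a reflection of $(W,S)$, giving $s'\in R(S)$ and so $R(S)=R(S')$.

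For the second stage I would use the structure theory of Coxeter generating sets sharing a fixed reflection set: any two of them are related by a sequence of elementary twists, each carried by a nonempty spherical subset $I$ of an irreducible component $T$ whose companion set $I\cup T^\perp$ disconnects the Coxeter diagram. Passing to a generating set $S$ furnished by Theorem~\ref{thm:bipolar}, condition~(b) forbids exactly such disconnecting configurations, condition~(c) rules out the residual odd-component twists supported on $T^\perp$, and condition~(a) excludes spherical factors that could be permuted outright. Hence no nontrivial twist is available, so every generating set with reflection set $R(S)$ is conjugate to $S$; combined with the reflection independence of the first stage, this shows that any two Coxeter generating sets of $W$ are conjugate.

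The main obstacle is the geometric identification underpinning the first stage: showing that a bipolar (equivalently, $\tilde e=2$) involution is genuinely a reflection of the reference system $(W,S)$, rather than merely an involution whose centraliser happens to have two coends. This is precisely where quasi-isometry invariance of the wall and half-space structure, rather than soft algebra, must be exploited. A secondary delicate point is verifying that conditions~(b) and~(c) correspond exactly to the available twist obstructions, so that forbidding them genuinely forces conjugacy.
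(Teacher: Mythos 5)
Your first stage is sound but redundant: it is precisely Corollary~\ref{cor:refl} of the paper, which already states that in a bipolar Coxeter group any two Coxeter generating sets are reflection-compatible (and that all their elements are bipolar), so you may simply cite it. The genuine gap is in your second stage. The ``structure theory'' you invoke --- that any two Coxeter generating sets sharing a fixed reflection set are related by a sequence of elementary twists --- is not a theorem; it is (the reflection version of) the twist conjecture. The strongest available result of this kind, Theorem~\ref{thm:twistrigid} from \cite{TwistRigid}, requires the two generating sets to be \emph{angle-compatible}, which is strictly stronger than reflection-compatible, and it applies only to twist-rigid groups. To make your argument run one would therefore need (1) the main results of \cite{HM} and \cite{MM} to upgrade reflection-compatibility to angle-compatibility, (2) a proof that conditions (a)--(c) of Theorem~\ref{thm:bipolar} imply twist-rigidity --- you flag this yourself as a ``secondary delicate point'', but it is a genuine missing step, not a routine check --- and (3) Theorem~\ref{thm:bipolar} itself. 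This package is exactly the alternative derivation acknowledged in the introduction of the paper; as a blind proof your version is incomplete because its central input is either unproven as you state it, or must be replaced by heavy external theorems you have not identified.

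The paper's own proof takes a different, self-contained route which your proposal does not touch: after Corollary~\ref{cor:refl}, the identity map is viewed as a quasi-isometry $f\colon X_1\to X_2$ between the two Cayley graphs; Sublemma~\ref{subl:quasiisometry} (quasi-isometry invariance of poles) together with bipolarity of every reflection yields a $W$-equivariant bijection $f'\colon \Psi_1\to\Psi_2$ between the sets of half-spaces; Lemma~\ref{lem:Fuchsian} is then used to show that $f'$ sends the pairs of half-spaces of a basis at the identity vertex of $X_1$ to \emph{geometric} pairs in $\Psi_2$; finally H\'ee's theorem \cite{Hee} converts pairwise geometric half-spaces into a basis, exhibiting $S_1$ as a conjugate of $S_2$. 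No twist machinery enters at all, and this is precisely what makes the argument work for all bipolar groups rather than only for those reachable by existing twist-rigidity results.
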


Before we discuss this result, we give an immediate corollary. A \textbf{graph automorphism} of a Coxeter group is an automorphism which permutes the elements of a given Coxeter generating set, and thus corresponds to an automorphism of the associated Coxeter diagram. An automorphism of a Coxeter group is called \textbf{inner-by-graph} if it is a product of an inner automorphism and a graph automorphism.

\begin{cor}\label{cor:Inner-by-graph}
Every automorphism of a bipolar Coxeter group is inner-by-graph.
\end{cor}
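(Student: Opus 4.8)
The plan is to deduce this immediately from Theorem~\ref{thm:CoxGenSet}. Fix a Coxeter generating set $S$ of $W$ and let $\phi$ be an arbitrary automorphism of $W$. The first step is to observe that $\phi(S)$ is again a Coxeter generating set: since $\phi$ is an isomorphism of $W$ onto itself, it carries the Coxeter presentation of $(W,S)$ to a presentation of $W$ on the generating set $\phi(S)$ with the same Coxeter matrix, so that $(W,\phi(S))$ is again a Coxeter system.

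Since $W$ is bipolar, Theorem~\ref{thm:CoxGenSet} applies to the two Coxeter generating sets $S$ and $\phi(S)$, yielding an element $w \in W$ with $w\,\phi(S)\,w^{-1} = S$ as subsets of $W$. Writing $\iota_w$ for the inner automorphism $x \mapsto w x w^{-1}$, the composite $\gamma := \iota_w \circ \phi$ then maps $S$ bijectively onto itself. An automorphism of $W$ stabilising a Coxeter generating set setwise is precisely a graph automorphism, as it induces a permutation of $S$ preserving the order of every product $s s'$, hence an automorphism of the associated Coxeter diagram. Thus $\gamma$ is a graph automorphism, and the identity $\phi = \iota_w^{-1} \circ \gamma = \iota_{w^{-1}} \circ \gamma$ exhibits $\phi$ as a product of an inner automorphism and a graph automorphism, which is exactly what it means for $\phi$ to be inner-by-graph.

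I expect no serious obstacle here: the entire content of the corollary is already packaged in Theorem~\ref{thm:CoxGenSet}, and the only point that deserves a word of justification is that an automorphism sends one Coxeter generating set to another, which is purely formal. The statement is genuinely an immediate corollary of the rigidity theorem.
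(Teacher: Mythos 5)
Your proof is correct and is exactly the argument the paper has in mind: the paper states this as an immediate corollary of Theorem~\ref{thm:CoxGenSet} without writing out a proof, and your deduction (apply the rigidity theorem to $S$ and $\phi(S)$, then compose with the inner automorphism to get a permutation of $S$, i.e.\ a graph automorphism) is the intended one. No gaps; the observation that $\phi(S)$ is again a Coxeter generating set with the same Coxeter matrix is the only point needing mention, and you handle it.
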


Theorem~\ref{thm:CoxGenSet} both generalises and unifies the main
results of \cite{CharneyDavis}, \cite{CM} and \cite{FHM}. The
proof we shall provide is self-contained and based on the fact
that the bipolar condition makes the half-spaces into a coarse
notion which is preserved under quasi-isometries coming from
changing the generating set.

Theorem~\ref{thm:CoxGenSet} resulted from an attempt to find a
geometric property of so called \emph{twist-rigid} Coxeter groups
which would provide an alternative proof of the following, which
is the main result from \cite{TwistRigid}.

\begin{thm}[{\cite[Theorem 1.1 and Corollary 1.3(i)]{TwistRigid}}]
\label{thm:twistrigid}
In a twist-rigid Coxeter group, any two angle-compatible Coxeter generating sets are conjugate.
\end{thm}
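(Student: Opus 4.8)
The plan is to deduce Theorem~\ref{thm:twistrigid} from Theorem~\ref{thm:CoxGenSet} by relating twist-rigidity to bipolarity through the combinatorial criterion of Theorem~\ref{thm:bipolar}. Recall that two Coxeter generating sets being \emph{angle-compatible} entails, in particular, that they determine the same set of reflections---hence the same collection of walls in the Cayley graph---and that they match up the finite dihedral reflection subgroups. The first observation I would record is that passing from one Coxeter generating set to another is a quasi-isometry of $W$, so that whenever $W$ is bipolar the half-space structure becomes a coarse invariant preserved by this change; this is precisely the mechanism underlying Theorem~\ref{thm:CoxGenSet}. Consequently, for bipolar $W$ the conclusion is immediate and in fact stronger than needed, since \emph{all} Coxeter generating sets, not merely the angle-compatible ones, are conjugate. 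The real work is therefore concentrated on the twist-rigid groups that fail to be bipolar.

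I would first reduce to the case where $W$ is irreducible and infinite: both twist-rigidity and angle-compatibility are compatible with the canonical decomposition of $W$ into irreducible factors (the spherical irreducible factors being absorbed by the classical rigidity of finite Coxeter groups), so it suffices to treat an irreducible infinite $W$. For such $W$ I would run through the three obstructions of Theorem~\ref{thm:bipolar}. Condition (a) holds automatically, as $W$ is infinite irreducible. Condition (b) is exactly a statement about a non-empty spherical subset separating the Coxeter diagram, and a separating spherical subset $J$ giving a decomposition $S = A \cup J \cup B$ is precisely the combinatorial configuration that enables an elementary twist. The point to establish is that in a twist-rigid group any such separation yields only an \emph{inner} twist, so that the two generating sets it relates are already conjugate and the separation can be absorbed rather than obstructing the argument; the refinement in condition (c), which additionally tracks an odd component contained in $T^\perp$ together with an adjacency requirement, demands the same kind of bookkeeping but with more care.

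The main obstacle I anticipate is exactly this residual analysis of the genuinely non-bipolar twist-rigid groups, namely those violating (b) or (c) while still admitting no nontrivial twist. Here the weaker conclusion of Theorem~\ref{thm:twistrigid}---conjugacy of angle-compatible rather than arbitrary generating sets---is essential, because bipolarity truly fails and the coarse half-space argument is no longer available. My approach would be to use a separating spherical $J$ to write $W$ as an amalgam $W_{A \cup J} *_{W_J} W_{B \cup J}$, to argue by induction on the number of such separations that each factor is rigid with respect to its induced angle-compatible generating sets, and then to recombine the pieces, invoking twist-rigidity to force that the only admissible gluing along $W_J$ is up to conjugation by an element normalising $W_J$---since any essentially different gluing would realise a nontrivial twist. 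The angle-compatibility hypothesis is what makes the reflections and finite dihedral angles across the amalgam agree, pinning down this gluing and closing the induction.
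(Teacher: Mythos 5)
Your strategy is ruled out by the paper itself. Theorem~\ref{thm:twistrigid} is not proved in this paper at all: it is imported from \cite{TwistRigid}, and the surrounding discussion states explicitly that by Theorem~\ref{thm:bipolar} many twist-rigid Coxeter groups are \emph{not} bipolar, ``hence one cannot use Theorem~\ref{thm:CoxGenSet} to deduce Theorem~\ref{thm:twistrigid}.'' The logical dependence runs in the opposite direction: Theorem~\ref{thm:twistrigid}, proved independently in \cite{TwistRigid}, combines with Theorem~\ref{thm:bipolar} and the main results of \cite{HM} and \cite{MM} to yield Theorem~\ref{thm:CoxGenSet}, and the paper moreover notes that the proof in \cite{TwistRigid} relies on the conjugacy of Coxeter generating sets in infinite irreducible $2$-spherical groups --- an argument of a different nature, not a consequence of bipolarity.

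The concrete error that makes your reduction collapse is the identification of condition (b) of Theorem~\ref{thm:bipolar} with the configuration enabling an elementary twist. Condition (b) concerns separation of the diagram by a set of the form $I\cup T^\perp$ with $I\subset T$ non-empty spherical and $T$ irreducible; the set $T^\perp$ is \emph{not} required to be spherical. A diagram can therefore be separated by such a set --- so that bipolarity fails --- while admitting no separating spherical subset at all, i.e.\ while being twist-rigid. Consequently the groups you relegate to a ``residual analysis'' are not a fringe case to be absorbed: they are the entire substance of the theorem, and for them neither Theorem~\ref{thm:CoxGenSet} nor the coarse half-space machinery is available. Your fallback sketch for this case is also internally inconsistent and circular: the amalgam $W = W_{A\cup J} *_{W_J} W_{B\cup J}$ requires a separating \emph{spherical} $J$, which is essentially the configuration twist-rigidity excludes (the sets $I\cup T^\perp$ witnessing failure of bipolarity do not give amalgams over finite subgroups), and the assertion that ``any essentially different gluing would realise a nontrivial twist'' is precisely the conclusion to be proved, offered without any mechanism. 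As it stands, the proposal proves the theorem only for bipolar groups, where it is an immediate corollary of Theorem~\ref{thm:CoxGenSet} and where the hypothesis of twist-rigidity is not even needed.
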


However, by Theorem~\ref{thm:bipolar} many twist-rigid Coxeter
groups are not bipolar, hence one cannot use
Theorem~\ref{thm:CoxGenSet} to deduce
Theorem~\ref{thm:twistrigid}. On the other hand, a combination of
Theorems~\ref{thm:twistrigid}~and~\ref{thm:bipolar} together with
the main results of \cite{HM} and \cite{MM} yields
Theorem~\ref{thm:CoxGenSet}. Despite of this fact, we believe that
the direct geometric proof we provide here sheds some light on
existing rigidity results on Coxeter groups.
Note for example that the proof of Theorem~\ref{thm:twistrigid}
which we give in \cite{TwistRigid} relies on the fact that in an
infinite irreducible $2$-spherical Coxeter group all Coxeter
generating sets are conjugate.

\medskip

The article is organised as follows. In Section~\ref{sec:Coxeter}
we collect some basic facts on Coxeter groups. In
Section~\ref{sec:rigidity section} we discuss properties of
bipolar Coxeter groups and prove Theorem~\ref{thm:CoxGenSet}. In
Section~\ref{sec:CharNearlyBiv} we characterise \emph{nearly
bipolar reflections}, which are reflections enjoying significant
geometric properties slightly weaker than the ones of bipolar
reflections. Finally, in Section~\ref{sec:bipolar reflections} we
characterise bipolar reflections and prove
Theorem~\ref{thm:bipolar}. In Appendix~\ref{App} we give a survey
on different approaches to the notion of \emph{poles}.

\subsection*{Acknowledgements} We thank Hausdorff Research Institute for Mathematics in Bonn and Erwin Schr\"odinger International Insitute for Mathematical Physics in Vienna, where the article was written. Special thanks are due to Michah Sageev for drawing our attention to \cite{KR}.

%%%%%%%%%%%%%%%%%%%%%%%%%%%%%%%%%%%%%%%%%%%%%%%%
%%%%%%%%%%%%%%%%%%%%%%%%%%%%%%%%%%%%%%%%%%%%%%%%
\section{Coxeter groups}
\label{sec:Coxeter}
%%%%%%%%%%%%%%%%%%%%%%%%%%%%%%%%%%%%%%%%%%%%%%%%
%%%%%%%%%%%%%%%%%%%%%%%%%%%%%%%%%%%%%%%%%%%%%%%%
\subsection{Preliminaries}
\label{sec:preliminaries}

Let $W$ be a finitely generated Coxeter group and let $S\subset W$ be a Coxeter generating set. We start with explaining the notions appearing in the statement of Theorem~\ref{thm:bipolar}.

Given a subset $J \subset S$, we set $W_J = \la J \ra$.
We say that $W_J$ is \textbf{spherical} if it is finite. The
subset $J$ is called \textbf{spherical} if $W_J$ is spherical. It
is called \textbf{$2$-spherical} if all of its two-element subsets
are spherical. Two elements of $S$ are called \textbf{adjacent} if
they form a spherical pair. This defines a graph with vertex set
$S$ which is called the \textbf{Coxeter graph}. We emphasize that
this terminology is not standard; for us a Coxeter graph is not a
labelled graph; the non-edges correspond to pairs of generators
generating an infinite dihedral group. In this terminology $J$ is
$2$-spherical if its Coxeter graph is a complete graph. A Coxeter
group is \textbf{$2$-spherical} if it admits a Coxeter generating
set $S$ which is $2$-spherical. A \textbf{path} in $S$ is a
sequence in $S$ whose consecutive elements are adjacent.

We denote by $J^\perp$ the subset of $S\setminus J$ consisting of
all elements commuting with all the elements of $J$. A subset $J
\subset S$ is called \textbf{irreducible} if it is not contained
in $K\cup K^\perp$ for some non-empty proper subset $K\subset J$.
An \textbf{irreducible component} of $s\in S$ in $J\subset S$ is
the maximal irreducible subset of $J$ containing $s$. If $J$
satisfies $S=J\cup J^\perp$, then $W_J$ is called a
\textbf{factor} of $W$.

\medskip

The Cayley graph associated with the pair $(W,S)$ with the path-metric in which the edges have length $1$ is denoted by $(X,d)$. The corresponding Davis complex is denoted by $\mathbb A$. A \textbf{reflection} is an element of $W$ conjugate to an element of $S$.
Given a reflection $r \in W$, we denote by $\mathcal W_r$ its fixed-point set in $X$, the \textbf{wall} associated with $r$. We use the notation $\mathcal W^\A_r$ for the fixed point set of $r$ in $\A$.
The two connected components of the complement of a wall are called \textbf{half-spaces}.
We say that two walls $\mathcal W_{r_1}, \mathcal W_{r_2}$ \textbf{intersect} if the corresponding $\mathcal W^\A_{r_1}, \mathcal W^\A_{r_2}$ intersect, \emph{i.e.} if $r_1r_2$ is of finite order.
The walls $\mathcal W_{r_1}, \mathcal W_{r_2}$ are \textbf{orthogonal}, if $r_1$ commutes with and is distinct from $r_2$.

A \textbf{parabolic} subgroup $P\subset W$ is a subgroup conjugate
to $W_T$ for some $T\subset S$. Any $P$-invariant translate of the
Cayley graph of $W_T$ in $X$ is called a \textbf{residue} of $P$.

If $v$ is a vertex of $X$ and $w$ is an element of $W$, we denote by $w.v$ the translate of $v$ in $X$ under the action of $w$.
\medskip

We will need some additional non-standard notation. Let $v$ be a
vertex of $X$. We say that $v$ is \textbf{adjacent} to a wall
$\mathcal W$ if the distance from $v$ to $\mathcal{W}$ equals
$\frac{1}{2}$. We denote by $S_v$ the set of all reflections with
walls adjacent to $v$. Thus $S_v$ is a Coxeter generating set
conjugate to $S$ \emph{via} the element mapping the identity
vertex to $v$. In particular, if $v$ is the identity vertex $v_0$, then
we have $S_{v_0}=S$. 
%PE added the following
Moreover generally, for any vertex $v$, there is a canonical bijection between $S_v$ and $S$ which is realised by the conjugation under the unique element of $W$ mapping $v$ to $v_0$. 
We say that a subset of $S_v$ is
\textbf{spherical, irreducible}, \emph{etc.}, if its conjugate in
$S$ is so. In particular, for $T\subset S_v$ we denote by
$T^\perp$ the subset of $S_v\setminus T$ consisting of elements
commuting with all the elements of $T$. Similarly, for $T\subset
S_v$ we denote $W_T=\la T \ra$. Note that in case $S_v=T\cup
T^\perp$ the parabolic subgroup $W_T$ is a conjugate of a factor
of $W$.

Let now $r$ be a reflection in $W$. We denote by $T_{v,r}$ the smallest subset of $S_{v}$ satisfying $r \in \la T_{v,r} \ra$. This set should be thought of as the \emph{support} of $r$ with respect to $S_v$.

By $J_{v, r}$ we denote the subset of $S_v$ consisting of elements $s$ satisfying  $d(s.v, \mathcal W_r)<d(v, \mathcal W_r)$ or $s=r$.
Observe that we have $J_{v,r}\subset T_{v,r}$.

Finally, let $U_{v, r}$ be the set of elements of $S_v$ commuting with $r$, but different from $r$. Equivalently (see \cite[Lemma 1.7]{BH}), $s$ belongs to $U_{v,r}$ if it satisfies $d(s.v, \mathcal W_r)=d(v, \mathcal W_r)$ and $s\neq r$. In particular $U_{v,r}$ is disjoint from $J_{v,r}$. We also have $T_{v, r}^\perp \subset U_{v, r}$. On the other hand, an easy computation shows
$$U_{v, r} \subset T_{v, r} \cup T_{v, r}^\perp.$$
We also have the following basic fact.

\begin{lemma}[{\cite[Lemma~8.2]{TwistRigid}}]\label{lem:shadow}
For any vertex $v$ of $X$ and any reflection $r$, the set $J_{v, r} \cup (U_{v, r}\cap T_{v, r})$ is spherical.
\end{lemma}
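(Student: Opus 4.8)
The plan is to pass to the Davis complex $\A$, exploit that it is CAT(0), and reduce the statement to the fact that a collection of walls bounding the fundamental chamber has a common point exactly when the corresponding subset of generators is spherical. The first move is to reduce to the case where $r$ has full support, i.e. $T_{v,r}=S_v$. Indeed, the support $T:=T_{v,r}$ of any reflection is irreducible: if $T=A\cup A^\perp$ nontrivially then $W_T=W_A\times W_{A^\perp}$ and a reflection must lie in one factor, contradicting minimality of $T$. Since residues are convex, hence gated, the distances $d(s.v,\mathcal W_r)$ for $s\in T$ and the commutation with $r$ may all be computed inside the residue of $W_T$ through $v$, on which $r$ has full support and $T^\perp=\varnothing$. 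As the set in question is contained in $T$ and sphericity is intrinsic, we may assume $T_{v,r}=S_v$, so that $U_{v,r}\cap T_{v,r}=U_{v,r}$. If $r\in S_v$ the set is $\{r\}$ and there is nothing to prove; otherwise, unwinding the distance descriptions of $J_{v,r}$ and $U_{v,r}$ recalled above, the set to be shown spherical is exactly
$$K:=\{\,s\in S_v : d(s.v,\mathcal W_r)\le d(v,\mathcal W_r)\,\},$$
the generators that do not increase the distance to the wall. It splits as $K=J_{v,r}\sqcup U_{v,r}$, the strict decreases and the equalities.

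Next I would record the key tool that a descent set is spherical, and observe that $J_{v,r}$ is one. Taking $v=v_0$, the definitions give $J_{v,r}=\{s\in S: \ell(sr)<\ell(r)\}$, the descent set of the involution $r$ (left and right descents coincide, as $r=r^{-1}$). To see that any descent set $D=\{s:\ell(rs)<\ell(r)\}$ is spherical, write $r=r^{D}r_{D}$ with $r_{D}\in W_{D}$ and $r^{D}$ the minimal representative of the coset $rW_{D}$, so that $\ell(r)=\ell(r^{D})+\ell(r_{D})$ and $\ell(r^{D}x)=\ell(r^{D})+\ell(x)$ for all $x\in W_{D}$. For $s\in D$ this forces $\ell(r_{D}s)<\ell(r_{D})$, so every generator of $W_{D}$ is a right descent of $r_{D}$; hence $r_{D}$ is a longest element of $W_{D}$, which exists only when $W_{D}$ is finite. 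Thus $J_{v,r}$ is spherical. A similar analysis carried out inside the wall $\mathcal W^\A_r$ — itself the Davis complex of the group generated by the reflections orthogonal to $r$ — should show that $U_{v,r}$ is spherical as well, being a descent set of the chamber of $\mathcal W^\A_r$ nearest the projection of $v$.

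It remains to combine the two halves, and this is where the real work lies: I would show that the reflections indexed by $K$ have a common fixed point in $\A$, so that $W_{K}$ is contained in the finite stabiliser of a point and $K$ is spherical. The natural candidate is the point of the closed chamber $\overline v$ that is, in a suitable sense, closest to the wall; the function $x\mapsto d_\A(x,\mathcal W^\A_r)$ is convex, and the hypothesis $d(s.v,\mathcal W_r)\le d(v,\mathcal W_r)$ says precisely that crossing $\mathcal W_s$ does not increase it. The goal is to argue that every such wall passes through this distinguished point, equivalently that the faces $\overline v\cap\mathcal W^\A_s$ with $s\in K$ have nonempty common intersection, which by the construction of $\A$ is equivalent to sphericity of $K$.

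The main obstacle is exactly this last step: controlling the orthogonal generators $U_{v,r}$ simultaneously with the descents $J_{v,r}$. The two families are geometrically different — the walls in $J_{v,r}$ tilt toward $\mathcal W_r$ while those in $U_{v,r}$ are perpendicular to it — and a naive nearest-point argument fails, since the perpendicular geodesic from $v$ to $\mathcal W_r$ need not meet every wall of $K$. One is therefore forced to use the intrinsic Coxeter structure of $\mathcal W^\A_r$ together with the gated projection of $v$ onto it, and to show that the ``toward'' reflections and the ``along'' reflections together bound a single cell of $\A$. The reductions and the descent-set computation are routine once the standard facts on supports, residues and the Davis complex are in place; it is the interaction of $J_{v,r}$ and $U_{v,r}$ that carries the content of the lemma.
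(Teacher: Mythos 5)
The paper itself gives no proof of this lemma---it is imported wholesale by the citation to \cite{TwistRigid}*{Lemma~8.2}---so your proposal has to stand on its own, and as written it does not: the step you defer (``combining the two halves'') is not a final verification but is the entire content of the statement. Sphericity is not preserved under unions (in the affine group $\tilde{A}_2$ every pair of generators is spherical while the full triple is not), so even a complete proof that $J_{v,r}$ is spherical and that $U_{v,r}\cap T_{v,r}$ is spherical would imply nothing about $J_{v,r}\cup(U_{v,r}\cap T_{v,r})$. What you do establish is fine: the reduction to full support is legitimate (residues are convex, $J_{v,r}\subset T_{v,r}$, and sphericity is intrinsic to $(W_{T_{v,r}},T_{v,r})$), and for a \emph{reflection} $r=r_\alpha$ the set $J_{v_0,r}$ does coincide with the descent set of $r$ (for a simple root $\beta\neq\alpha$ one has $r_\alpha(\beta)=\beta-2(\alpha,\beta)\alpha$, which is negative exactly when $(\alpha,\beta)>0$, i.e.\ exactly when the distance to $\mathcal W_r$ drops), so your longest-element argument correctly shows $J_{v,r}$ is spherical.

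The gap is the orthogonal part, and it cannot be closed by more of the same. First, your sketch for $U_{v,r}$ rests on a false structural claim: the wall $\mathcal W^\A_r$ is in general \emph{not} the Davis complex of the subgroup generated by the reflections orthogonal to $r$; by a theorem of Brink on centralizers of reflections, $\centra_W(r)$ is $\la r\ra\times(W^\perp\rtimes\Gamma)$ with $\Gamma$ a possibly nontrivial free group, and it is the whole centralizer, not $W^\perp$, that acts cocompactly on the wall, so ``descent sets of chambers of the wall'' is not available in the form you need. Second, and more decisively, descent-set technology can never reach $U_{v,r}\cap T_{v,r}$: the same root computation as above shows that if $s_\beta$ commutes with $r_\alpha$ and $\beta\neq\alpha$ then $r_\alpha(\beta)=\beta>0$, so $\ell(s_\beta r)>\ell(r)$; that is, \emph{no} element of $U_{v,r}$ is ever a descent of $r$. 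Concretely, in $B_2=\la s,t\mid (st)^4\ra$ with $r=sts$ and $v=v_0$, the descent set is $\{s\}$, while the lemma's set is $\{s,t\}$: the generator $t\in U_{v_0,r}\cap T_{v_0,r}$ is invisible to your method, yet it is exactly the kind of element the lemma is about. So the interaction of the ``toward'' walls and the ``along'' walls, which you explicitly leave open, requires a genuinely different mechanism---for instance producing a single point of the closed Davis chamber fixed by all the reflections in question and invoking the nerve property that mirrors of $\overline{C_v}$ have a common point if and only if the indexing subset of $S_v$ is spherical---and no such argument appears in the proposal.
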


We deduce a useful corollary.

\begin{cor}\label{lem:SphFactor}
Let $r \in W$ be a reflection not contained in a conjugate of any
spherical factor of $W$. Then every vertex $v$ of $X$ is adjacent
to some vertex $v'$ satisfying $d(v', \mathcal W_r) > d(v,
\mathcal W_r)$.
\end{cor}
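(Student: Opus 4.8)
The plan is to argue by contradiction. Suppose some vertex $v$ has no neighbour strictly farther from the wall, that is, every $s\in S_v$ satisfies $d(s.v,\mathcal W_r)\le d(v,\mathcal W_r)$. By the definitions recalled above, a generator $s\in S_v$ fails to increase the distance precisely when $s\in J_{v,r}$ (it decreases the distance, or equals $r$) or $s\in U_{v,r}$ (it preserves the distance). Hence the contradiction hypothesis amounts exactly to the equality $S_v=J_{v,r}\cup U_{v,r}$.

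First I would deduce from this that $S_v=T_{v,r}\cup T_{v,r}^\perp$. This follows by combining $S_v=J_{v,r}\cup U_{v,r}$ with the inclusions $J_{v,r}\subset T_{v,r}$ and $U_{v,r}\subset T_{v,r}\cup T_{v,r}^\perp$ recorded just before Lemma~\ref{lem:shadow}, the reverse inclusion being automatic. Consequently $W_{T_{v,r}}$ is a conjugate of a factor of $W$, as noted in the text for the case $S_v=T\cup T^\perp$.

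The decisive step is to recognise that, under the same hypothesis, the spherical set furnished by Lemma~\ref{lem:shadow} is in fact the whole support $T_{v,r}$. Indeed, since $T_{v,r}\subset S_v=J_{v,r}\cup U_{v,r}$ and $J_{v,r}\subset T_{v,r}$, intersecting with $T_{v,r}$ gives $T_{v,r}=J_{v,r}\cup(U_{v,r}\cap T_{v,r})$, and Lemma~\ref{lem:shadow} asserts that the right-hand side is spherical. Thus $W_{T_{v,r}}$ is finite, hence by the previous paragraph a conjugate of a spherical factor of $W$. Since $r\in W_{T_{v,r}}$ by the very definition of the support $T_{v,r}$, this contradicts the hypothesis that $r$ lies in no conjugate of a spherical factor, which completes the proof. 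The only point requiring care is the initial translation between the geometric statement that no neighbour of $v$ moves away from $\mathcal W_r$ and the set-theoretic equality $S_v=J_{v,r}\cup U_{v,r}$; once this is in place the argument is a short manipulation of the inclusions already at hand, so I anticipate no serious obstacle beyond correctly invoking Lemma~\ref{lem:shadow}.
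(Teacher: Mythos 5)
Your proof is correct and follows essentially the same route as the paper: assume by contradiction that $S_v=J_{v,r}\cup U_{v,r}$, intersect with $T_{v,r}$ to get $T_{v,r}=J_{v,r}\cup(U_{v,r}\cap T_{v,r})$, apply Lemma~\ref{lem:shadow} to conclude $T_{v,r}$ is spherical, and use $U_{v,r}\subset T_{v,r}\cup T_{v,r}^\perp$ to see that $W_{T_{v,r}}$ is a conjugate of a spherical factor containing $r$. The only difference is that you spell out explicitly the step $S_v=T_{v,r}\cup T_{v,r}^\perp$, which the paper leaves implicit.
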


\begin{proof}
Set $T =T_{v,r}$, $J = J_{v,r}$, and $U = U_{v, r}$. Suppose, by
contradiction, that for each vertex $v'$ adjacent to $v$ we have
$d(v', \mathcal W_r) \leq d(v, \mathcal W_r)$. This means that we
have $S_v = J \cup U$. From $J \subset T$ we deduce $T = J \cup (U
\cap T)$. Furthermore, by Lemma~\ref{lem:shadow} the set $J\cup (U
\cap T)$ is spherical. Thus $W_T$ contains $r$ and is conjugate to
a spherical factor of $W$. Contradiction.
\end{proof}

\subsection{Parallel Wall Theorem}

We now discuss the so-called \emph{Parallel Wall Theorem}, first established by Brink and Howlett~\cite[Theorem 2.8]{BH}. The theorem stipulates the existence of a constant $L$ such that for any wall $\mathcal W$ and any vertex $v$ at distance at least $L$ from $\mathcal W$, there is another wall separating $v$ from $\mathcal W$. The following strengthening of this fact is established (implicitly)
in~\cite[Section 5.4]{Cap_AGT}.

\begin{thm}[Strong Parallel Wall Theorem]
\label{parallel wall theorem main} For each $n$ there is a constant $L$ such that for any wall $\mathcal W$ and any vertex $v$ in the Cayley graph $X$ at distance at least $L$ from $\mathcal W$, there are at least $n$ pairwise parallel walls separating $v$ from $\mathcal W$.
\end{thm}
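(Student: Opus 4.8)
The plan is to bootstrap the Strong Parallel Wall Theorem from the ordinary Parallel Wall Theorem by iterating it, and then to arrange that the walls produced are pairwise parallel rather than merely nested. First I would invoke the theorem of Brink and Howlett to obtain a constant $L_0$ such that any vertex at distance at least $L_0$ from a wall is separated from it by some wall. The idea is to apply this repeatedly: given $\mathcal W$ and a vertex $v$ far from it, the first application yields a wall $\mathcal W_1$ strictly between $v$ and $\mathcal W$; then $v$ is still far (by a controlled amount) from $\mathcal W_1$, so a second application yields $\mathcal W_2$ between $v$ and $\mathcal W_1$, and so on. To make this precise I would track distances: if $\mathcal W_1$ separates $v$ from $\mathcal W$, then $d(v, \mathcal W_1) \geq d(v, \mathcal W) - C$ for some universal constant $C$ controlling the "thickness" of the separation step (one can take the separating wall to be at bounded distance from $v$, or argue that each nested wall is crossed, reducing distance to $\mathcal W$ but keeping distance from $v$ large). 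Hence starting from $d(v, \mathcal W) \geq L := L_0 + (n-1)C$ guarantees that all $n$ successive applications remain valid.

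The walls $\mathcal W_1, \dots, \mathcal W_n$ produced this way are automatically nested, since $\mathcal W_{i+1}$ separates $v$ from $\mathcal W_i$, hence separates $v$ from all of $\mathcal W, \mathcal W_1, \dots, \mathcal W_{i-1}$ as well. Nested walls in a Coxeter system are exactly the pairwise parallel ones: two walls either intersect or are parallel, and a chain of walls each separating $v$ from the previous ones cannot contain two intersecting walls, because if $\mathcal W_i$ and $\mathcal W_j$ crossed, the half-space relation needed for the nesting would fail. Thus the nesting produced by the iteration already gives pairwise parallelism, so no extra work is needed to upgrade "nested" to "parallel."

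The main obstacle will be making the distance bookkeeping in the iteration uniform, i.e. establishing the existence of the universal constant $C$ such that passing to the next separating wall costs at most $C$ in distance to $v$. This is where the cited reference~\cite[Section 5.4]{Cap_AGT} does the real work: one needs a quantitative form of the Parallel Wall Theorem in which the separating wall can be taken at bounded distance, or equivalently that the combinatorial convex hull of $v$ and $\mathcal W$ contains uniformly many parallel walls once $d(v,\mathcal W)$ is large. I would therefore phrase the proof as a careful induction on $n$, invoking the ordinary Parallel Wall Theorem at each step with the shifted constant, and citing the explicit estimates of~\cite{Cap_AGT} to control the distance loss; the geometric input that nested equals parallel for walls in a Coxeter complex is then a routine consequence of the structure of half-spaces.
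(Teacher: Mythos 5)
The first thing to note is that the paper itself contains no proof of this theorem: it is imported with the remark that it is established ``(implicitly)'' in \cite{Cap_AGT}*{Section 5.4}. So your proposal has to stand as an independent argument, and it does not, because the step you yourself flag as ``the main obstacle'' is a genuine gap rather than bookkeeping. The inequality $d(v,\mathcal W_1)\geq d(v,\mathcal W)-C$ amounts to saying that the separating wall can be chosen inside the tubular neighbourhood $\mathcal N^X_C(\mathcal W)$, and this is strictly stronger than the Brink--Howlett theorem \cite{BH}, which asserts the existence of \emph{some} separating wall with no control whatsoever on its position. Neither of your parenthetical justifications repairs this. Taking the separating wall ``at bounded distance from $v$'' gives an \emph{upper} bound on $d(v,\mathcal W_1)$, the opposite of what the iteration needs: the very next application of the Parallel Wall Theorem to the pair $(v,\mathcal W_1)$ is then illegitimate and the induction stalls after one step. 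And the claim that a separating wall automatically ``keeps distance from $v$ large'' is false: already in $D_\infty$ (and in any hyperbolic example) the wall dual to the first edge of a geodesic from $v$ to $\mathcal W$ may be parallel to $\mathcal W$, in which case it separates $v$ from $\mathcal W$ while lying at distance $\tfrac12$ from $v$; nothing in the statement of the Parallel Wall Theorem prevents it from handing you exactly this wall. Thus the uniform constant $C$ --- equivalently, a selection rule for separating walls that provably stay in a bounded neighbourhood of $\mathcal W$ --- is the entire content of the strengthening. Deferring it to \cite{Cap_AGT} means your argument, like the paper's, is in substance a citation of that reference rather than a proof.

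The soft half of your sketch is sound, for what it is worth. If one grants the quantitative ingredient (each new separating wall may be chosen inside $\mathcal N^X_C$ of the previous one), then $L=L_0+nC$ works, and your nesting observation is correct: a wall that separates a vertex from a wall (the latter lying entirely in one component of the complement) cannot cross it, and an easy induction shows $\mathcal W_{i+1}$ separates $v$ from all of $\mathcal W,\mathcal W_1,\dots,\mathcal W_{i-1}$, whence pairwise parallelism. One small caution in carrying this out: in this paper ``parallel'' means that the product of the two reflections has infinite order, not that the walls are disjoint subsets of $X$ --- in the Cayley graph \emph{all} distinct walls are disjoint point sets --- but separation in the above sense does exclude crossing, so the conclusion stands. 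To turn your sketch into an actual proof you would need to extract from \cite{Cap_AGT} a statement of the precise form ``for $d(v,\mathcal W)$ large there is a wall separating $v$ from $\mathcal W$ and contained in $\mathcal N^X_C(\mathcal W)$'' and verify that it is really there; since the paper only claims the full theorem is implicit in that reference, this verification, not the iteration scheme, is where the work lies.
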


In order to state a corollary we need to define \emph{tubular neighbourhoods}. Given a metric space $(X,d)$ and a subset $H \subset X$, we denote
$$\mathcal N^X_k(H) = \{ x \in X \; | \; d(x, H) \leq k\}.$$
We call this set the \textbf{$k$-neighbourhood} of $H$. A \textbf{tubular neighbourhood} of $H$ is a $k$-neighbourhood for some $k>0$ (usually we consider only $k\in \N$). We record an immediate consequence of Theorem~\ref{parallel wall theorem main}.

\begin{cor}\label{cor:ParallelWall}
For each $k\in \N$ there is a constant $L$ such that for any wall $\mathcal W$ and any vertex $v$ of $X$ at distance at least $L$ from $\mathcal W$, there is another wall separating $v$ from the tubular neighbourhood  $\mathcal N^X_k(\mathcal W)$.
\end{cor}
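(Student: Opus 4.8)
The plan is to derive the corollary from the Strong Parallel Wall Theorem by applying it with $n = k+2$ and then extracting, from the resulting nested family, a single wall lying at distance greater than $k$ from $\mathcal W$. Let $L$ be the constant produced by Theorem~\ref{parallel wall theorem main} for this value of $n$, and let $v$ be a vertex with $d(v, \mathcal W) \geq L$. We obtain $k+2$ pairwise parallel walls $\mathcal W_1, \dots, \mathcal W_{k+2}$, each separating $v$ from $\mathcal W$.

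First I would record an elementary reduction to a nested picture. Since each $\mathcal W_i$ separates $v$ from $\mathcal W$, the whole of $\mathcal W$ lies in one half-space of $\mathcal W_i$ while $v$ lies in the other; in particular $\mathcal W_i$ does not meet $\mathcal W$, so $\mathcal W_i$ is parallel to $\mathcal W$ as well. Denoting by $A_i$ the half-space of $\mathcal W_i$ containing $\mathcal W$, the sets $A_i$ all contain $\mathcal W$ and all omit $v$; since the $\mathcal W_i$ are pairwise parallel, the standard four-fold classification of the intersection pattern of two half-spaces forces the $A_i$ to be linearly ordered by inclusion. Reindexing, I may assume $A_1 \subsetneq A_2 \subsetneq \dots \subsetneq A_{k+2}$, so that $\mathcal W_{k+2}$ is the outermost wall, i.e. the one closest to $v$.

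The key quantitative step, which I expect to be the main point, is the distance estimate $d(\mathcal W_i, \mathcal W) \geq i-1$. To see it, pick $q \in \mathcal W_i$ and $w \in \mathcal W$ realising $d(\mathcal W_i, \mathcal W)$. For each $j < i$ we have $A_j \subsetneq A_i$ together with $\mathcal W_j \cap \mathcal W_i = \emptyset$, which places the outer wall $\mathcal W_i$ in the half-space of $\mathcal W_j$ opposite to $\mathcal W$; thus $q$ and $w$ lie on opposite sides of $\mathcal W_j$, so $\mathcal W_j$ separates $q$ from $w$. As the $i-1$ distinct walls $\mathcal W_1, \dots, \mathcal W_{i-1}$ each separate $q$ from $w$, any path from $q$ to $w$ must cross each of them, whence $d(q, w) \geq i-1$. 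Applying this with $i = k+2$ gives $d(\mathcal W_{k+2}, \mathcal W) \geq k+1 > k$.

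Finally I would check that a wall $\mathcal W'$ separating $v$ from $\mathcal W$ and satisfying $d(\mathcal W', \mathcal W) > k$ automatically separates $v$ from $\mathcal N^X_k(\mathcal W)$. Indeed $v$ lies in the half-space of $\mathcal W'$ not containing $\mathcal W$, while any $x \in \mathcal N^X_k(\mathcal W)$ lies in the half-space containing $\mathcal W$: otherwise a geodesic from $x$ to a nearest point $w \in \mathcal W$ would cross $\mathcal W'$ at a point $z$ with $d(z, w) \leq d(x, w) \leq k$, contradicting $d(\mathcal W', \mathcal W) > k$. Taking $\mathcal W' = \mathcal W_{k+2}$ then completes the proof. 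The only delicate points are the half-integrality of distances to walls (crossings occur at edge-midpoints), which does not affect the integer inequalities above, and the verification that ``separating $v$ from $\mathcal W$'' indeed means the two lie in opposite half-spaces, which is used in both the nesting reduction and the final step.
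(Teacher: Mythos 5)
Your proof is correct and follows exactly the route the paper intends: the paper records Corollary~\ref{cor:ParallelWall} as an immediate consequence of Theorem~\ref{parallel wall theorem main} without written details, and your argument (apply the theorem with $n=k+2$, nest the resulting pairwise parallel walls, bound the distance of the outermost one from $\mathcal W$ by counting separating walls, and check that a wall at distance $>k$ separating $v$ from $\mathcal W$ also separates $v$ from $\mathcal N^X_k(\mathcal W)$) is precisely the elaboration that is being left to the reader. No gaps; the quantitative details you supply are sound.
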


\subsection{Complements of tubular neighbourhoods of walls}

We need the following result about the complements of tubular neighbourhoods of walls and their intersections.
We denote by $\partial \phi$ the boundary wall of a half-space $\phi$ in the Cayley graph $X$.

\begin{lemma}\label{lem:Fuchsian}
Assume that $(W,S)$ has no spherical factor. Let $k\in \N$.
\begin{enumerate}[(i)]
\item For each half-space $\phi$, the set $\phi \setminus \mathcal
N^X_k(\partial \phi)$ is non-empty.
\item Let $\phi , \phi'$ be a
pair of non-complementary half-spaces whose walls $\partial \phi,
\partial \phi' $ intersect. Then the intersection $(\phi \setminus
\mathcal N^X_k(\partial \phi)) \cap (\phi' \setminus \mathcal
N^X_k(\partial \phi'))$ is also non-empty.
\item Let $\phi,\phi'$
be a pair of non-complementary half-spaces with $\partial
\phi\subset \phi',\ \partial \phi'\subset \phi$, whose associated
pair of reflections $r,r'$ belongs to the Coxeter generating set
$S$. Assume additionally that $\{r,r'\}$ is not an irreducible
factor of $S$. Then the intersection $(\phi \setminus \mathcal
N^X_k(\partial \phi)) \cap (\phi' \setminus \mathcal
N^X_k(\partial \phi'))$ is non-empty.
\end{enumerate}
\end{lemma}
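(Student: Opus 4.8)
All three statements assert that a convex intersection of half-spaces is \emph{thick} --- it contains vertices arbitrarily far from its bounding walls --- and I would prove each by repeatedly pushing a vertex outward, the only possible obstruction being a configuration that forces a spherical factor. For (i), since $(W,S)$ has no spherical factor the reflection $r$ with $\mathcal W_r = \partial\phi$ is not contained in a conjugate of a spherical factor, so Corollary~\ref{lem:SphFactor} applies at every vertex. Starting from a vertex $v_0\in\phi$ adjacent to $\partial\phi$, I would iterate the corollary to obtain $v_0,v_1,v_2,\dots$ with $d(v_{i+1},\partial\phi)>d(v_i,\partial\phi)$. Each step stays inside $\phi$: an edge dual to $\mathcal W_r$ has equidistant endpoints, so a strict increase of the distance to $\partial\phi$ means $v_{i+1}$ lies on the same side as $v_i$. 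As adjacent vertices differ in wall-distance by at most $1$, after more than $k$ steps we land in $\phi\setminus\mathcal N^X_k(\partial\phi)$.

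For (ii) and (iii) I would treat the non-empty convex set $C=\phi\cap\phi'$ (a quadrant of two crossing walls in (ii), the slab between two parallel walls in (iii)) and show $\sup_{v\in C}\min\!\big(d(v,\partial\phi),d(v,\partial\phi')\big)=\infty$, which is exactly the assertion. Arguing by contradiction, I would assume this supremum finite and choose $v\in C$ maximising $\min(d(v,\partial\phi),d(v,\partial\phi'))$. Applying Corollary~\ref{lem:SphFactor} to $r$ and to $r'$ separately produces generators of $S_v$ that push $v$ away from $\mathcal W_r$ and from $\mathcal W_{r'}$ respectively; maximality forces every such outward move to leave $C$, that is, to cross the opposite wall. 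Feeding this into Lemma~\ref{lem:shadow} for both reflections --- controlling $J_{v,r},U_{v,r},T_{v,r}$ and their primed analogues --- I expect to deduce that the joint support, together with the relevant shadow, is spherical and that $S_v$ decomposes as an orthogonal join of this spherical set with its perp.

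In case (ii) this orthogonal splitting exhibits a spherical factor of $W$, contradicting the standing hypothesis that $(W,S)$ has none. In case (iii), where the walls are parallel and $r,r'\in S$, the analysis should instead collapse the joint support to $\{r,r'\}$ and make it a factor, so that $\{r,r'\}$ is an irreducible factor of $S$, contradicting the hypothesis. Here the extra assumption of (iii) enters precisely by furnishing some $t\in S\setminus(\{r,r'\}\cup\{r,r'\}^\perp)$: this is the \emph{sideways} generator, connected to $r$ or $r'$, whose associated moves let a vertex escape the otherwise bounded slab in a direction transverse to the two parallel walls.

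The main obstacle I anticipate is the simultaneous two-wall bookkeeping at the trapped vertex. Corollary~\ref{lem:SphFactor} and Lemma~\ref{lem:shadow} are single-wall statements, and combining them to show that the inability to move outward along either direction forces the joint support $T_{v,r}\cup T_{v,r'}$ to be spherical --- not merely each support separately --- is the delicate point. Carrying this out uniformly in the crossing case (where $\mathcal W_r,\mathcal W_{r'}$ lie in a common spherical residue) and the parallel case (where one must exploit $t$ to enlarge the irreducible component), while tracking the neighbourhood parameter $k$ throughout, is where the real work lies.
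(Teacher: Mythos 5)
Part (i) of your proposal is correct and coincides with the paper's argument: iterate Corollary~\ref{lem:SphFactor}, observing that a step which strictly increases the distance to $\partial\phi$ cannot cross $\partial\phi$ (nor any wall orthogonal to it), so the walk stays in $\phi$ and escapes $\mathcal N^X_k(\partial\phi)$ after $k$ steps.

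For (ii) and (iii), however, what you have written is a plan rather than a proof, and the step you yourself flag as ``where the real work lies'' is exactly the step that fails in the form you state it. Your proposed mechanism is: a vertex of $C=\phi\cap\phi'$ maximising $\min\big(d(\cdot,\partial\phi),d(\cdot,\partial\phi')\big)$, combined with the single-wall statements Corollary~\ref{lem:SphFactor} and Lemma~\ref{lem:shadow}, forces an orthogonal decomposition of $S_v$, hence a spherical factor (resp.\ forces $\{r,r'\}$ to be a factor). As a general principle this is false. In an irreducible affine group, say of type $\tilde A_2$, take two disjoint walls and the slab between them: the slab has bounded width, so every vertex in it is ``trapped''; at each such vertex Corollary~\ref{lem:SphFactor} still provides moves away from either single wall (they simply exit the slab through the opposite wall); and yet there is no spherical factor and the joint support of the two reflections is all of $S$. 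This is precisely the caveat in the remark following the lemma in the paper: the statement of (iii) would be false for arbitrary disjoint walls in affine factors. What saves (ii) and (iii) under their actual hypotheses is that the infinite order of $rr'$ (in (iii)), or the reflection $t$ with $tr$ of infinite order produced via (i) and the Parallel Wall Theorem (in (ii)), forces the relevant rank-$3$ reflection subgroup to be hyperbolic rather than affine, so the walls diverge. Purely local bookkeeping at a trapped vertex cannot detect this affine/hyperbolic dichotomy, since the local data you invoke look the same in both cases. The paper's proof makes the dichotomy the heart of the matter: for non-commuting $r,r'$ it invokes Deodhar's theorem \cite{Deodhar_refl} to see that $\la r,r',t\ra$ is an infinite irreducible rank-$3$ Coxeter group (affine or hyperbolic triangle group) and verifies the claim there; for commuting $r,r'$ it gives a direct two-stage path argument ($2k$ steps away from $\partial\phi$, which by orthogonality stay in $\phi'$, then $k$ steps away from $\partial\phi'$); and in (iii) the hypothesis that $\{r,r'\}$ is not an irreducible factor yields $s\in S$ making $\la r,r',s\ra$ a hyperbolic triangle group, affine being impossible because of the $\infty$ label between $r$ and $r'$. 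To repair your argument you would have to inject this use of the infinite label in an essential way; the maximality argument as sketched cannot close the gap.
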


In assertion (iii) we could relax the hypothesis to allow any
intersecting half-spaces bounded by disjoint walls. But then we
have to additionally assume that the corresponding reflections do
not lie in an affine factor of $W$. We will not need this in the
article.

\begin{proof}
(i) This follows directly from Corollary~\ref{lem:SphFactor}.

(ii) Denote by $r,r'$ the reflections in $\partial\phi,\partial
\phi'$. By (i) and the Parallel Wall Theorem, there is a
reflection $t\in W$ such that $tr$ is of infinite order (for
another argument, see \emph{e.g.} \cite[Proposition~8.1]{Hee}). If
$r$ does not commute with $r'$, then by \cite{Deodhar_refl} the
reflection group $\la r, r', t\ra $ is an infinite irreducible
rank-$3$ Coxeter group (affine or hyperbolic). It remains to
observe that the desired property holds in the special case of
affine and hyperbolic triangle groups.

If $r$ commutes with $r'$, we take a vertex $v$ in $\phi\cap \phi'$. By Corollary~\ref{lem:SphFactor} there is a path of length $2k$ from $v$ to some $v'$ such that each consecutive vertex is farther from $\partial \phi$. Then $d(v',\partial \phi)$ is at least $2k+\frac{1}{2}$. Since  $\partial\phi$ and $\partial \phi'$ are orthogonal, this path stays in $\phi'$. Similarly, there is a path of length $k$ from $v'$ to some $v''$ such that each consecutive vertex is farther from $\partial \phi'$. Then $v''$ lies in $(\phi \setminus \mathcal N^X_k(\partial \phi)) \cap (\phi' \setminus \mathcal N^X_k(\partial \phi'))$.

(iii) Since $\{r,r'\}$ is not an irreducible component of $S$,
there is an element $s\in S$ which does not commute with one of
$r$ and $r'$. Hence the parabolic subgroup $\la r, r', s\ra $ is a
hyperbolic triangle group. As before we observe that the desired
property holds in this special case.
\end{proof}

\subsection{Position of rank-$2$ residues}

We conclude with the discussion of the possible positions of a rank-$2$ residue with respect to a wall.

\begin{lemma}\label{lem:polygon}
Let $r \in W$ be a reflection and let $R \subset X$ be a
residue of rank $2$ containing a vertex $v$. Assume that the vertices $x,y$ adjacent to $v$ in $R$ satisfy $d(v,\mathcal W_r)<d(y,\mathcal W_r)$ and $d(v,\mathcal W_r)\leq d(x,\mathcal W_r)$. Then for any vertex $z$ in $R$ we have
\begin{equation*}
 d(z,\mathcal W_r)= d(v,\mathcal W_r)+
 \begin{cases}
 d(z,v) & \text{if } d(v,\mathcal W_r)< d(x,\mathcal W_r),
\\
d(z,\{v,x\}) & \text{if } d(v,\mathcal W_r)= d(x,\mathcal W_r).
\end{cases}
\end{equation*}
\end{lemma}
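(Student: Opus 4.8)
The plan is to read the conclusion as the assertion that the vertex $v$ (in the first case), respectively the edge $\{v,x\}$ (in the second case), is the \emph{gate} of the wall $\mathcal W_r$ in the residue $R$, and that distances to $\mathcal W_r$ add up through this gate. Recall that $R$, being a rank-$2$ residue, is a convex subcomplex of $X$ isometric to a $2m$-gon, where $m$ is the order of the product of the two canonical generators of $W_R$ (a bi-infinite line when $m=\infty$). I would first record the elementary identity $d(z,\mathcal W_r)=\tfrac12 d(z,r.z)$, so that the function $f\colon z\mapsto d(z,\mathcal W_r)$ takes values in $\tfrac12+\N$ and changes by an element of $\{-1,0,+1\}$ across each edge of $R$. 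The two cases of the displayed formula then correspond precisely to whether the minimum of $f$ on the cycle $R$ is attained at a single vertex or along a single edge.

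The engine of the proof is a monotonicity statement: once $f$ strictly increases across one edge of the polygon, it continues to increase by exactly $1$ across each subsequent edge as one moves away from $v$. I would establish this by a local analysis of three consecutive vertices $z,z',z''$ of $R$: the edges $\{z,z'\}$ and $\{z',z''\}$ are dual to two reflection walls of the dihedral group $W_R$, and the increment of $f$ across each is governed by which half-space of $\mathcal W_r$ the relevant vertices lie in; equivalently, by counting the walls of $R$ that separate a given vertex from $\mathcal W_r$, one sees that each outward step contributes exactly one new separating wall. Feeding in the hypotheses, the $y$-direction starts with a strict increase and hence, by monotonicity, increases by $1$ at every step; in the $x$-direction either the first step already strictly increases, giving $f(z)=f(v)+d(z,v)$ with gate $v$ (first case), or it is flat, i.e. $d(x,\mathcal W_r)=d(v,\mathcal W_r)$, in which case $\{v,x\}$ is the minimal edge and $f(z)=f(v)+d(z,\{v,x\})$ (second case). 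Summing the unit increments along the short arc from $z$ to the gate, and using convexity of $R$ so that this arc is geodesic, yields the formula.

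The main obstacle is global rather than local: the monotonicity must persist all the way around the cycle, i.e. one must exclude that, after increasing for a while, $f$ turns back and decreases as the far part of the polygon approaches $\mathcal W_r$ again. This is exactly the possibility that $\mathcal W_r$ separates the finite polygon $R$ into two arcs, in which case $f$ has two distinct minimal regions and the displayed formula must be read with care. The delicate point is therefore to show that the short arc from any $z$ to the gate carries \emph{all} the walls of $R$ separating $z$ from $\mathcal W_r$, so that no increment is later cancelled; I would settle this from the dihedral geometry of $W_R$ together with the convexity of residues, treating the case $m=\infty$, where $R$ is a line and only one crossing can occur, separately from the finite case. Controlling this no-return phenomenon is where the strict inequality $d(v,\mathcal W_r)<d(y,\mathcal W_r)$ is indispensable, as it is what pins the gate down to the single vertex $v$ or the single edge $\{v,x\}$.
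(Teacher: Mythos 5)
The paper itself gives no proof of this lemma (it is dismissed as ``a simple calculation using root systems'' and omitted), so your argument has to stand on its own; unfortunately it contains a genuine gap, and the gap sits exactly at the point you yourself flag as delicate. Your plan rests on the claim that the stated hypotheses --- in particular the strict inequality $d(v,\mathcal W_r)<d(y,\mathcal W_r)$ --- rule out the ``no-return'' phenomenon and pin down a unique gate. That claim is false; indeed the lemma as literally stated fails without a further assumption. Take $W=\langle s,t\mid s^2=t^2=(st)^3=1\rangle$, so that $X$ is a hexagon and $R=X$ is itself a rank-$2$ residue; take $r=s$, let $v$ be the identity vertex, $x=s.v$, $y=t.v$. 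The wall $\mathcal W_s$ consists of the midpoints of the two antipodal edges $\{v,x\}$ and $\{sts.v,\,ts.v\}$, so $d(v,\mathcal W_s)=d(x,\mathcal W_s)=\tfrac12$ while $d(y,\mathcal W_s)=\tfrac32$: both hypotheses hold, and we are in the ``flat'' case. Yet $d(ts.v,\mathcal W_s)=\tfrac12$, whereas the displayed formula predicts $\tfrac12+d(ts.v,\{v,x\})=\tfrac52$. So when $\mathcal W_r$ crosses $R$, the function $f$ really does turn back, and the strict inequality at $v$ does not prevent this. What is needed --- and what is available in every place the paper applies the lemma (``the residue $R$ lies entirely on one side of $\mathcal W_r$'') --- is the additional hypothesis that $\mathcal W_r$ is disjoint from $R$ (for finite $R$; for $R$ a line a single crossing is harmless, as you note). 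Your proof never isolates or invokes this hypothesis, and no local analysis of consecutive vertices can substitute for it, since the obstruction is invisible near $v$.

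Even granting disjointness of $\mathcal W_r$ from $R$, the engine of your proof remains a black box. The two statements you need --- that apart from possibly the edge $\{v,x\}$ no edge of $R$ is flat (equivalently, no wall crossing $R$ other than the one through $\{v,x\}$ is orthogonal to $\mathcal W_r$; note this is precisely the ``In particular'' remark following the lemma, i.e.\ part of what must be proved), and that the $\pm1$ increments never change sign as one moves away from the gate --- are the entire content of the lemma, and ``the dihedral geometry of $W_R$ together with the convexity of residues'' does not deliver them. Convexity constrains geodesics inside $R$; it says nothing about how the far arc of the polygon sits relative to $\mathcal W_r$, nor does it exclude, say, a finite $R$ with two non-adjacent dual walls both orthogonal to $\mathcal W_r$ (which would again produce two minimal regions compatible with your hypotheses at $v$). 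Ruling such configurations out requires an actual computation with the rank-$\le 3$ reflection subgroup $\langle q_x,q_y,r\rangle$, where $q_x,q_y$ are the reflections dual to the edges at $v$ --- a sign analysis of the associated roots, or a case check through the spherical/affine/hyperbolic trichotomy. This is presumably the omitted root-system calculation the authors allude to, and it is the step your outline asserts rather than proves. To repair the proposal you would need to (1) add and justify the disjointness hypothesis in the form it is used in the paper, and (2) replace the appeal to convexity by the root-system (or reflection-subgroup) argument for unimodality.
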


In particular, no wall orthogonal to $\mathcal W_r$ crosses $R$, except possibly for the one adjacent to $v$ and $x$.

The proof is a simple calculation using \emph{root systems} (see \emph{e.g.} \cite{BH}) and will be omitted.

%%%%%%%%%%%%%%%%%%%%%%%%%%%%%%%%%%%%%%%%%%%%%%%%
%%%%%%%%%%%%%%%%%%%%%%%%%%%%%%%%%%%%%%%%%%%%%%%%
\section{Rigidity of bipolar Coxeter groups}
%%%%%%%%%%%%%%%%%%%%%%%%%%%%%%%%%%%%%%%%%%%%%%%%
%%%%%%%%%%%%%%%%%%%%%%%%%%%%%%%%%%%%%%%%%%%%%%%%
\label{sec:rigidity section}

In this section we define \emph{bipolar} Coxeter groups and prove that this definition agrees with the one given in the Introduction (Lemma~\ref{lem:clarif}). Then we prove that in a bipolar Coxeter group all Coxeter generating sets are \emph{reflection-compatible} (Corollary~\ref{cor:refl}). We conclude with the proof of Theorem~\ref{thm:CoxGenSet}.

\subsection{Bipolar Coxeter groups}
\label{sec:bipolar Coxeter groups}

Let $G$ be a finitely generated group and let $X$ denote the Cayley graph associated with some finite generating set for $G$. We view $X$ as a metric space with the path-metric obtained by giving each edge length~$1$. We identify $G$ with the $0$-skeleton $X^{(0)}$ of $X$. Let $H$ be a subset of $G$.

\begin{defin}
A \textbf{pole} (in $X$) of $G$ relative to $H$ (or of the pair $(G, H)$) is a chain of the form $U_1 \supset U_2 \supset \dots$, where $U_k$ is a non-empty connected component of $X \setminus \mathcal N^X_k(H)$.
\end{defin}

In the appendix, different equivalent definitions of poles as well
as their basic properties will be discussed. Here we merely record
that in Lemma~\ref{lem:quasiisometry} we show that there is a
correspondence between the collections of poles of the pair
$(G,H)$ determined by different generating sets. Hence it makes
sense to consider the number of poles $\tilde e(G,H)$ as an
invariant of the pair $(G,H)$.

We say that the pair $(G, H)$ (or simply the subset $H$ when there
is no ambiguity on what the ambient group is) is
\textbf{$n$-polar} if we have $\tilde e(G, H)= n$. We shall mostly
be interested in the case $n=2$, in which case we say that $H$ is
\textbf{bipolar}. In case $n=1$ we say that $H$ is
\textbf{unipolar}. Notice that $G$ has $n$ ends if and only if the
trivial subgroup is $n$-polar.

\begin{defin}\label{defin:bipolar}
A generator $s$ in some Coxeter generating set $S$ of a Coxeter group $W$ is called \textbf{bipolar} if its centraliser $\centra_W(s)$ is so. The group $W$ is \textbf{bipolar} if it admits a Coxeter generating set all of whose elements are bipolar.
\end{defin}

We now verify that this definition agrees with the one given in the Introduction.

\begin{lemma}\label{lem:clarif}
A generator $s\in S$ is bipolar if and only if $X \setminus
\mathcal N^X_{k}(\mathcal W_s)$ has exactly two connected
components for any $k\in \N$.
\end{lemma}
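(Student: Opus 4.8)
The plan is to pass from the centraliser $\centra_W(s)$ to the wall $\mathcal W_s$, and then to count the connected components of $X\setminus \mathcal N^X_k(\mathcal W_s)$ by exploiting the reflection symmetry of $\mathcal W_s$ together with Corollary~\ref{lem:SphFactor}. The first and least formal step is to show that $\centra_W(s)$ and $\mathcal W_s$ determine the same poles. Since distinct reflections have distinct walls and $g\mathcal W_r=\mathcal W_{grg^{-1}}$, the identity $g\mathcal W_s=\mathcal W_s$ holds exactly when $gsg^{-1}=s$; thus $\centra_W(s)$ is precisely the setwise stabiliser of $\mathcal W_s$. Each $w\in\centra_W(s)$ lies at distance $\tfrac12$ from $\mathcal W_s$, since the edge $\{w,ws\}$ is dual to the reflection $wsw^{-1}=s$; hence $\centra_W(s)\subset \mathcal N^X_1(\mathcal W_s)$. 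Conversely the wall stabiliser acts cocompactly on $\mathcal W_s$ (a standard fact for Coxeter groups), so the vertices adjacent to $\mathcal W_s$ fall into finitely many $\centra_W(s)$-orbits and $\mathcal W_s\subset \mathcal N^X_{D}(\centra_W(s))$ for some integer $D$. The two filtrations $\{\mathcal N^X_k(\centra_W(s))\}_k$ and $\{\mathcal N^X_k(\mathcal W_s)\}_k$ are therefore cofinal in one another, and as in Lemma~\ref{lem:quasiisometry} this yields a bijection between the poles of $(W,\centra_W(s))$ and the chains $U_1\supset U_2\supset\cdots$ with $U_k$ a component of $X\setminus \mathcal N^X_k(\mathcal W_s)$. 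In particular $s$ is bipolar if and only if there are exactly two such chains.

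For $k\ge 1$ every edge joining the two half-spaces $\phi^+,\phi^-$ bounded by $\mathcal W_s$ is a crossing edge, both of whose endpoints lie in $\mathcal N^X_k(\mathcal W_s)$; hence each component of $X\setminus \mathcal N^X_k(\mathcal W_s)$, and each pole, is contained in a single half-space. The reflection $s$ is an isometry fixing $\mathcal W_s$, preserving each $\mathcal N^X_k(\mathcal W_s)$ and interchanging $\phi^+$ and $\phi^-$, so it induces a bijection between the components (and between the poles) on the two sides. Writing $c_k$ for the number of components of $\phi^+\setminus \mathcal N^X_k(\mathcal W_s)$ and $p$ for the number of poles contained in $\phi^+$, we obtain that $X\setminus \mathcal N^X_k(\mathcal W_s)$ has $2c_k$ components and that there are $2p$ poles in total. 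The equivalence to be proved thus becomes: $p=1$ if and only if $c_k=1$ for every $k$.

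It remains to compare $p$ with the $c_k$. If $s$ lies in a spherical irreducible component $C_s$ of $S$, then $W=W_{C_s}\times W_{C_s^\perp}$ with $W_{C_s}$ finite, the distance to $\mathcal W_s$ is bounded by the diameter of the Cayley graph of $W_{C_s}$, and so $\mathcal N^X_k(\mathcal W_s)=X$ for large $k$; then $c_k=0$ eventually and $p=0$, so both conditions fail and the equivalence holds. Otherwise $s$ is contained in no conjugate of a spherical factor, and Corollary~\ref{lem:SphFactor} applies: from any vertex one may pass to an adjacent vertex strictly farther from $\mathcal W_s$, a step that never crosses the wall. Starting inside a component $U$ of $\phi^+\setminus \mathcal N^X_k(\mathcal W_s)$ and iterating, we obtain a path in $U$ along which the distance to $\mathcal W_s$ tends to infinity; hence $c_k\ge 1$, and $U$ contains a component of $\phi^+\setminus \mathcal N^X_{k+1}(\mathcal W_s)$, so no component dies out. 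Consequently each component at level $k$ extends—uniquely upward and, by the no-dying property, downward—to a pole, and components that differ at some level give distinct poles. Therefore $c_k=1$ for all $k$ forces a single chain and $p=1$, while $c_{k_0}\ge 2$ for some $k_0$ forces $p\ge c_{k_0}\ge 2$; this is exactly $p=1\iff c_k=1$ for all $k$.

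The main obstacle is the first step, the coarse identification of $\centra_W(s)$ with $\mathcal W_s$: the inclusion $\centra_W(s)\subset \mathcal N^X_1(\mathcal W_s)$ is immediate, but the reverse bound rests on the cocompactness of the wall stabiliser acting on its wall, and on the fact—elementary, but requiring the bookkeeping of Lemma~\ref{lem:quasiisometry}—that cofinal filtrations define the same number of poles.
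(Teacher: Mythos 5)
Your proposal is correct and follows essentially the same route as the paper: you identify $\centra_W(s)$ coarsely with $\mathcal W_s$ via the wall-stabiliser/cocompactness argument (the paper's Remark~\ref{rem:centraliser is the wall} together with Remark~\ref{rem:olddefin}), use the $s$-symmetry to pair off components and poles on the two sides of the wall, and invoke Corollary~\ref{lem:SphFactor} to rule out components trapped in tubular neighbourhoods. The only cosmetic difference is that you treat the spherical-factor case by hand, where the paper extracts it from Lemma~\ref{lem:Reflection:basic}(ii) inside the converse direction.
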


Before we can give the proof we need the following discussion.
\begin{rem}
\label{rem:centraliser is the wall}
The centraliser $\centra_W(s)$ coincides with the stabiliser of $\mathcal W_s$ in the Cayley graph $X$.
Since the action of $W$ on $X$ has only finitely many orbits of edges, it follows that $\centra_W(s)$ acts cocompactly on the associated wall $\mathcal W_s$. Hence $\mathcal W_s$ is at finite Hausdorff distance in $X$ from $\centra _W(s)\subset X^{(0)}$. Thus, by Remark~\ref{rem:olddefin}, $\tilde e(W, \centra_W(s))$ is equal to the number of poles of $(X,{\mathcal W_s})$ (see Appendix~\ref{App}).
\end{rem}

\begin{lemma}\label{lem:Reflection:basic}
Let $r$ be a reflection in $W$. Then
\begin{enumerate}[(i)]
\item $r$ is not unipolar, \item moreover we have $\tilde e(W,
\centra_W(r)) = 0$ if and only if $r$ belongs to a conjugate of
some spherical factor of $W$.
\end{enumerate}
\end{lemma}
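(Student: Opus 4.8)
The plan is to translate both assertions into statements about the number of poles of the pair $(X, \mathcal W_r)$, which by Remark~\ref{rem:centraliser is the wall} equals $\tilde e(W, \centra_W(r))$. The key geometric input is that the wall $\mathcal W_r$ separates $X$ into two half-spaces $\phi^+$ and $\phi^-$, and that for every $k \geq 1$ any edge-path crossing $\mathcal W_r$ must pass through a vertex at distance $\tfrac12$ from the wall, hence through $\mathcal N^X_k(\mathcal W_r)$. Consequently each connected component of $X \setminus \mathcal N^X_k(\mathcal W_r)$ is contained in a single half-space, every pole is eventually housed in one of $\phi^+, \phi^-$, and two poles lying in different half-spaces are automatically distinct. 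This reduces the entire lemma to locating poles inside each half-space.

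First I would dispose of the case where $r$ lies in a conjugate of a spherical factor. After conjugating we may assume $r \in W_J$ with $S = J \cup J^\perp$ and $W_J$ finite, so that $X$ decomposes as a product in which the $W_J$-direction has finite diameter $D$ while $\mathcal W_r$ spans the entire $W_{J^\perp}$-direction. Then every vertex lies within distance $D$ of $\mathcal W_r$, so $X \setminus \mathcal N^X_k(\mathcal W_r)$ is empty for $k \geq D$ and there are no poles; thus $\tilde e(W, \centra_W(r)) = 0$. This settles the ``if'' direction of (ii) and shows $\tilde e(W, \centra_W(r)) \neq 1$ in this case.

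In the complementary case, where $r$ is not contained in any conjugate of a spherical factor, I would invoke Corollary~\ref{lem:SphFactor}: from any vertex $v$ one passes to an adjacent vertex strictly farther from $\mathcal W_r$, and iterating produces an infinite ray $v = v_0, v_1, v_2, \dots$ along which $d(v_i, \mathcal W_r)$ strictly increases, and therefore tends to infinity. Since the distance to $\mathcal W_r$ never decreases along this ray, it never crosses the wall and so remains in the half-space containing $v$; its tail eventually lies in $X \setminus \mathcal N^X_k(\mathcal W_r)$ for every $k$, and the components containing these tails form a nested chain, i.e.\ a pole housed in that half-space. Running this construction once with $v \in \phi^+$ and once with $v \in \phi^-$ produces two poles in distinct half-spaces, whence $\tilde e(W, \centra_W(r)) \geq 2$. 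In particular $\tilde e(W, \centra_W(r)) \neq 1$ and $\tilde e(W, \centra_W(r)) \neq 0$, which proves (i) and, combined with the previous paragraph, also completes (ii).

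The two verifications requiring care are routine: that the escaping ray genuinely defines a pole (one checks that beyond the first index with $d(v_i, \mathcal W_r) > k$ the tail lies in a single non-empty component of $X \setminus \mathcal N^X_k(\mathcal W_r)$, and that these components nest as $k$ grows), and the product-distance bound in the spherical-factor case. I expect the conceptual crux to be the separation statement of the opening paragraph, namely that deep parts of the two half-spaces cannot be joined outside $\mathcal N^X_k(\mathcal W_r)$, since it is precisely what forces the two rays to generate genuinely different poles and hence to push $\tilde e(W, \centra_W(r))$ up to at least $2$ rather than merely $1$.
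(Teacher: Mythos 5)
Your proof is correct, and for part (i) it takes a genuinely different route from the paper. The paper disposes of (i) in one line by a symmetry argument: $r$ permutes the poles of $(X,\mathcal W_r)$ and acts non-trivially on the two sides of its wall, so the poles come in pairs and their number is even or infinite --- in particular never $1$, with no case distinction needed. You instead prove the dichotomy: either $r$ lies in a conjugate of a spherical factor, in which case every vertex is uniformly close to $\mathcal W_r$ and there are no poles at all, or else Corollary~\ref{lem:SphFactor} lets you build an escaping ray on \emph{each} side of the wall, and since no component of $X\setminus \mathcal N^X_k(\mathcal W_r)$ can meet both half-spaces, this yields at least two distinct poles. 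Your treatment of (ii) matches the paper's (finite index of $\centra_W(r)$ in one direction, Corollary~\ref{lem:SphFactor} in the other), but your version is more careful on a point the paper leaves implicit: the paper concludes $\tilde e(W,\centra_W(r))\neq 0$ merely from the fact that $X$ is not covered by any tubular neighbourhood of $\mathcal W_r$, whereas in general the non-emptiness of all the sets $X\setminus \mathcal N^X_k(\mathcal W_r)$ does not by itself produce a nested chain of non-empty components (think of a space with arbitrarily long but finite ``teeth'' sticking out of $H$); one really needs the escaping ray from Corollary~\ref{lem:SphFactor}, which is exactly what your explicit construction supplies. The trade-off is that the paper's parity argument is shorter and uniform, while your argument gives more: it localises one pole deep in each half-space, which is precisely the picture exploited later in the characterisation of bipolar reflections.
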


\begin{proof}
(i) By Remark~\ref{rem:centraliser is the wall} we need to study
the poles of $(X,{\mathcal W_r})$. Since $r$ acts non-trivially on
the two components of $X \setminus \mathcal W_r$ it follows that
the number of poles of $(X, \mathcal W_r)$ is even (or infinite).

(ii) If $r$ belongs to a conjugate of some spherical factor of
$W$, then $\centra_W(r)$ has finite index in $W$ and hence  we
have $\tilde e(W, \centra_W(r))=0$. Conversely, assume that $s$
does not belong to a conjugate of any spherical factor of $W$.
Then Corollary~\ref{lem:SphFactor} ensures that $X$ does not
coincide with any tubular neighbourhood of $\mathcal W_r$, hence
we have $\tilde e(W, \centra_W(r)) \neq 0$.
\end{proof}

We are now prepared for the following.

\begin{proof}[Proof of Lemma~\ref{lem:clarif}]
First assume that $X \setminus \mathcal N^X_{k}(\mathcal W_s)$ has
exactly two connected components for any $k\in \N$. Since these
components are interchanged under the action of $s$, they are
either both contained or neither of them is contained in a tubular
neighbourhood of $\mathcal W_s$. In fact, since the hypothesis is
satisfied for every $k$, neither of them is contained in a tubular
neighbourhood of $\mathcal W_s$. Hence they determine the only two
poles of $(X,\mathcal W_s)$.
Then $s$ is bipolar by Remark~\ref{rem:centraliser is the wall}.

For the converse, let $s$ be bipolar. Like before, by Remark~\ref{rem:centraliser is the wall} the pair $(X,\mathcal W_s)$ has exactly two poles. Hence each $X \setminus \mathcal N^X_{k}(\mathcal W_s)$ has at least one connected component not contained in any tubular neighbourhood of $\mathcal W_s$. In fact, since this component is not $s$-invariant, there are at least two such connected components of $X \setminus \mathcal N^X_{k}(\mathcal W_s)$. Since the number of poles of $(X,\mathcal W_s)$ equals two, all other possible connected components of $X \setminus \mathcal N^X_{k}(\mathcal W_s)$ must be contained in some tubular neighbourhood of $\mathcal W_s$. It remains to exclude the existence of these components.

It suffices to prove that any vertex $v$ of $X$ is adjacent to
some vertex $v'$ which is farther from $\mathcal W_s$. By
Lemma~\ref{lem:Reflection:basic}(ii), the reflection $s$ is not
contained in a conjugate of any spherical factor of $W$. Therefore
the desired statement follows from Corollary~\ref{lem:SphFactor}.
\end{proof}

\subsection{Reflections}

In this section we show that in a bipolar Coxeter group the notion of a \emph{reflection} is independent of the choice of a Coxeter generating set (Corollary~\ref{cor:refl}). It follows that all elements of all Coxeter generating sets are bipolar.

\begin{prop}\label{prop:refl}
Let $S$ be a Coxeter generating set for $W$ all of whose elements are bipolar. Then any involution of
$W$ which is not a reflection is unipolar.
\end{prop}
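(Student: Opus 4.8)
The plan is to compute the number of poles of the pair $(W,\centra_W(g))$ and show that it equals $1$. As in Remark~\ref{rem:centraliser is the wall}, the centraliser $\centra_W(g)$ is the stabiliser of the fixed-point set $F := \mathrm{Fix}_\A(g)$ and acts on it cocompactly, so $\centra_W(g)$ lies at finite Hausdorff distance from $F$ and $\tilde e(W,\centra_W(g))$ equals the number of poles of $(X,F)$. Since every element of $S$ is bipolar, Lemma~\ref{lem:Reflection:basic}(ii) forbids $W$ from having a non-trivial spherical factor, so Corollary~\ref{lem:SphFactor} is available for every reflection.

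First I would reduce to a normal form. Every involution is conjugate to the longest element $w_J$ of a spherical standard parabolic subgroup $W_J$ with $w_J$ central in $W_J$, equivalently acting as $-1$ on the reflection representation of $W_J$; conjugating does not change the number of poles (Lemma~\ref{lem:quasiisometry}), so I may assume $g=w_J=-1\in W_J$. Because $g$ is not a reflection, $|J|\ge 2$. The key structural input is then the identification
\[
F \;=\; \mathrm{Fix}_\A(g)\;=\;\mathrm{Fix}_\A(W_J)\;=\;\bigcap_{r\in W_J}\mathcal W^\A_r ,
\]
which holds since $-1$ and $W_J$ have the same fixed set on the reflection representation (namely the origin), the remaining directions being governed by $J^\perp$. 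In particular, choosing any two distinct reflections $r_1,r_2\in W_J$, their walls intersect (they lie in the finite group $W_J$) and $F\subset \mathcal W^\A_{r_1}\cap\mathcal W^\A_{r_2}$. Thus $F$ is a convex subcomplex of codimension at least $2$, contained in the codimension-$2$ ``spine'' of two crossing walls whose two sides are interchanged by $g$.

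To see $\tilde e(W,\centra_W(g))\ge 1$, note that since $r_1$ is bipolar, $X\setminus\mathcal N^X_k(\mathcal W_{r_1})$ has a component not contained in any tubular neighbourhood of $\mathcal W_{r_1}$; as $F\subset\mathcal W_{r_1}$, this component is also not contained in any tubular neighbourhood of $F$, producing at least one pole. For the upper bound $\tilde e(W,\centra_W(g))\le 1$, I would show that $X\setminus\mathcal N^X_k(F)$ has a unique component not contained in a tubular neighbourhood of $F$, for every $k$. The two walls $\mathcal W_{r_1},\mathcal W_{r_2}$ divide $X$ into four quadrants; by Lemma~\ref{lem:Fuchsian}(ii) each quadrant contains points arbitrarily far from both walls, hence, as $F$ lies in the spine, arbitrarily far from $F$. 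Two adjacent quadrants can be joined by a path crossing the shared wall at a point far from the other wall, hence far from $F\subset\mathcal W^\A_{r_1}\cap\mathcal W^\A_{r_2}$; such a path stays outside $\mathcal N^X_k(F)$. Running around the four quadrants shows they all lie in a single component of $X\setminus\mathcal N^X_k(F)$. Finally, any vertex far from $F$ but close to one of the walls is far from the spine, so Corollary~\ref{lem:SphFactor} lets me push it off that wall without approaching $F$, connecting it to a quadrant; hence every deep vertex lies in this single component. This gives exactly one pole, so $g$ is unipolar.

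The main obstacle is the structural step: establishing that $\centra_W(g)$ is coarsely the fixed set $F$ and, crucially, that $F$ is contained (coarsely) in the intersection of two crossing walls, so of codimension at least $2$. Once $g$ is put in the form $-1\in W_J$ with $|J|\ge 2$, the equality $\mathrm{Fix}_\A(g)=\bigcap_{r\in W_J}\mathcal W^\A_r$ is what powers the whole argument: it is exactly this codimension-$2$ (rather than codimension-$1$) nature of the fixed set that collapses the four quadrants into a single pole, in contrast with the two poles produced by a genuine reflection. The remaining care is combinatorial bookkeeping, namely verifying via Lemma~\ref{lem:polygon} and Corollary~\ref{lem:SphFactor} that the rerouting paths genuinely avoid $\mathcal N^X_k(F)$.
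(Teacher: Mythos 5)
Your skeleton resembles the paper's proof more than it differs from it: both identify $\centra_W(g)$ coarsely with the fixed set of the minimal finite parabolic containing $g$ (a coarse intersection of walls), and both connect deep regions using bipolarity of individual reflections together with Lemma~\ref{lem:Fuchsian}(ii). But your final connecting step has a genuine gap, located exactly at what you call the crux. You work with only \emph{two} reflections $r_1,r_2\in W_J$ and claim that ``any vertex far from $F$ but close to one of the walls is far from the spine''. This is false whenever $\la r_1,r_2\ra$ is a proper subgroup of $W_J$, in particular whenever $|J|\geq 3$. For instance, if $W_J\cong A_1\times A_1\times A_1$ (take $W$ of type $\widetilde C_3$, which is bipolar by Corollary~\ref{cor:2sph}, and $g$ the product of three pairwise commuting generators), then $F=\bigcap_{r\in W_J}\mathcal W^\A_r$ is a point-like set of codimension $3$, while $\mathcal W^\A_{r_1}\cap\mathcal W^\A_{r_2}$ is a line-like set of codimension $2$: there are vertices close to \emph{both} chosen walls (hence inside your spine and in none of the four deep quadrants) which are arbitrarily far from $F$, because they are far from the wall of the \emph{third} reflection of $W_J$. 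Your case analysis never reaches such vertices, and the stated reason they do not occur is wrong. The paper's proof is organized precisely to avoid this: it takes \emph{all} walls $\mathcal W_1,\dots,\mathcal W_n$ of reflections of $P$, proves the sandwich $\mathcal N^X_{k-d}(X^r)\subset\bigcap_i\mathcal N^X_k(\mathcal W_i)\subset\mathcal N^X_K(X^r)$ (Lemma~\ref{lem:Tubular}), so that ``far from the centraliser'' means ``far from at least one of the $\mathcal W_i$'', and then writes the complement as $\bigcup_{\phi\in\Phi}\bigl(\phi\setminus\mathcal N^X_k(\partial\phi)\bigr)$ over all half-spaces bounded by any $\mathcal W_i$; each piece is connected by bipolarity, the pieces overlap pairwise by Lemma~\ref{lem:Fuchsian}(ii), and $n>1$ makes the union connected.

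Two smaller points. First, your pushing step via Corollary~\ref{lem:SphFactor} is also glossed: moving monotonically away from $\mathcal W_{r_1}$ can decrease the distance to $F$ by one per step, so the resulting path only stays outside roughly $\mathcal N^X_{(k-d)/2}(F)$ rather than outside $\mathcal N^X_k(F)$; that loss is harmless for counting poles but must be tracked, and to actually land in a deep quadrant one must then push off the second wall as well, losing another factor. (Carried out carefully for \emph{every} vertex far from $F$ — including the spine vertices above — this two-stage push would in fact repair your argument while keeping only two walls; but as written both the dichotomy and the path construction are incorrect or incomplete.) Second, your normal-form reduction (every involution is conjugate to a central longest element $w_J$ of a spherical parabolic) is an external theorem of Richardson which the paper does not need: it simply takes $P$ to be the minimal parabolic containing the involution, for which $\mathrm{Fix}_\A(g)=\A^P$ holds automatically. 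That substitution is harmless but unnecessary.
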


Proposition~\ref{prop:refl} is related to \cite[Corollary~2]{Klein} which asserts that, in an arbitrary finitely generated group, an infinite index subgroup of an $n$-polar subgroup is necessarily unipolar.

Before we provide the proof, we deduce the following corollary. We say that two Coxeter generating sets $S_1$ and $S_2$ for $W$ are \textbf{reflection-compatible} if every element of $S_1$ is conjugate to an element of $S_2$. This defines an equivalence relation on the collection of all Coxeter generating sets (see \cite[Corollary~A.2]{TwistRigid}).

\begin{cor}\label{cor:refl}
In a bipolar Coxeter group any two Coxeter generating sets are reflection-compatible.
In particular any Coxeter generating set consists of bipolar elements.
\end{cor}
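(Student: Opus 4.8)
The plan is to fix the distinguished generating set $S$ furnished by the hypothesis (all of whose elements are bipolar) and to show that every element of an arbitrary Coxeter generating set $S'$ is conjugate to an element of $S$. This is precisely the statement that $S'$ and $S$ are reflection-compatible; since reflection-compatibility is an equivalence relation, symmetry and transitivity will then deliver the reflection-compatibility of any two Coxeter generating sets.

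The heart of the argument is a polarity dichotomy obtained by playing two earlier results against each other. On the one hand, fix $s' \in S'$. As $s'$ is (trivially, being a conjugate of itself) a reflection with respect to $S'$, Lemma~\ref{lem:Reflection:basic}(i) applied to the pair $(W, S')$ shows that $s'$ is not unipolar. Here I use that the invariant $\tilde e(W, \centra_W(s'))$ is intrinsic, that is, independent of the chosen Coxeter generating set — exactly the content of Lemma~\ref{lem:quasiisometry} — so that it is legitimate to compute it in the Cayley graph of $(W, S')$. On the other hand, Proposition~\ref{prop:refl} applied to the bipolar set $S$ asserts that every involution of $W$ which is not a reflection with respect to $S$ is unipolar. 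Taking the contrapositive, any involution which is \emph{not} unipolar must be a reflection with respect to $S$, i.e. conjugate to an element of $S$. Combining the two observations, the involution $s'$, being not unipolar, is conjugate to an element of $S$, as desired.

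Once this holds for every $s' \in S'$, the first assertion follows: given arbitrary Coxeter generating sets $S_1, S_2$, each is reflection-compatible with $S$ by the above, whence $S_1$ and $S_2$ are reflection-compatible by transitivity. For the final sentence, let $s'$ be an element of any Coxeter generating set; it is conjugate to some $s \in S$, say $s' = w s w^{-1}$. Then $\centra_W(s') = w\,\centra_W(s)\,w^{-1}$, and since the inner automorphism $g \mapsto w g w^{-1}$ is a self-quasi-isometry of $W$ carrying $\centra_W(s)$ to $\centra_W(s')$, the quasi-isometry invariance of $\tilde e$ gives $\tilde e(W, \centra_W(s')) = \tilde e(W, \centra_W(s)) = 2$. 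Hence $s'$ is bipolar.

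The main obstacle — indeed essentially the only delicate point — is the bookkeeping between the two generating sets $S$ and $S'$, namely keeping track of which notions depend on the generating set and which do not. The words \emph{reflection} and \emph{unipolar} are a priori tied to a choice of generating set through the Cayley graph and its walls; the argument works precisely because unipolarity, being encoded by the intrinsic subgroup $\centra_W(s')$ together with the quasi-isometry invariant $\tilde e$, is in fact independent of that choice, whereas ``reflection with respect to $S$'' is exactly the relation we are trying to establish. Making this distinction explicit, and invoking Lemma~\ref{lem:quasiisometry} to transport the computation of $\tilde e$ between the two Cayley graphs, is what renders the short argument rigorous.
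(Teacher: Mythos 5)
Your proof is correct and follows essentially the same route as the paper: you play Lemma~\ref{lem:Reflection:basic}(i) (a reflection is never unipolar, with $\tilde e$ computed intrinsically thanks to Lemma~\ref{lem:quasiisometry}) against the contrapositive of Proposition~\ref{prop:refl} to conclude that every element of an arbitrary Coxeter generating set is a reflection with respect to the given bipolar generating set. Your explicit verification of the last sentence via conjugation-invariance of $\tilde e$, and your appeal to reflection-compatibility being an equivalence relation, merely spell out steps the paper leaves implicit.
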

\begin{proof}
By hypothesis there is some Coxeter generating set $S_1 \subset W$
consisting of bipolar elements. Let $r$ belong to an other Coxeter
generating set $S_2$. By Lemma~\ref{lem:Reflection:basic}(i)
$\centra_W(r)$ is not unipolar. Hence by
Proposition~\ref{prop:refl} the involution
$r$ is a reflection with respect to $S_1$.
\end{proof}

In order to prove Proposition~\ref{prop:refl} we need the
following subsidiary result. Let $d$ denote the maximal diameter
of a spherical residue in $X$.

\begin{lemma}\label{lem:Tubular}
Let $\mathcal W_1, \dots, \mathcal W_n$ be the walls associated to
the reflections of some finite parabolic subgroup $P < W$. Then
for each $k\in \N$ there is some $K\in \N$ satisfying
$$\bigcap_{i=1}^n \mathcal N^X_k(\mathcal W_i) \subset
\mathcal N^X_{K}\big(\bigcap_{i=1}^n \mathcal N^X _d(\mathcal
W_i)\big).$$
\end{lemma}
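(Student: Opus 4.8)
The plan is to reduce everything to the combinatorial projection onto the residue of $P$, and then to absorb $x$ into $C:=\bigcap_i \mathcal N^X_d(\mathcal W_i)$ by a single projection onto an auxiliary convex set that ``peels off'' exactly the walls parallel to one of the $\mathcal W_i$. Let $R\subset X$ be the residue of $P$. Since $P$ is finite, $R$ is a spherical residue, so it has diameter at most $d$, and the walls $\mathcal W_1,\dots,\mathcal W_n$ are precisely the walls crossing $R$; consequently every vertex of $R$ lies within $d$ of each $\mathcal W_i$, whence $R\subset C$. Fix a vertex $x\in\bigcap_i\mathcal N^X_k(\mathcal W_i)$; the goal is to produce $x^*\in C$ with $d(x,x^*)$ bounded independently of $x$. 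Let $p=\pi_R(x)$ be the combinatorial projection of $x$ onto $R$ and let $M$ be the set of walls separating $x$ from $R$. By the gate property of residues these are exactly the walls crossed by a geodesic $[x,p]$, none of them crosses $R$, so $M$ is disjoint from $\{\mathcal W_1,\dots,\mathcal W_n\}$ and $x,p$ lie on the same side of every $\mathcal W_i$.

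Partition $M=M_a\sqcup M_b$, where $M_a$ consists of the walls of $M$ parallel to at least one $\mathcal W_i$ and $M_b$ of those crossing every $\mathcal W_i$. I first bound $|M_a|$. Fix $i$ and a vertex $w_i$ adjacent to $\mathcal W_i$ with $d(x,w_i)\le k$. If $\mathcal V\in M$ is parallel to $\mathcal W_i$, then $\mathcal V$ does not cross $R$ (otherwise $\mathcal V$ would be one of the pairwise-crossing walls $\mathcal W_j$), so $R$, and with it the whole wall $\mathcal W_i$ and in particular $w_i$, lies on the side of $\mathcal V$ away from $x$; thus $\mathcal V$ separates $x$ from $w_i$. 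Hence there are at most $d(x,w_i)\le k$ walls of $M$ parallel to $\mathcal W_i$, and summing over $i$ yields $|M_a|\le nk$.

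Now let $H_a=\bigcap_{\mathcal V\in M_a}\phi_{\mathcal V}$, where $\phi_{\mathcal V}$ denotes the half-space of $\mathcal V$ containing $R$; this is convex and contains $R$. Set $x^*=\pi_{H_a}(x)$. I claim $x^*\in C$, and in fact that $x^*$ is adjacent to each $\mathcal W_i$. Suppose instead that some wall $\mathcal V$ parallel to $\mathcal W_i$ separated $x^*$ from $\mathcal W_i$. As above $\mathcal V$ does not cross $R$, so $R$ (hence part of $H_a$) lies on the side of $\mathcal V$ away from $x^*$. If $\mathcal V\in M_a$ then $H_a\subset\phi_{\mathcal V}$ would force $x^*$ onto that same far side, a contradiction; so $\mathcal V\notin M_a$, and being parallel to $\mathcal W_i$ it then lies outside $M$ altogether, i.e.\ $x$ lies on the same ($R$-)side of $\mathcal V$ as $R$. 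Thus $\mathcal V$ separates $x$ from $x^*$ while meeting both sides of $H_a$ — contradicting that the gate geodesic $[x,x^*]$ crosses only walls having all of $H_a$ on the side away from $x$. Therefore no wall parallel to $\mathcal W_i$ separates $x^*$ from $\mathcal W_i$, so $d(x^*,\mathcal W_i)=\tfrac12$ for every $i$ and $x^*\in\bigcap_i\mathcal N^X_{1/2}(\mathcal W_i)\subset C$.

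It remains to bound $d(x,x^*)$, and this is the step I expect to be the main obstacle. The gate geodesic $[x,x^*]$ crosses only walls separating $x$ from $H_a$, each of which lies in $M$; so it suffices to show that no wall of $M_b$ separates $x$ from $H_a$, which would give $d(x,x^*)\le|M_a|\le nk$ and hence the lemma with $K=nk$. The heuristic is that $H_a$ is cut out solely by walls parallel to the $\mathcal W_i$, so it is unbounded transverse to each such family and, since a wall $\mathcal V\in M_b$ crosses every $\mathcal W_i$, the set $H_a$ reaches along the $\mathcal W_i$ past $\mathcal V$ and meets both of its sides. Turning this divergence picture into a proof — controlling, for each $i$, how far $H_a$ extends along $\mathcal W_i$ relative to the walls of $M_b$, and in particular handling the interaction between the different parallel families — is the technical heart of the argument, and is precisely where the position-of-residue computation of Lemma~\ref{lem:polygon} (equivalently, the root-system fact that only walls parallel to $\mathcal W_i$ contribute to $d(\,\cdot\,,\mathcal W_i)$) comes into play.
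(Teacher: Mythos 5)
Your steps 1 and 2 are essentially sound: the count $|M_a|\le nk$ and the fact that the gate $x^*$ of $x$ in $H_a$ satisfies $d(x^*,\mathcal W_i)=\tfrac12$ for every $i$ both follow from standard gate/wall-counting arguments of the kind you invoke. But the proof stops exactly where the real content begins, and you say so yourself: you never prove that no wall of $M_b$ separates $x$ from $H_a$, and without that there is no bound on $d(x,x^*)$ and hence no proof. This missing step is not a routine verification. What you need is a Helly-type statement for half-spaces of a Coxeter group --- the $x$-side of a wall $\mathcal V\in M_b$ meets each $\phi_{\mathcal V'}$, $\mathcal V'\in M_a$, and these all pairwise meet, so you want the total intersection to be non-empty --- and such statements are \emph{false} in general: in a Euclidean or hyperbolic triangle group, the three ``outer'' half-planes of a triangle of pairwise-crossing walls pairwise intersect but have empty common intersection. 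So any correct argument must genuinely exploit that each $\mathcal V'\in M_a$ is disjoint from some $\mathcal W_i$ while $\mathcal V$ crosses \emph{all} of them, i.e.\ it must quantify how far $H_a$ extends along each $\mathcal W_i$; this is essentially the same difficulty as the lemma itself, so there is a real risk of circularity. Your ``divergence'' heuristic restates the desired conclusion rather than proving it, and Lemma~\ref{lem:polygon}, which controls a single rank-$2$ residue relative to one wall, is far too local to deliver such a global separation statement.

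For comparison, the paper sidesteps this combinatorial difficulty entirely by a soft compactness argument. It works in the Davis complex $\A$ (quasi-isometric to $X$), where $\bigcap_i\mathcal W^\A_i$ equals the non-empty fixed-point set $\A^P$, on which $\centra_W(P)$ acts cocompactly. If the lemma failed, there would be a sequence $(x_j)$ in $\bigcap_i\mathcal N^\A_k(\mathcal W^\A_i)$ leaving every tubular neighbourhood of $\A^P$; after translating by $\centra_W(P)$ one may assume the projections of the $x_j$ to $\A^P$ stay bounded, and then an accumulation point $\xi\in\bd\A$ lies in $\bd(\mathcal W^\A_i)$ for every $i$, so the ray $[o,\xi)$ issuing from $o\in\A^P$ is contained in each closed convex wall $\mathcal W^\A_i$, hence in $\A^P$ --- contradicting that $[o,\xi)$ leaves every tubular neighbourhood of $\A^P$. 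Note that your approach, if the $M_b$ step could be completed, would yield the explicit bound $K=nk$, which the paper's non-effective compactness argument does not; but as written the proposal has a genuine gap at its central step.
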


We need to consider the intersection of $\mathcal N^X_d(\mathcal
W_i)$ instead of the intersection of the walls $\mathcal W_i$
themselves because in the Cayley graph the intersection
$\bigcap_{i=1}^n \mathcal W_i$ is usually empty. On the other
hand, the intersection of $\mathcal N^X_d(\mathcal W_i)$ is
non-empty since it contains all the residues 
%PE replaced "of $P$" by 
whose stabiliser is $P$.

\begin{proof}
It is convenient here to work with the Davis complex $\mathbb A$.
The complex $\A$ equipped with its path-metric is quasi-isometric
to the Cayley graph $X$. In the language of the Davis complex, we
need to show that for each $k\in \N$, there is some $K\in \N$
satisfying
$$\bigcap_{i=1}^n \mathcal{N}^\A_k(\mathcal{W}^\A_i)
\subset \mathcal{N}^\A_K(\bigcap_{i=1}^n\mathcal{W} ^\A_i).$$

The above intersection $\bigcap_{i=1}^n \mathcal W^\A_i$ equals to
the fixed-point set $\A^P$ of $P$ in $\A$. The centraliser
$\centra_W(P)$ acts cocompactly on $\mathbb A^P$.

Assume for a contradiction that there is some sequence $(x_j)$
contained in $\bigcap_{i=1}^n \mathcal N^\A_k(\mathcal W^\A_i)$
but leaving every tubular neighbourhood of $\mathbb A^P$. After
possibly translating the $x_j$ by the elements of $\centra_W(P)$,
we may assume that the set of orthogonal projections of the $x_j$
onto $\mathbb A^P$ is bounded. Let $\xi \in \partial_\infty
\mathbb A$ be an accumulation point of $(x_j)$. If we pick a
basepoint $o$ in $\mathbb A^P$, then the geodesic ray $[o, \xi)$
leaves every tubular neighbourhood of $\mathbb A^P$. On the other
hand, by assumption we have $x_j \in \mathcal N^\A_k (\mathcal
W^\A_i)$. This implies $\xi \in \partial_\infty(\mathcal W^\A_i)$
and hence $[o, \xi) \subset \mathcal W^\A_i$ for each $i$. Thus we
have $[o, \xi) \subset \bigcap_{i=1}^n  \mathcal W^\A_i = \mathbb
A^P$, contradiction.
\end{proof}

We are now ready for the following.

\begin{proof}[Proof of Proposition~\ref{prop:refl}]
Let $r \in W$ be an involution which is not a reflection. We need to
show that $\centra_W(r)$ is unipolar.

Let $P$ be the minimal parabolic subgroup containing $r$ and let
$\mathcal W_1, \dots, \mathcal W_n$ be the walls corresponding to
all the reflections in $P$. The centraliser of $r$ acts
cocompactly on $\bigcap_{i=1}^n \mathcal N^X_d(\mathcal W_i)$
which we will denote by $X^r$. Hence $X^r$ is at finite Hausdorff
distance from $\centra_W(r)\subset X^{(0)}$. Therefore, in view of
Remark~\ref{rem:olddefin}, it suffices to show that $(X,X^r)$ has
only one pole.

By Lemma~\ref{lem:Tubular} for each $k\in \N$ there exists $K\in
\N$ satisfying
$$\mathcal N^X_{k-d}(X^r) \subset \bigcap_{i=1}^n
\mathcal N^X_k(\mathcal W_i) \subset \mathcal N^X_K(X^r).$$ Hence
it suffices to prove that for each $k\in \N$ the set
\begin{equation}
\label{set}
X\setminus \big(\bigcap_{i=1}^n \mathcal N^X_k(\mathcal W_i)\big)
\end{equation}
is connected. If we denote by $\Phi$ the set of all half-spaces
bounded by $\mathcal W_i$ for some $i$, the set displayed
in~(\ref{set}) is equal to $$\bigcup_{\phi  \in \Phi} \phi
\setminus \mathcal N^X_k(\partial \phi).$$

Since $W$ is bipolar, the set $\phi \setminus \mathcal
N^X_k(\partial \phi)$ is connected for each $\phi \in \Phi$.
Moreover, by Lemma~\ref{lem:Reflection:basic}(ii), $W$ has no
spherical factor. Therefore, by Lemma~\ref{lem:Fuchsian}(ii) the
intersection $\phi \setminus \mathcal N^X_k(\partial \phi) \cap
\phi' \setminus \mathcal N^X_k(\partial \phi')$ is non-empty for
any two non-complementary half-spaces $\phi, \phi' \in \Phi$.
Finally, since $r$ is not a reflection, we have $n>1$ and hence
$\Phi$ does not consist of a single pair of complementary
half-spaces.

Hence $\bigcup_{\phi  \in \Phi}  \phi \setminus \mathcal N^X_k(\partial \phi)$ is connected, $(X,X^r)$ has only one pole, and $r$ is unipolar, as desired.
\end{proof}

\subsection{Rigidity}

Finally, we prove our rigidity result.

\begin{proof}[Proof of Theorem~\ref{thm:CoxGenSet}]
Let $W$ be a bipolar Coxeter group and let $S_1$ and $S_2$ be two Coxeter generating sets for $W$. By Corollary~\ref{cor:refl}, the sets $S_1$ and $S_2$ are reflection-compatible; moreover, both of them consist of bipolar elements.

Let $X_i$ be the Cayley graph associated with the generating set $S_i$ and let $\Psi_i$ be the corresponding set of half-spaces. We shall denote by $\mathcal W_{r,i}$ the wall of
$X_i$ associated with a reflection $r \in W$.

We need the following terminology. A \textbf{basis} is a set of half-spaces containing a given vertex $v$ bounded by walls adjacent to $v$. A pair of half-spaces $\{\alpha, \beta\} \subset \Psi_i$ is called \textbf{geometric} if $\alpha \cap \beta$ is a fundamental domain for the action on $X_i$ of the group $\la r_\alpha, r_\beta \ra$ generated by the corresponding reflections. If $\la r_\alpha, r_\beta \ra$ is finite, then this means that for each reflection $r \in \la r_\alpha, r_\beta \ra$, the set $\alpha \cap \beta$ lies entirely in one half-space determined by the wall $\mathcal W_{r,i}$.
If $\la r_\alpha, r_\beta \ra$ is infinite, then this means that  $\alpha\cap \beta, \alpha\cap - \beta, -\alpha\cap \beta$ are all non-empty but $-\alpha\cap -\beta$ is empty. Note that if $r_\alpha, r_\beta$ commute, then $\{\alpha, \beta\}$ is automatically geometric.

\medskip

In order to show that $S_1$ and $S_2$ are conjugate, it suffices to show that there are half-spaces in $\Psi_2$ bounded by $\mathcal W_{s,2}$, over $s\in S_1$, which form a basis. In view of the main theorem of H\'ee \cite{Hee} (see also \cite[Theorem~1.2]{HRT} or \cite[Section 1.6]{CM} for other proofs of the same fact), it suffices to prove the following. There are half-spaces in $\Psi_2$ bounded by $\mathcal W_{s,2}$, over $s\in S_1$, which are pairwise geometric.

\smallskip

Let $S^0_1$ be the union of those irreducible components of $S_1$
which are not pairs of non-adjacent vertices (giving rise to
$D_\infty$ factors). For a generator $s\in S_1$ outside $S^0_1$ we
consider the unique other element $t$ in the irreducible component
of $s$. Then the walls $\mathcal W_{s,2}, \mathcal W_{t,2}$ are
disjoint and there is a geometric choice of half-spaces in
$\Psi_2$ for this pair. Since all other elements of $S_1$ commute
with both $s$ and $t$, it remains to choose pairwise geometric
half-spaces in $\Psi_2$ for the elements of $S_1^0$.

\medskip

Now the main part of the proof can start . The identity on $W$ defines a quasi-isometry $f: X_1^{(0)} \to X_2^{(0)}$, which we extend to an invertible (possibly non-continuous) mapping on the entire $X_1$. By Sublemma~\ref{subl:quasiisometry} and by the fact that $\mathcal W_{r,i}$ are at bounded Hausdorff distance from $\centra _W(r)\subset X_i^{(0)}$, we have the following. For each $\alpha \in \Psi_1$ bounded by $\mathcal W_{r,1}$, there is a (unique) half-space $\alpha' \in \Psi_2$ bounded by $\mathcal W_{r,2}$ satisfying
$$f\big(\alpha \setminus \mathcal N^X_k(\mathcal W_{r, 1})\big)
\subset \alpha'$$
for some $k\in \N$. Therefore, the assignment $\alpha \mapsto \alpha'$ defines a $W$-equivariant
bijection $f' : \Psi_1 \to \Psi_2$.

Let $\Phi\subset \Psi_1$ be the set of half-spaces containing the
identity vertex and bounded by a wall of the form $\mathcal
W_{s,1}$ for some $s\in S^0_1$. Our goal is to show that the map
$f'$ maps every pair of half-spaces from $\Phi$ to a geometric
pair in $\Psi_2$. Let $\alpha\neq \beta$ belong to $\Phi$. Set
$\alpha' = f'(\alpha)$ and $\beta' = f'(\beta)$. For $k\in \N$ and
any pair $\{\rho, \delta\} \subset \Psi_i$, we set
$$C_i(\rho, \delta, k) = \rho \setminus \mathcal N^{X_i}_k(\mathcal W_{r_\rho,i}) \cap
\delta \setminus \mathcal N^{X_i}_k(\mathcal W_{r_\delta,i}).$$

\smallskip \noindent \textbf{Case where $\partial \alpha$ and $\partial\beta$ intersect.}
In this case we proceed by contradiction. If $\{\alpha', \beta'\}$ is not geometric, then there exists a
reflection $r \in \la r_\alpha, r_\beta \ra$ different from $r_\alpha$ and $r_\beta$ satisfying the following. If $\phi'$ and $-\phi'$ denote the pair of half-spaces in
$\Psi_2$ bounded by $\mathcal W_{r,2}$, then both $\alpha'\cap\phi'$ and $\beta'\cap-\phi'$ are non-empty and contained in $\alpha'\cap \beta'$.

By Lemma~\ref{lem:Fuchsian}(ii) for all $k\in \N$ both
$C_2(\alpha', \phi', k)$ and $C_2(\beta', -\phi',k)$ are
non-empty. Denote $f'^{-1}(\phi')=\phi$. We now apply
Sublemma~\ref{subl:quasiisometry} to $f^{-1}$. It guarantees that
for $k$ large enough the sets $C_2(\alpha', \phi', k)$ and
$C_2(\beta', -\phi',k)$ are mapped into $\alpha\cap\phi$ and
$\beta\cap-\phi$, respectively. Furthermore, they are both mapped
into $\alpha\cap\beta$. Hence $\alpha\cap \beta$ is separated by
the wall $\mathcal W_{r,1}$ and $\{\alpha, \beta\}$ is not
geometric. Contradiction.

\smallskip \noindent \textbf{Case where $\partial \alpha$ and $\partial\beta$ are disjoint.}
By Lemma~\ref{lem:Fuchsian}(i,iii) all the sets  $C_1(\alpha,\beta,k), C_1(\alpha, -\beta,k),$ and $C_1(-\alpha, \beta,k)$ are non-empty. Hence all $\alpha'\cap\beta', \alpha'\cap -\beta',$ and $-\alpha'\cap \beta'$ are non-empty. This means that $\{\alpha',\beta'\}$ is geometric.

\end{proof}

%%%%%%%%%%%%%%%%%%%%%%%%%%%%%%%%%%%%%%%%%%%%%%%%
%%%%%%%%%%%%%%%%%%%%%%%%%%%%%%%%%%%%%%%%%%%%%%%%
\section{Characterisation of nearly bipolar reflections}
%%%%%%%%%%%%%%%%%%%%%%%%%%%%%%%%%%%%%%%%%%%%%%%%
%%%%%%%%%%%%%%%%%%%%%%%%%%%%%%%%%%%%%%%%%%%%%%%%
\label{sec:CharNearlyBiv}

On our way to proving Theorem~\ref{thm:bipolar}, which characterises bipolar Coxeter groups, we come upon a property slightly weaker than bipolarity, which we discuss in this section.

Given a vertex $v$ in the Cayley graph $X$ and a reflection $r\in W$, we denote by
$\mathcal C_{v, r}$ the subset of $X$ which is the intersection of half-spaces containing $v$ bounded by walls orthogonal to or equal $\mathcal W_r$. Note that $\mathcal C_{v, r}$ is a fundamental domain for the action on $X$ of the group generated by reflections in these walls. We say that $r$ is \textbf{nearly bipolar} if for all $k\in \N$ and each vertex $v$ of $X$, the set $\mathcal C_{v,r} \setminus \mathcal N^X_k(\mathcal W_r)$ is non-empty and connected.

\smallskip
The goal of this section is to prove the following (for the notation, see Section~\ref{sec:preliminaries}).

\begin{thm}\label{thm:NearlyBipolarReflection}
Let $r \in W$ be a reflection. The following assertions are equivalent.
\begin{enumerate}[(i)]
\item $r$ is nearly bipolar.

\item The following two conditions are satisfied by every vertex $v\in X$.
\begin{enumerate}[a)]
\item $T_{v, r}$ is not a spherical irreducible component of $S_v$.

\item $J_{v, r} \cup U_{v, r}$ does not separate $S_v$.
\end{enumerate}
\end{enumerate}
\end{thm}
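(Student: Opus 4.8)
The plan is to prove the two halves of condition~(ii) separately, matching the non-emptiness of $\mathcal C_{v,r}\setminus\mathcal N^X_k(\mathcal W_r)$ to~a) and its connectedness to~b). Throughout write $f(w)=d(w,\mathcal W_r)$ and call $s\in S_w$ \emph{ascending} if $f(s.w)>f(w)$; these are exactly the elements of $S_w\setminus(J_{w,r}\cup U_{w,r})$. The first observation is that an ascending $s$ neither equals $r$ nor commutes with it, so $\mathcal W_s$ is neither equal nor orthogonal to $\mathcal W_r$; hence $\mathcal W_s$ is not one of the walls bounding $\mathcal C_{v,r}$, and the move from $w$ to $s.w$ stays inside $\mathcal C_{v,r}$ whenever $w\in\mathcal C_{v,r}$. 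For the non-emptiness half I would record that the support $T_{v,r}$ is always irreducible and that an irreducible component of $S_v$ is automatically a factor (distinct components commute elementwise), so that a) is equivalent to $r$ not lying in a conjugate of a spherical factor. Non-emptiness for all $k$ then follows from Corollary~\ref{lem:SphFactor}, which furnishes an ascending, hence chamber-preserving, neighbour at every vertex; its failure, when $r$ does lie in a spherical factor, follows from $\tilde e(W,\centra_W(r))=0$ in Lemma~\ref{lem:Reflection:basic}(ii).

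The heart of the connectedness half is a local statement extracted from Lemma~\ref{lem:polygon}: if $s,s'\in S_w$ are adjacent ascending generators, the rank-$2$ residue $\langle s,s'\rangle.w$ has all of its vertices except $w$ strictly farther from $\mathcal W_r$ than $w$, and by the final remark of Lemma~\ref{lem:polygon} it crosses no wall orthogonal to $\mathcal W_r$; being strictly farther from $\mathcal W_r$ throughout, it also stays on one side of $\mathcal W_r$, so it lies in $\mathcal C_{v,r}\cap\{f>f(w)\}$ and joins $s.w$ to $s'.w$ there. Chaining such residues along a path in the Coxeter graph shows that the ascending neighbours of $w$ all lie in a single component of $\mathcal C_{v,r}\cap\{f>f(w)\}$ exactly when the induced subgraph on $S_w\setminus(J_{w,r}\cup U_{w,r})$ is connected, that is, exactly when b) holds at $w$. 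This is the bridge turning the combinatorial separation condition into a statement about the ``ascending link'' of a vertex.

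For (ii)$\Rightarrow$(i) I would deduce non-emptiness from a) as above and prove connectedness by a Morse-type rerouting argument. Given two vertices of $\mathcal C_{v,r}\setminus\mathcal N^X_k(\mathcal W_r)$, join them by any path in the convex, hence connected, chamber $\mathcal C_{v,r}$, and repeatedly excise its lowest vertices: at a strict local minimum $w$ of $f$ along the path the two neighbouring vertices are ascending, so by the bridge above and b) at $w$ they are joined inside $\mathcal C_{v,r}\cap\{f>f(w)\}$, letting us reroute the path above level $f(w)$. Iterating raises the minimal level along the path until it exceeds $k$. The only routine nuisance is the treatment of ties, i.e.\ steps along which $f$ is constant; I would remove these by a generic lexicographic perturbation of $f$ making all local minima strict, or by an ancillary argument connecting equidistant ascending directions.

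The crux is the converse (i)$\Rightarrow$(ii). Non-emptiness gives a) at once. For b) I argue by contraposition: if $S_v\setminus(J_{v,r}\cup U_{v,r})$ is disconnected, the ascending neighbours of $v$ split into at least two groups which, by the bridge lemma, lie in distinct components of $\mathcal C_{v,r}\cap\{f>f(v)\}$, so $\mathcal C_{v,r}\setminus\mathcal N^X_{f(v)}(\mathcal W_r)$ is disconnected. The main obstacle is to rule out that these groups reconnect at larger distance from $\mathcal W_r$, which is the point where the separator itself, and not merely its trace at $v$, must be analysed. Here I would use the decomposition $J_{v,r}\cup U_{v,r}=I\cup T_{v,r}^\perp$ with $I=J_{v,r}\cup(U_{v,r}\cap T_{v,r})$ spherical by Lemma~\ref{lem:shadow} and containing $r$: a reconnecting path cannot cross a wall of $T_{v,r}^\perp$ without leaving $\mathcal C_{v,r}$, while the walls attached to the spherical set $I$ all meet $\mathcal W_r$ in a single bounded residue, which should confine any admissible bridge between the two groups to a bounded neighbourhood of $\mathcal W_r$, hence into $\mathcal N^X_k(\mathcal W_r)$ for suitable $k$. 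Making this confinement precise, so that a separation seen at $v$ genuinely persists at every scale, is the delicate step I expect to require the most care.
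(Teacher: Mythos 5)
Your treatment of condition a) and of the direction (ii)$\Rightarrow$(i) is essentially the paper's own argument: non-emptiness via Corollary~\ref{lem:SphFactor}, and connectedness via rerouting at lowest vertices using rank-$2$ residues, Lemma~\ref{lem:polygon}, and condition b) at the lowest vertex (the paper organises the induction by a lexicographic ``trace''). One remark: the ``ties'' you worry about cannot occur, so no perturbation is needed --- a step inside $\mathcal C_{v,r}$ that preserves the distance to $\mathcal W_r$ would cross a wall orthogonal to $\mathcal W_r$ (by the cited \cite[Lemma 1.7]{BH}), and no such wall separates two points of $\mathcal C_{v,r}$.

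The genuine gap is in the crux, (i)$\Rightarrow$(ii)b), and you have correctly located it but your proposed mechanism does not close it. Your plan is to show that the two groups of ascending neighbours at $v$ cannot reconnect far from $\mathcal W_r$ because a reconnecting path would have to cross a wall attached to $I\cup T_{v,r}^\perp$. But there is no a priori reason for this: the walls crossed by a path in $\mathcal C_{v,r}$ from $s.v$ to $t.v$ are walls of reflections of the form $g s' g^{-1}$ with $g$ arbitrary, and need not include any wall of a reflection lying in the standard parabolic $W_{J\cup U}$ --- indeed the two-edge path $s.v,\, v,\, t.v$ crosses only $\mathcal W_s$ and $\mathcal W_t$, neither of which belongs to $W_{J\cup U}$. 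So the combinatorial separation of $S_v$ by $J\cup U$ does not directly yield a geometric separation of $\mathcal C_{v,r}$ far from $\mathcal W_r$, and ``confinement'' of the walls of $W_I$ says nothing about paths that avoid those walls altogether. The paper bridges exactly this gap with two tools absent from your sketch. First, Lemma~\ref{lem:3refl}(i) (whose proof needs the Strong Parallel Wall Theorem via Corollary~\ref{cor:ParallelWall} and the hyperbolic triangle group $\la r,s,t\ra$) converts a hypothetical connecting path avoiding $\mathcal N^X_k(\mathcal W_r)$ into a chain of reflections $s=r_0,r_1,\dots,r_n=t$ whose consecutive walls \emph{intersect} and whose intermediate walls avoid $\mathcal N^X_k(\mathcal W_r)$. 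Second, the splitting $W=W_{A\cup J\cup U}*_{W_{J\cup U}}W_{B\cup J\cup U}$ and its Bass--Serre tree: consecutive intersecting walls give reflections generating a finite group, hence intersecting fixed subtrees, so the connected union of fixed trees joining $\mathcal T^s$ to $\mathcal T^t$ must contain the edge stabilised by $W_{J\cup U}$; thus some $r_i$ lies in $W_{J\cup U}=W_{J\cup(U\cap T)}\times W_{T^\perp}$. It cannot lie in $W_{T^\perp}$ (its wall misses $\mathcal W_r$), so it lies in $W_{J\cup(U\cap T)}$, whose residue at $v$ is finite by Lemma~\ref{lem:shadow} and hence inside $\mathcal N^X_k(\mathcal W_r)$ for $k$ large --- the desired contradiction. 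Without the wall-chain and the tree argument (or a substitute for them), the separation ``seen at $v$'' cannot be propagated to large scales, and this is precisely the step your proposal leaves open.
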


Below we prove that for a bipolar or nearly bipolar reflection $r$ there are ways to connect a pair of walls by a chain of walls avoiding tubular neighbourhoods of $\mathcal W_r$. The proof bears resemblance to the main idea of \cite{TwistRigid}, where to obtain isomorphism rigidity we had to connect a pair of \emph{good markings} by a chain of other markings with \emph{base} $r$.

\begin{lemma}\label{lem:3refl}
Let $s, t\in S$ be non-adjacent and let $r \in W$ be a reflection. Assume that at least one of the following conditions is satisfied.
\begin{enumerate}[(i)]
\item
$r$ is nearly bipolar and $\la s, t\ra$ does not contain any reflection commuting with $r$.
\item
$r$ is bipolar and at most one reflection from  $\la s, t\ra$ commutes with $r$. This reflection does not equal $r$.
\end{enumerate}
Then for any $k\in \N$ there is a sequence of reflections
$s=r_0, r_1, \dots, r_n=t$
such that for all $i =1,
\dots ,n$ the wall $\mathcal W_{r_{i-1}}$ intersects $\mathcal W_{r_i}$ and for all $i = 1, \dots, n-1$ the wall $\mathcal W_{r_i}$ is disjoint from $\mathcal N^X_k(\mathcal W_r)$.
\end{lemma}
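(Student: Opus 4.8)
The plan is to reformulate the conclusion geometrically and then build the chain by a connectivity argument. First observe that an intermediate wall $\mathcal W_{r_i}$ with $1\le i\le n-1$ is required to be disjoint from $\mathcal N^X_k(\mathcal W_r)$; since any wall crossing $\mathcal W_r$ meets every neighbourhood of $\mathcal W_r$, such a wall must be \emph{parallel} to $\mathcal W_r$ (that is, $r_ir$ has infinite order) and moreover lie uniformly far from it. Thus the task is to link $\mathcal W_s$ to $\mathcal W_t$ through a sequence of pairwise intersecting walls, all of which --- except possibly the two endpoints $\mathcal W_s, \mathcal W_t$ --- are parallel to and far from $\mathcal W_r$. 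The hypotheses on $\la s,t\ra$ are what make the endpoints usable: in case (i) no reflection of $\la s,t\ra$, in particular neither $s$ nor $t$, commutes with $r$, and in case (ii) at most one does, so $\mathcal W_s$ and $\mathcal W_t$ are not orthogonal to $\mathcal W_r$ and genuinely reach into the far region where the chain will live.

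Next I would produce the far region. Since $s,t$ are non-adjacent, $\mathcal W_s$ and $\mathcal W_t$ are disjoint. In case (i) I work inside $\mathcal C_{v,r}$ for a suitable base vertex $v$ and use that $r$ is nearly bipolar, so that $\mathcal C_{v,r}\setminus \mathcal N^X_k(\mathcal W_r)$ is non-empty and connected; because $\mathcal C_{v,r}$ lies on one side of $\mathcal W_r$ and is convex, a wall crossing it while staying at distance $>k$ from $\mathcal W_r$ cannot reach $\mathcal W_r$, hence is automatically parallel to it. In case (ii) $r$ is bipolar, so by Lemma~\ref{lem:clarif} the set $X\setminus \mathcal N^X_k(\mathcal W_r)$ has exactly two connected components; I select one of them and run the same argument there, the single admissible commuting reflection being harmless because the two global components leave enough room to route around it. In either case, Corollary~\ref{cor:ParallelWall} and the Strong Parallel Wall Theorem (Theorem~\ref{parallel wall theorem main}) let me choose the walls used in the chain among walls that are separated from $\mathcal W_r$ by many pairwise parallel walls, which forces them to be globally disjoint from $\mathcal N^X_k(\mathcal W_r)$.

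The core step is then to convert a vertex path in the chosen connected far region into a chain of pairwise intersecting walls. Starting from $\mathcal W_s$, I travel out along $\mathcal W_s$ into the far region and pick a first parallel far wall $\mathcal W_{r_1}$ crossing it; I then move along a path of vertices in $\mathcal C_{v,r}\setminus \mathcal N^X_k(\mathcal W_r)$ (respectively in the chosen component), and at each adjacent pair of vertices I use the rank-$2$ residue between them together with Lemma~\ref{lem:polygon} to pivot from the current wall to the wall separating the two vertices, while Lemma~\ref{lem:Fuchsian}(ii) certifies that the successive intersection regions are non-empty away from $\mathcal N^X_k(\mathcal W_r)$; this yields consecutive walls that genuinely cross. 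I cross into $\mathcal W_t$ by the symmetric device at the far end.

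The main obstacle is exactly this conversion: ensuring \emph{simultaneously} that consecutive walls intersect and that every intermediate wall remains globally disjoint from $\mathcal N^X_k(\mathcal W_r)$, that is, parallel to and uniformly far from $\mathcal W_r$ along its entire length and not merely near the path. Controlling the global position of a wall from local data is the delicate point, and it is precisely here that the connectivity of $\mathcal C_{v,r}\setminus \mathcal N^X_k(\mathcal W_r)$ in case (i), or the bipolarity of $r$ in case (ii), combined with the prohibition on reflections of $\la s,t\ra$ commuting with $r$, is used to confine the whole construction to one side of $\mathcal W_r$ and keep it bounded away from $\mathcal W_r$.
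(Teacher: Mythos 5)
There is a genuine gap at exactly the point you flag as "the delicate point", and the resolution you offer does not close it. Your pivot step builds the chain out of walls dual to the edges of a path $\gamma$ lying outside $\mathcal N^X_k(\mathcal W_r)$. But a wall crossed by $\gamma$ is only known to be far from $\mathcal W_r$ \emph{where it meets $\gamma$}; nothing prevents it from dipping into $\mathcal N^X_k(\mathcal W_r)$ elsewhere (inside $\mathcal C_{v,r}$ it cannot be orthogonal to $\mathcal W_r$, but a wall parallel to $\mathcal W_r$ can still come arbitrarily close to it far away from the path). Connectivity of $\mathcal C_{v,r}\setminus \mathcal N^X_k(\mathcal W_r)$, Lemma~\ref{lem:polygon} and Lemma~\ref{lem:Fuchsian}(ii) constrain the path, not the global position of the walls it crosses, so "confining the whole construction" is an assertion, not an argument. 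The paper avoids this issue entirely by never using walls dual to the path: for each vertex $x_i$ of $\gamma$ (which lies at distance at least $L$ from $\mathcal W_r$), Corollary~\ref{cor:ParallelWall} gives a wall \emph{separating} $x_i$ from $\mathcal N^X_k(\mathcal W_r)$ --- such a wall is globally disjoint from that neighbourhood by definition --- and among all such walls one \emph{nearest to $\mathcal W_r$} is chosen, called $\mathcal W_i$. Consecutive intersection is then forced by a nesting/minimality argument: if $\mathcal W_{i-1}$ and $\mathcal W_i$ were disjoint, then since $x_{i-1}$ and $x_i$ are adjacent, one of the two walls, say $\mathcal W_i$, has $x_{i-1}$ and $x_i$ on the same side; so both walls separate $x_{i-1}$ from $\mathcal N^X_k(\mathcal W_r)$, minimality of $\mathcal W_{i-1}$ makes it separate $\mathcal W_i$ from $\mathcal N^X_k(\mathcal W_r)$, and this contradicts minimality of $\mathcal W_i$. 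This selection-by-minimality device is the key idea missing from your proposal; without it, or a substitute, the chain you construct need not consist of walls disjoint from $\mathcal N^X_k(\mathcal W_r)$.

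A second, smaller gap concerns the endpoints. You claim $\mathcal W_s$ and $\mathcal W_t$ "genuinely reach into the far region" because they are not orthogonal to $\mathcal W_r$, but non-orthogonality alone does not exclude that $\mathcal W_s$ stays in a bounded neighbourhood of $\mathcal W_r$ (this is precisely the phenomenon of domination studied later in the paper). The paper instead works with the $\{s,t\}$-residue $R$ through the identity vertex $v_0$: the hypotheses imply $r\notin \la s,t\ra$, so by Deodhar's theorem $\la r,s,t\ra$ is an irreducible rank-$3$ reflection group containing the infinite parabolic $\la s,t\ra$, hence a hyperbolic triangle group; therefore $R$ escapes every tubular neighbourhood of $\mathcal W_r$, and the path $\gamma$ is taken between $v_{\pm}=(st)^{\pm N}.v_0$ for large $N$. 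Since $\mathcal W_s$ and $\mathcal W_t$ separate $v_-$ from $v_+$, the path crosses both, and the first chosen wall $\mathcal W_1$ separates $x_1$ from $v_0$, two vertices both adjacent to $\mathcal W_s$, forcing $\mathcal W_1$ to cross $\mathcal W_s$ (similarly at $t$). This is how the chain actually attaches to $s$ and $t$; your sketch has no mechanism for it.
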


\begin{proof}
Denote by $v_0 \in X$ the identity vertex. Without loss of generality, we may assume that the given $k$ is larger than the distance from $v_0$ to $\mathcal W_r$.
By Corollary~\ref{cor:ParallelWall}, there is a constant
$L$ such that for any vertex $v$ at distance at least $L$ from
$\mathcal W_r$, there is a wall separating $v$ from $\mathcal
N^X_k(\mathcal W_r)$.

\begin{figure}[ht]
\includegraphics[width=11cm]{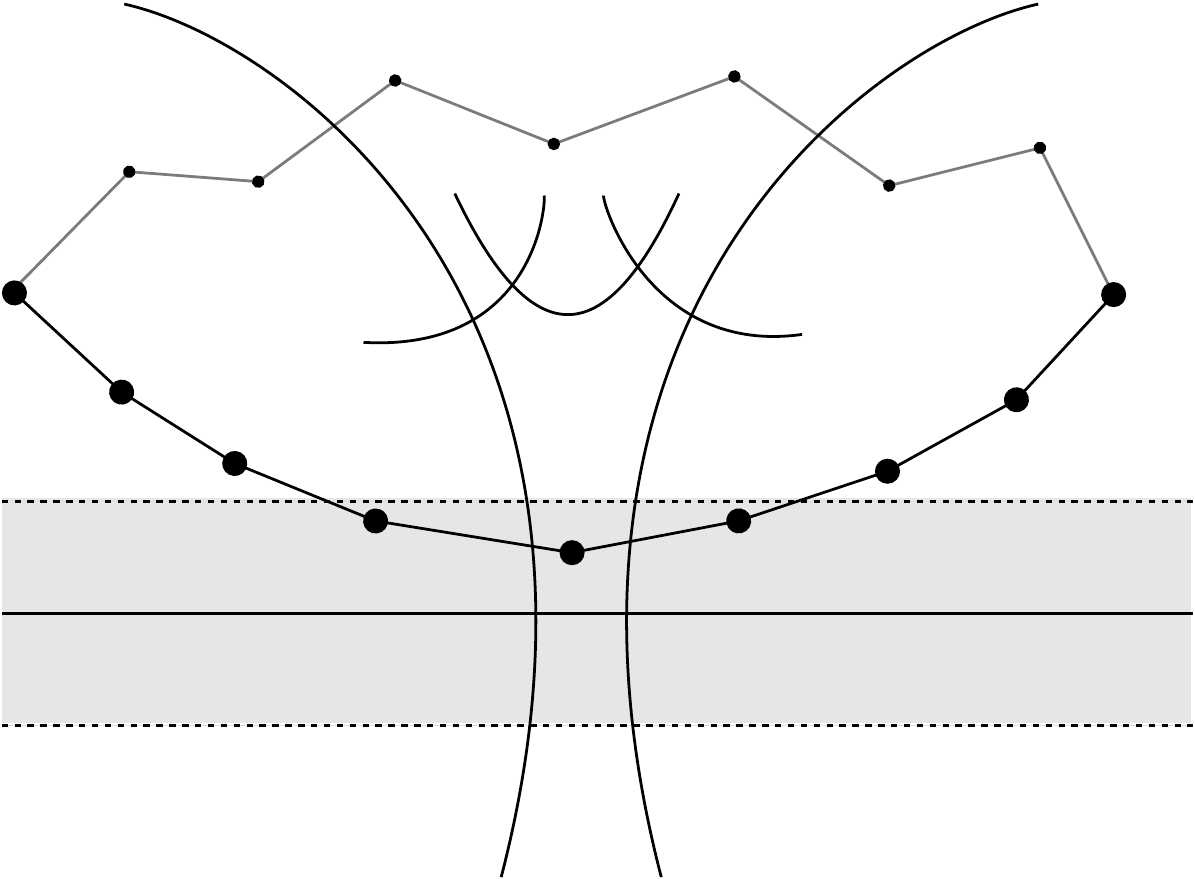}
\put(2, 67){$\mathcal W_r$}
\put(-280, 52){$\mathcal N^X_k(\mathcal W_r)$}
\put(-200, 12){$\mathcal W_s$}
\put(-137, 12){$\mathcal W_t$}
\put(-30, 130){$R$}
\put(-300, 150){$v_-$}
\put(-40, 150){$v_+$}
\put(-217, 217){$x_1$}
\put(-175, 200){$x_2$}
\put(-130, 217){$x_{n-1}$}
\put(-235, 140){$\mathcal W_{1}$}
\put(-168, 130){$\mathcal W_{2}$}
\put(-101, 140){$\mathcal W_{n-1}$}
\caption{Proof of Lemma~\ref{lem:3refl}}
\label{fig:Lemma42}
\end{figure}

Denote by $R$ be the $\{s, t\}$-residue containing $v_0$ (see
Figure~\ref{fig:Lemma42}). Since $r$ does not belong to $\la
s,t\ra$, the residue $R$ lies entirely on one side of $\mathcal
W_r$. We claim that  $\la r, s,t \ra$ is a hyperbolic triangle
group. Indeed, $\la r, s, t\ra$ is an irreducible reflection
subgroup of rank $3$, hence by \cite{Deodhar_refl} it is a Coxeter
group of rank $3$. Since it contains an infinite parabolic
subgroup of rank~$2$, namely $\la s, t\ra$, it cannot be of affine
type. Thus $\la r, s, t\ra$ is a hyperbolic triangle group, as
claimed. The claim implies that every tubular neighbourhood of
$\mathcal W_r$ contains at most a bounded subset of the residue
$R$.

Hence for $N$ large enough, the vertices $v_- = (st)^{-N}.v_0$ and
$v_+ = (st)^{N}.v_0$ are not contained in $\mathcal N^X_L(\mathcal
W_r)$. By hypothesis, either $r$ is nearly bipolar and the vertices $v_-$ and $v_+$ are both contained in $\mathcal C_{v_0,r}$ or $r$ is bipolar. Thus there is a path connecting $v_+$ to $v_-$ outside $\mathcal N^X_L(\mathcal W_r)$.

There is a sub-path $(x_1, \dots, x_{n-1})$ of $\gamma$ such that $x_1$ is adjacent to $\mathcal W_s$ and $x_{n-1}$ is adjacent to $\mathcal W_t$. By the choice of $L$, for each $i$ there is some wall which separates $x_i$ from
$\mathcal N^X_k(\mathcal W_r)$. Among these, we pick one nearest possible $\mathcal W_r$ and call it $\mathcal W_i$. We denote the associated reflection by $r_i$.

Notice first that, since $\mathcal W_1$ separates $x_1$ from $v_0$, which are both adjacent to $\mathcal W_s$, it follows that $\mathcal W_1$ intersects $\mathcal W_s$. Analogously $\mathcal W_{n-1}$ intersects $\mathcal W_t$. It remains to show that
$\mathcal W_{i-1}$ intersects $\mathcal W_{i}$
for all $i = 2, \dots, n-1$.

Assume for a contradiction that $\mathcal W_{i-1}$ does not intersect $\mathcal W_{i}$. In particular we have $\mathcal W_{i-1} \neq \mathcal W_{i}$ and it follows that for some $j \in \{i-1, i\}$, say for $j=i$, the vertices $x_{i-1}$ and $x_i$ lie on the same side of $\mathcal W_{j}$. It follows that the vertex $x_{i-1}$ is separated from $\mathcal N^X_k(\mathcal W_r)$ by both $\mathcal W_{i-1} $ and $\mathcal W_{i}$. By the minimality hypothesis on $\mathcal W_{i-1}$, the wall $\mathcal W_{i-1}$ separates $\mathcal N^X_k(\mathcal W_r)$  from $\mathcal W_i$. But this contradicts the minimality hypothesis on $\mathcal W_i$.
\end{proof}

We can now provide the proof of the main result of this section.

\begin{proof}[Proof of Theorem~\ref{thm:NearlyBipolarReflection}]
(i) $\Rightarrow$ (ii)
Assume that $r$ is nearly bipolar. Since $\mathcal C_{v,r}\setminus \mathcal N^X_k(r)$ is non-empty for each $k\in \N$, the set $X\setminus \mathcal N^X_k(r)$ is non-empty for each $k$. Then $\tilde e(W, \centra_W(r))$ is non-zero and in view of Lemma~\ref{lem:Reflection:basic}(ii) we have condition a).

\medskip

It remains to prove condition b), which we do by contradiction.
Assume that there are $s, t \in S_v$ separated by $J_{v,r}\cup
U_{v,r}$. We set $J = J_{v, r},\ U = U_{v, r},$ and $T=T_{v,r}$.
By Lemma~\ref{lem:polygon} the group  $\la s, t\ra$ does not
contain any reflection which commutes with $r$. Therefore, we are
in position to apply Lemma~\ref{lem:3refl}(i). Let $k$ be large
enough so that the residue stabilised by $W_{J\cup (U\cap T)}$ and
containing $v$ (this residue is finite by Lemma~\ref{lem:shadow})
lies entirely in $\mathcal N^X_k(\mathcal W_r)$.
Lemma~\ref{lem:3refl}(i) provides a sequence of reflections
$s=r_0, \dots, r_n = t$ such that for all $i=1, \dots, n-1$ the
wall $\mathcal W_{r_i}$ avoids $\mathcal N^X_k(\mathcal W_r)$ and
for all $i=1, \dots, n$ walls $\mathcal W_{r_{i-1}}$ and $\mathcal
W_{r_i}$ intersect.

The group $W$ splits over $W_{J \cup U}$ as an amalgamated product
of two factors each containing one of $s$ and $t$. Consider now
the $W$-action on the associated Bass--Serre tree $\mathcal T$.
Thus $W_{J \cup U}$ is the stabiliser of some edge $e$ of
$\mathcal T$, and the elements $s$ and $t$ fix distinct vertices
of $e$, but neither of them fixes $e$. Furthermore, for each $i =
1, \dots, n$, the fixed-point sets $\mathcal T^{r_{i-1}}$ and
$\mathcal T^{r_i}$ intersect. It follows that some $r_i$ fixes the
edge $e$, hence it lies in $W_{J \cup U}$. From the inclusions $J
\subset T,\ U\subset T\cup T^\perp$ and $T^\perp\subset U$, we
deduce $$W_{J \cup U} = W_{J\cup (U\cap T)} \times W_{T^\perp}.$$
Thus a reflection in $W_{J\cup U}$ belongs either to $W_{J\cup
(U\cap T)}$ or to $W_{T^\perp}$. Since the wall $\mathcal W_{r_i}$
does not meet $\mathcal W_r$, the order of $rr_i$ must be
infinite, hence $r_i$ does not belong to $W_{T^\perp}$. Therefore
we have $r_i\in W_{J\cup(U\cap T)}$. This implies that $\mathcal
W_{r_i}$ meets the residue stabilised by $W_{J\cup(U\cap T)}$
containing $v$, contradicting the fact that $\mathcal W_{r_i}$
avoids $\mathcal N^X_k(\mathcal W_r)$.

\medskip

\item
(ii) $\Rightarrow$ (i)
Let $k\in \N$ and let $v \in X$ be a vertex. We need to show that
$$\mathcal C_{v,r} \setminus \mathcal N^X_k(\mathcal W_r)$$
is non-empty and connected. For non-emptiness it suffices to prove that any vertex $w$ of $X$ is adjacent to a vertex which is farther from $\mathcal W_r$. Otherwise we have $S_w=J_{w,r}\cup U_{w,r}$ and it follows that $S_v$ equals $T_{w,r}\cup T_{w,r}^\perp$. Moreover, $T_{w,r}$ is then equal to $J_{w,r}\cup (U_{w,r}\cap T_{w,r})$, which is finite by Lemma~\ref{lem:shadow}. This would contradict condition a).

It remains to prove connectedness. Let $x, y$ be two vertices in $\mathcal C_{v,r} \setminus \mathcal N_k^X(\mathcal W_r)$. We shall construct a path connecting $x$ to $y$ outside of $\mathcal N^X_k(\mathcal W_r)$. First notice that, by the definition of $\mathcal C_{v,r}$, no wall orthogonal to $\mathcal W_r$ separates $x$ from $y$.

We consider the collection $\mathcal G$ of all (possibly
non-minimal) paths connecting $x$ to $y$ entirely contained in $\mathcal C_{v,r}$. Notice that $\mathcal G$ is non-empty since it contains all minimal length paths from $x$ to $y$. To each path $\gamma \in \mathcal G$,
we associate a $k$-tuple of integers $(n_1, \dots, n_k)$,
where $n_i$ is defined as the number of vertices of $\gamma$ at distance $i-\frac{1}{2}$ from $\mathcal W_r$. We call this tuple $(n_1, \dots, n_k)$ the
\textbf{trace} of the path $\gamma$. We order the elements of
$\mathcal G$ using the lexicographic order on the set of their traces.

We need to show that $\mathcal G$ contains some path of trace
$(0, \dots, 0)$. To this end, it suffices to associate to every
path in $\mathcal G$ with non-zero trace a path of strictly
smaller trace. Let thus $\gamma \in \mathcal G$ be a path with non-zero trace
$(n_1, \dots, n_k)$, put $j = \min \{i \; | \; n_i
>0\}$ and let $v$ be some vertex of $\gamma$ contained in
$\mathcal N^X_j(\mathcal W_r)$. Let also $v_-$ and $v_+$ be
respectively the predecessor and the successor of $v$ on $\gamma$. The vertices
$v_-$ and $v_+$ do not belong to $\mathcal N^X_j(\mathcal W_r)$ (otherwise $\gamma$ would cross walls which are orthogonal to
$\mathcal W_r$). Set $J = J_{v, r}, \ T = T_{v, r},$ and $U = U_{v, r}$. Let $s_-$ and $s_+$ be the elements of $S_v$ satisfying $v_- = s_-.v$  and $v_+=s_+.v$. Since $v_-$ and $v_+$ do not belong to $\mathcal N^X_j(\mathcal W_r)$, we infer that $s_-$ and $s_+$ do not belong to
$J\cup U$.

Condition b) implies existence of a path
$$s_-= s_0, s_1, \dots, s_m = s_+$$
connecting $s_-$ to $s_+$ in $S_v \setminus (J \cup U)$. Put
$v_k = s_k. v$ for
$k = 0, 1, \dots, m$. In particular $v_0 = v_-$ and $v_m = v_+$. Notice that
for each $k=1,\ldots, m$ the rank-$2$ residue containing $v$ and stabilised by  $\la s_{k-1}, s_k\ra$ is finite. Therefore, it contains a path $\gamma_k$
connecting $v_{k-1}$ to $v_k$ but avoiding $v$. Since $s_{k-1}$ and $s_{k}$ are not in $J \cup U$, we deduce
from Lemma~\ref{lem:polygon} that $\gamma_k$ does not intersect
$\mathcal N^X_j(\mathcal W_r)$, and that no wall crossed by $\gamma_k$
is orthogonal to $\mathcal W_r$.

We now define a new path
$\gamma' \in \mathcal G$ as follows. The path $\gamma'$ coincides
with $\gamma$ everywhere, except that the sub-path $(v_-, v, v_+)$
is replaced by the concatenation $\gamma_1 \dots
\gamma_m$. Notice that $\gamma'$ is entirely contained in $\mathcal C_{v,r}$. Denoting the trace of $\gamma'$ by $(n'_1, \dots,
n'_k)$, it follows from the construction that we have $n'_i = 0$ for all
$i <j$ and $n'_j < n_j$. Hence the trace of $\gamma'$ is
smaller than the trace of $\gamma$, as desired.
\end{proof}

%%%%%%%%%%%%%%%%%%%%%%%%%%%%%%%%%%%%%%%%%%%%%%%%
%%%%%%%%%%%%%%%%%%%%%%%%%%%%%%%%%%%%%%%%%%%%%%%%
\section{Characterisation of bipolar reflections}
%%%%%%%%%%%%%%%%%%%%%%%%%%%%%%%%%%%%%%%%%%%%%%%%
%%%%%%%%%%%%%%%%%%%%%%%%%%%%%%%%%%%%%%%%%%%%%%%%
\label{sec:bipolar reflections}

In this section we finally prove Theorem~\ref{thm:bipolar}. We deduce it from Theorem~\ref{thm:BipolarReflection} characterising bipolar reflections, which is similar in spirit to Theorem~\ref{thm:NearlyBipolarReflection}. In order to state it we introduce the following terminology.

Given two reflections $r, t \in W$, we say that {$r$ \textbf{dominates} $t$} (or $t$ \textbf{is dominated by} $r$) if the wall $\mathcal W_t$ is contained in some tubular neighbourhood of $\mathcal W_r$.
In particular, $t$ is dominated by $r$ if $\centra_W(t)$ is virtually contained in $\centra_W(r)$ (the converse is also true, but we do not need it).

\begin{thm}\label{thm:BipolarReflection}
Let $r \in W$ be a reflection. The following assertions are equivalent.
\begin{enumerate}[(i)]
\item $r$ is bipolar.

\item $r$ is nearly bipolar and does not dominate any reflection $t\neq r$ commuting with $r$.

\item The following three conditions are satisfied by every vertex $v$ of $X$.
\begin{enumerate}[a)]
\item $T_{v, r}$ is not a spherical irreducible component of $S_v$.

\item There is no non-empty spherical $I\subset T_{v,r}$ such that $I\cup T_{v, r}^\perp$ separates $S_v$.

\item If  $T_{v, r}$ is spherical and an odd component $O$ of $S_v$ is contained in $T_{v, r}^\perp$, then there are adjacent $t \in O$ and $t' \in S_v \setminus (T_{v, r} \cup T_{v, r}^\perp)$.
\end{enumerate}
\end{enumerate}
\end{thm}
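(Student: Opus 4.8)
The plan is to use condition~(ii) as a bridge between the geometric statement~(i) and the combinatorial statement~(iii), building on Theorem~\ref{thm:NearlyBipolarReflection}, which already identifies near bipolarity with its conditions a) and b). The only new geometric phenomenon to control is domination of a reflection $t \neq r$ that commutes with $r$: informally, bipolarity is near bipolarity \emph{plus} the requirement that no such $t$ be dominated, and conditions b) and c) of assertion~(iii) are the diagrammatic transcription of this extra requirement. I would therefore prove the cycle by first establishing the geometric equivalence (i)$\Leftrightarrow$(ii), and then the combinatorial equivalence (ii)$\Leftrightarrow$(iii).

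For (i)$\Rightarrow$(ii), note first that every reflection $t$ commuting with $r$ satisfies $t \mathcal W_r = \mathcal W_r$, so $\la r,t\ra$-type reflections preserve $\mathcal N^X_k(\mathcal W_r)$ and each side of $\mathcal W_r$. Near bipolarity of a bipolar $r$ then follows by a folding argument: any two points of $\mathcal C_{v,r}\setminus \mathcal N^X_k(\mathcal W_r)$ lie in one side of $\mathcal W_r$, which is connected by bipolarity, and a connecting path may be retracted into the fundamental domain $\mathcal C_{v,r}$ by reflecting in walls orthogonal to $\mathcal W_r$, all of which fix that side and the tube (non-emptiness is obtained the same way, or from Corollary~\ref{lem:SphFactor} via Lemma~\ref{lem:Reflection:basic}(ii)). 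For the absence of domination, suppose $r$ dominated some commuting $t \neq r$, say $\mathcal W_t \subset \mathcal N^X_m(\mathcal W_r)$. Since $t$ fixes each side of $\mathcal W_r$, for $k \geq m$ the wall $\mathcal W_t$ is swallowed by the tube inside each side, so it separates that side into at least two components of $X\setminus \mathcal N^X_k(\mathcal W_r)$ that cannot be rejoined without crossing $\mathcal W_t$ inside the removed tube; this yields at least four components, contradicting bipolarity. Conversely, for (ii)$\Rightarrow$(i) I would tile a fixed side of $\mathcal W_r$ by the translates of $\mathcal C_{v,r}$ under the group generated by the reflections orthogonal to $\mathcal W_r$. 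Each tile minus the tube is connected by near bipolarity; two tiles adjacent across a wall $\mathcal W_t$ reconnect outside $\mathcal N^X_k(\mathcal W_r)$ precisely because $t$ is not dominated, so $\mathcal W_t$ exits every tube; as the tile-adjacency graph is connected, each side stays connected, giving exactly two components.

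The remaining equivalence (ii)$\Leftrightarrow$(iii) is where the real work lies. By Theorem~\ref{thm:NearlyBipolarReflection}, near bipolarity is equivalent to conditions a) and b), and since condition b) of~(iii) applied to the spherical set $J_{v,r}\cup(U_{v,r}\cap T_{v,r})$ recovers the separation condition b) of Theorem~\ref{thm:NearlyBipolarReflection}, it suffices to prove, \emph{assuming near bipolarity}, that $r$ dominates no commuting reflection $t \neq r$ if and only if conditions b) and c) of~(iii) hold at every vertex. I expect this translation to be the main obstacle, and I would carry it out in both directions at a chosen vertex $v$ adjacent to $\mathcal W_r$, working with the local diagram $S_v$ and the support data $T_{v,r}, T_{v,r}^\perp$. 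For the direction ``diagram failure $\Rightarrow$ domination'', a violation of b) produces a spherical $I \subset T_{v,r}$ with $I \cup T_{v,r}^\perp$ separating $S_v$; one uses the resulting parabolic splitting of $W$ over $W_{I\cup T_{v,r}^\perp} = W_I \times W_{T_{v,r}^\perp}$ (with $W_I$ finite) to confine the wall of a suitable reflection of $W_{T_{v,r}^\perp}$ to a neighbourhood of $\mathcal W_r$, and a violation of c) confines (via odd-component conjugacy) the wall of a reflection in an isolated odd component $O \subset T_{v,r}^\perp$ that cannot be pushed past the spherical piece $T_{v,r}$.

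For the converse direction ``domination $\Rightarrow$ diagram failure'', the template is the contradiction argument of Theorem~\ref{thm:NearlyBipolarReflection}, now fed by the bipolar chain-of-walls Lemma~\ref{lem:3refl}(ii): if conditions b) and c) held at every vertex while $r$ dominated a commuting $t \neq r$, I would connect walls across the relevant separating set by a chain avoiding $\mathcal N^X_k(\mathcal W_r)$, note that some wall of the chain must stabilise the Bass--Serre edge and hence lie in $W_{I\cup T_{v,r}^\perp} = W_I \times W_{T_{v,r}^\perp}$, and observe that this wall has infinite-order product with $r$ (being disjoint from the tube) and so lies in neither factor, a contradiction. The delicate point throughout is to verify the hypotheses of Lemma~\ref{lem:3refl}(ii) --- non-adjacency of the separated generators and that at most one reflection of the generated dihedral group commutes with $r$ --- which I would control using Lemma~\ref{lem:polygon} and the inclusions $J_{v,r}\subset T_{v,r}$, $T_{v,r}^\perp \subset U_{v,r} \subset T_{v,r}\cup T_{v,r}^\perp$; getting condition c) and the odd-component bookkeeping exactly right is, I expect, the most technical step. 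Theorem~\ref{thm:bipolar} then follows by applying this characterisation at the identity vertex, where $S_{v_0}=S$ and $T_{v_0,s}=\{s\}$, and running over the conjugates that describe all vertices.
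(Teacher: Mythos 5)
Your handling of the equivalence (i) $\Leftrightarrow$ (ii) is essentially the paper's own argument: your four-components contradiction is the contrapositive of their path-crossing argument for non-domination, your folding argument supplies the ``bipolar implies nearly bipolar'' step that the paper declares obvious, and your tiling version of (ii) $\Rightarrow$ (i) parallels their path-decomposition with bridges across orthogonal walls (both versions gloss over the same small point, namely that the far-away edge crossing $\mathcal W_t$ reconnects \emph{some} pair of chambers adjacent along $\mathcal W_t$, not necessarily the pair at hand). Your reduction of the remaining work to ``no domination $\Leftrightarrow$ b) and c), given near bipolarity'' via Theorem~\ref{thm:NearlyBipolarReflection} and the spherical set $J_{v,r}\cup(U_{v,r}\cap T_{v,r})$ is also sound, and your treatment of condition c) (violation of c) yields domination, via Lemma~\ref{lem:OddComp} and centraliser containment) matches the paper.

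The genuine gap is that you have swapped the two key arguments between the two directions of this last equivalence, and neither works where you put it. In your direction ``domination $\Rightarrow$ diagram failure'' (that is, (iii) $\Rightarrow$ no domination), you propose to invoke the chain-of-walls Lemma~\ref{lem:3refl}(ii) and the Bass--Serre contradiction. But Lemma~\ref{lem:3refl}(ii) requires $r$ to be \emph{bipolar}, which is exactly what fails under your hypothesis that $r$ dominates a commuting reflection (by the implication (i) $\Rightarrow$ (ii) you have already proved); worse, the Bass--Serre tree needs a splitting of $W$ over $W_I\times W_{T_{v,r}^\perp}$, i.e.\ a separation of $S_v$, which is precisely what condition b) --- assumed to hold in this direction --- rules out. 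So the argument is circular and has no tree to act on. What this direction actually requires, and what the paper does, is to exploit domination directly: since $\mathcal W_t$ stays in a tube around $\mathcal W_r$, one may choose a vertex $v$ adjacent to $\mathcal W_t$ at \emph{maximal} distance from $\mathcal W_r$, and then rank-$2$ residue moves (Lemma~\ref{lem:polygon}) either push $v$ farther, contradicting maximality --- using b) to produce a suitable adjacent generator when $t\notin T_{v,r}^\perp$ --- or, when $t\in T_{v,r}^\perp$, walk $t$ along its odd component to produce a violation of c). None of this appears in your proposal. Conversely, in your direction ``diagram failure $\Rightarrow$ domination'' for condition b), the assertion that the splitting ``confines the wall of a suitable reflection of $W_{T_{v,r}^\perp}$'' is left without any mechanism, and indeed no direct confinement is ever constructed in the paper. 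The chain-of-walls/Bass--Serre machinery you misplaced belongs here, run indirectly: assume near bipolarity, a violation of b), and (for contradiction) no domination; then (ii) $\Rightarrow$ (i) makes $r$ bipolar, Lemma~\ref{lem:3refl}(ii) applies (after checking, via Lemma~\ref{lem:polygon} and Deodhar, that $\la s,t\ra$ contains at most one reflection commuting with $r$ and not $r$ itself), and some wall of the resulting chain must lie in the edge stabiliser $W_I\times W_{T_{v,r}^\perp}$, hence in the spherical factor $W_I$, contradicting the fact that it avoids a large tube around $\mathcal W_r$.
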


Before providing the proof of Theorem~\ref{thm:BipolarReflection}, we apply it to the following.

\begin{proof}[Proof of Theorem~\ref{thm:bipolar}]
First assume that $W$ is bipolar, \emph{i.e.} for some Coxeter generating set $S\subset W$ all elements of $S$ are bipolar. Given any irreducible subset $T \subset S$, there exists a reflection $r \in W_T$ with \emph{full support}, \emph{i.e.} a reflection which is not contained in $W_{T'}$ for any proper subset $T' \subset T$. Let $v_0$ denote the identity vertex of $X$. Then we have $T = T_{v_0, r}$. Conditions a), b), and c) of Theorem~\ref{thm:bipolar} follow now directly from conditions a), b), and c) of Theorem~\ref{thm:BipolarReflection}.

Conversely, assume that $S\subset W$ satisfies conditions a), b), and c) of Theorem~\ref{thm:bipolar}. Since for any $v,r$ the set $T_{v,r}$ is irreducible, these yield immediately conditions a), b) and c) of Theorem~\ref{thm:BipolarReflection}. Hence every reflection of $W$ is bipolar and $W$ is bipolar.
\end{proof}

We begin the proof of Theorem~\ref{thm:BipolarReflection} with a (probably well-known) lemma which indicates the role of the odd components.

\begin{lemma}\label{lem:OddComp}
Let $s \in S$, let $O$ be the odd component of $s$ in $S$ and let $\bar O$ be the set of all elements of $S$ adjacent to some element of $O$. Then the centraliser $\centra_W(s)$ is contained in $W_{\bar O}$.
\end{lemma}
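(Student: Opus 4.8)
The plan is to work inside the Cayley graph $X$ and exploit that $\centra_W(s)$ is exactly the stabiliser of the wall $\mathcal W_s$, acting cocompactly on it (Remark~\ref{rem:centraliser is the wall}). First I would record two facts about $O$. Since two elements of $S$ are conjugate in $W$ precisely when they are joined by a path of odd-adjacent generators, the odd component $O$ is exactly the set of generators conjugate to $s$; consequently every edge of $X$ crossing $\mathcal W_s$ carries a label in $O$, because the edge $\{v, vt\}$ crossed by $\mathcal W_s$ satisfies $t = v^{-1}sv \in S$, a conjugate of $s$. I would also note that $\bar O$ (understood as the closed neighbourhood, so that $O \subseteq \bar O$, which is forced since $s \in \centra_W(s) \subseteq W_{\bar O}$) contains both $O$ and $\{s\}^\perp$: each generator commuting with $s$ is adjacent to $s \in O$, hence lies in $\bar O$.

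Now take $w \in \centra_W(s)$. Both $v_0$ and $w.v_0$ are adjacent to $\mathcal W_s$, since $v_0^{-1}sv_0 = s \in S$ and $w^{-1}sw = s \in S$. Replacing $w$ by $sw$ if necessary (legitimate because $sw \in \centra_W(s)$ and $s \in \bar O$), I may assume $v_0$ and $w.v_0$ lie on the same side of $\mathcal W_s$. The goal is then to join $v_0$ to $w.v_0$ by an edge-path in $X$ that stays adjacent to $\mathcal W_s$ on this side and all of whose $S$-labels lie in $\bar O$; since a path from $v_0$ reads off its endpoint as the product of its labels, this exhibits $w$ as a word in $\bar O$, i.e. $w \in W_{\bar O}$. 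Such a path exists because the wall $\mathcal W_s$ is gallery-connected and $\centra_W(s)$ acts cocompactly on it, so any two chambers adjacent to $\mathcal W_s$ on a fixed side are connected by elementary moves of two types: (i) a step to an adjacent vertex across a wall orthogonal to $\mathcal W_s$, and (ii) an arc through a finite rank-$2$ residue $R$ that $\mathcal W_s$ crosses, passing from one chamber of $R$ adjacent to $\mathcal W_s$ to the next one on the same side.

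The heart of the argument is that in each move the $S$-labels encountered lie in $\bar O$. For a move of type (i), Lemma~\ref{lem:polygon} shows that an adjacent vertex stays at distance $\tfrac12$ from $\mathcal W_s$ only when the crossing generator $u$ commutes with the local conjugate $t = v^{-1}sv \in O$ of $s$; hence $u$ is adjacent to $t \in O$ and so $u \in \bar O$. For a move of type (ii), let $\{a,b\} \subset S$ be the type of $R$; since $\mathcal W_s$ is one of the walls of $R$, the reflection $s$ is conjugate to a reflection of the dihedral group $\langle a, b\rangle$, hence to $a$ or to $b$, say to $a$, so $a \in O$. As $a$ and $b$ are adjacent, $b \in \bar O$ as well, and the arc, being traced inside $R$, uses only the labels $a, b \in \bar O$. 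The main obstacle—and the point where the closed neighbourhood $\bar O$, rather than $O$ itself, becomes essential—is exactly this type-(ii) analysis: when $m(a,b)$ is even the generator $b$ need not be conjugate to $s$ (so $b \notin O$), yet it genuinely occurs as an arc-label, and it is captured precisely because $\bar O$ adjoins to $O$ all its neighbours. Granting the standard gallery-connectivity of walls that produces such a path, the two label computations complete the proof.
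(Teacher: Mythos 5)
Your proof is correct and follows essentially the same route as the paper's: both rest on the standard connectivity fact (cited in the paper as Deodhar's Proposition~5.5) that vertices adjacent to $\mathcal W_s$ are linked through rank-$2$ residues crossed by $\mathcal W_s$, and then verify that all edge labels arising along such a connection lie in $\bar O$. Your only real streamlining is to replace the paper's inductive tracking of the local type $s_i$ by the direct observation that every edge crossed by $\mathcal W_s$ has type in $O$, because $O$ is precisely the set of generators conjugate to $s$.
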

\begin{proof}
Consider an element $w$ of the centraliser $\centra_W(s)$. Denote
by $v_0$ the identity vertex in $X$. By \cite[Proposition
5.5]{Deodhar} there is a sequence of vertices $v_0, v_1,\ldots,
v_n=w.v_0$, such that all $v_i$ are adjacent to $\mathcal W_s$ and
the pairs $v_{i-1},v_i$ lie in a rank-$2$ residue $R_i$
intersecting $\mathcal W_s$. Denote by $s_i\in S$ the type of the
edge between $v_i$ and $s.v_i$, in particular we have $s_0=s$. We
can show inductively that if $R_i$ is of type $\{s_{i-1},t\}$ with
$s_{i-1}$ and $t$ odd-adjacent, then $s_i$ equals $t$. If
$s_{i-1}$ and $t$ are not odd-adjacent, then $s_i$ equals
$s_{i-1}$. It follows that $w.v_0$ is connected to $v_0$ by a path
of edges all of whose types lie in $\bar O$.
\end{proof}

\begin{proof}[Proof of Theorem~\ref{thm:BipolarReflection}]
We first provide the proof of the less involved equivalence (i) $\Leftrightarrow$ (ii). Then we give the proofs of (i) $\Rightarrow$ (iii) and of (iii) $\Rightarrow$ (ii).

\medskip
\noindent (i) $\Rightarrow$ (ii)
Assume that $r$ is bipolar. Then clearly $r$ is nearly bipolar. Consider a reflection $t \neq r$ commuting with $r$ and let $k\in \N$. Since $r$ is bipolar, there is a vertex $v$ lying outside $\mathcal N^X_k(\mathcal W_r)$. In particular $v' = t. v$ is another such vertex and moreover $v$ and $v'$ lie on the same side of $\mathcal W_r$. Since $r$ is bipolar, there is a path joining $v$ to $v'$ outside
$\mathcal N^X_k(\mathcal W_r)$. This path must cross $\mathcal W_t$, hence $\mathcal W_t$ is not contained in  $\mathcal N^X_k(\mathcal W_r)$, as desired.

\medskip \noindent
(ii) $\Rightarrow$ (i)
Assume now that $r$ is nearly bipolar and does not dominate any reflection $t\neq r$ commuting with $r$. Let $k\in \N$ and let $x, y$ be vertices of $X$ outside of $\mathcal N^X_k(\mathcal W_r)$ not separated by $\mathcal W_r$. Let $\mathcal W_1, \dots, \mathcal W_n$ be all the walls orthogonal to $\mathcal W_r$ which are successively crossed by some minimal length path joining $x$ to $y$. For each $i$, since the reflection in $W_i$ is not dominated by $r$, we can pick a pair of adjacent vertices $z_i, z'_i$ lying outside of $\mathcal N^X_k(\mathcal W_r)$ and such that $z_i$ (resp.\ $z'_i$) lies on the same side of $\mathcal W_i$ as $x$ (resp.\ $y$). Denote additionally $z'_0 = x$ and $z_{n+1} = y$. Since $r$ is nearly bipolar, any two vertices outside of $\mathcal N^X_k(\mathcal W_r)$ and not separated by any wall orthogonal to $\mathcal W_r$ may be connected by a path lying entirely outside of $\mathcal N^X_k(\mathcal W_r)$. Thus for each $i = 0, \dots, n$ there is a path avoiding $\mathcal N^X_k(\mathcal W_r)$ and connecting $z'_i$ to $z_{i+1}$. Concatenating all these paths we obtain a path avoiding  $\mathcal N^X_k(\mathcal W_r)$ and joining $x$ to $y$. This shows that $r$ is bipolar, as desired.

\medskip
This ends the proof of equivalence (i) $\Leftrightarrow$ (ii). It
remains to prove the equivalence with (iii).

\medskip \noindent
(i) $\Rightarrow$ (iii)
We assume that $r$ is bipolar. Like in the proof of Theorem~\ref{thm:NearlyBipolarReflection}, condition a) follows from Lemma~\ref{lem:Reflection:basic}(ii).

\smallskip

We now prove condition b), by contradiction. Suppose that there is a vertex $v$ and non-empty spherical $I \subset T_{v,r}$ such that $I\cup T^\perp_{v,r}$ separates some $s, t \in S_v$ in the Coxeter diagram of $S_v$. In particular, the group $\la s, t\ra$ is infinite. We set $T=T_{v,r}$.

\begin{claim}
The group $\la s, t\ra$ contains at most one reflection commuting with $r$. This reflection does not equal $r$.
\end{claim}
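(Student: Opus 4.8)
The plan is to reduce the Claim to Lemma~\ref{lem:polygon}, after first showing that $r$ is not itself an element of $\la s, t\ra$. Write $T = T_{v,r}$, and recall that the hypothesis means $s, t \in S_v \setminus (I \cup T^\perp)$ with $I$ a non-empty subset of $T$. Suppose for contradiction that $r \in \la s, t\ra = W_{\{s,t\}}$. Then $\{s,t\}$ is a subset of $S_v$ whose associated parabolic subgroup contains $r$, so by minimality of the support we would have $T = T_{v,r} \subset \{s,t\}$, and hence $\emptyset \neq I \subset \{s,t\}$. This forces $s \in I$ or $t \in I$, contradicting $s, t \notin I$. Thus $r \notin \la s, t\ra$. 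This already yields the second assertion of the Claim: every reflection of $\la s, t\ra$ commuting with $r$ is automatically distinct from $r$, since $r$ is not even an element of $\la s, t\ra$.

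For the first assertion I would pass to the rank-$2$ residue $R$, namely the $\{s,t\}$-residue containing $v$; since $s, t \in S_v$, the walls $\mathcal W_s$ and $\mathcal W_t$ are adjacent to $v$, so $v$ is a vertex of $R$. The reflections whose walls cross $R$ are exactly the reflections of $\la s, t\ra$, so in particular $\mathcal W_r$ does not cross $R$. Now a reflection $\rho \in \la s, t\ra$ commutes with $r$ precisely when $\mathcal W_\rho$ is orthogonal to, or equal to, $\mathcal W_r$; since $\rho \neq r$ by the previous paragraph, this happens exactly when $\mathcal W_\rho$ is orthogonal to $\mathcal W_r$. Hence the number of reflections of $\la s, t\ra$ commuting with $r$ equals the number of walls crossing $R$ that are orthogonal to $\mathcal W_r$.

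To bound this number by one I would invoke Lemma~\ref{lem:polygon}. Because $\la s, t\ra$ is infinite, $R$ is a bi-infinite residue, and as $\mathcal W_r$ does not cross $R$, the function $z \mapsto d(z, \mathcal W_r)$ attains its minimum on the vertices of $R$ at a gate (a single vertex $p$, or a pair of adjacent vertices); taking $p$ together with a suitable pair of neighbours puts us exactly in the situation of Lemma~\ref{lem:polygon}, whose ``in particular'' conclusion asserts that no wall crossing $R$ is orthogonal to $\mathcal W_r$, with the sole possible exception of the wall adjacent to the gate. Consequently at most one reflection of $\la s, t\ra$ commutes with $r$, which is the first assertion. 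The only point requiring care is the clean identification of the walls crossing $R$ with the reflections of $\la s, t\ra$, together with the verification that $\mathcal W_r$ misses $R$; once these are in place the whole statement is read off from the position analysis already encapsulated in Lemma~\ref{lem:polygon}.
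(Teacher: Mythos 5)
There is a genuine gap at the crucial step. Your argument for the first assertion rests on the claim that, because $\mathcal W_r$ does not cross $R$, the function $z \mapsto d(z,\mathcal W_r)$ attains its minimum on $R$ at a gate consisting of a single vertex or a pair of adjacent vertices, so that Lemma~\ref{lem:polygon} applies. This is false in general: it fails exactly when $r$ centralises the infinite dihedral group $\la s,t\ra$. In that situation every reflection of $\la s,t\ra$ commutes with $r$, every wall crossing $R$ is orthogonal to $\mathcal W_r$, and the distance function is constant on each $\la s,t\ra$-orbit of vertices of $R$, hence its minimum set is infinite (picture a $D_\infty\times A_1$ configuration, where $\mathcal W_r$ runs parallel to the whole residue at constant distance). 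Lemma~\ref{lem:polygon} cannot even be invoked then, since its hypothesis requires a neighbour $y$ with $d(v,\mathcal W_r)<d(y,\mathcal W_r)$, and no vertex of $R$ has a strictly farther neighbour in $R$. So your reduction only covers the case in which at most one of the two reflections adjacent to a minimal vertex commutes with $r$; the remaining case is precisely where the conclusion could fail on purely geometric grounds. A telltale sign is that your argument uses only $s,t\notin I$ (to get $r\notin\la s,t\ra$) and never uses $s,t\notin T^\perp$, whereas the claim is false without that part of the hypothesis.

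The paper closes exactly this case by a separate algebraic argument: if both reflections $s',t'$ of $\la s,t\ra$ adjacent to a minimal vertex commute with $r$, then $r$ centralises $\la s',t'\ra=\la s,t\ra$, so by Deodhar's result (\cite[Proposition~5.5]{Deodhar}) $r$ lies in the parabolic subgroup $\la \{s,t\}^\perp\ra$; by minimality of the support this forces $T_{v,r}\subset\{s,t\}^\perp$, i.e.\ $s,t\in T_{v,r}^\perp$, contradicting the fact that $s$ and $t$ are separated by $I\cup T_{v,r}^\perp$ (separated vertices lie outside the separating set). Your proof becomes correct, and essentially identical to the paper's, once you add this dichotomy: either some neighbour of the minimal vertex is strictly farther from $\mathcal W_r$, in which case Lemma~\ref{lem:polygon} applies as you describe, or none is, in which case the centraliser argument above yields a contradiction with the separation hypothesis.
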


In order to establish the claim, we first notice that $r$ does not belong to $\la s, t\ra$. Otherwise we would have $I \subset T \subset \{s,t\}$, which is impossible since neither $s$ nor $t$ belongs to $I$ and $I$ is non-empty.

In particular, the rank-$2$ residue $R$ stabilised by $\la s,
t\ra$ and containing $v$ lies entirely on one side of $\mathcal
W_r$. Let $v'$ be a vertex in $R$ at a minimal distance to
$\mathcal W_r$ ($v'$ might be not uniquely determined) and let
$s'$ and $t'$ denote the two reflections of $\la s,t\ra$ whose
walls are adjacent to $v'$.

If at most one of $s', t'$ commutes with $r$, then by
Lemma~\ref{lem:polygon} this is the only reflection of $\la s', t'\ra = \la s, t\ra$ commuting with $r$, as desired. On the other hand, if $s'$ and $t'$ both commute with $r$, then $r$ centralises $\la s, t\ra$. By \cite[Proposition~5.5]{Deodhar}, this implies that $r$ belongs to the parabolic subgroup $\la \{s, t\}^\perp \ra$. By definition, $T\subset S_v$ is smallest such that $r$ is contained in $W_ T$. We infer that $T$ is contained in $\{s, t\}^\perp$, or equivalently that $s$ and $t$ lie in $T^\perp$. This contradiction ends the proof of the claim.

\medskip

In view of the claim, we are in a position to apply Lemma~\ref{lem:3refl}(ii). It provides for each $k\in \N$ a sequence of reflections $s=r_0, \dots, r_n = t$ such that for all $i =1,
\dots ,n$ the wall $\mathcal W_{r_{i-1}}$ intersects $\mathcal W_{r_i}$ and for all $i = 1, \dots, n-1$, the wall $\mathcal W_{r_i}$ avoids $\mathcal N^X_k(\mathcal W_r)$.
We now consider the $W$-action on the Bass--Serre tree associated
with the splitting of $W$ over $W_{I \cup T^\perp}$ as an amalgamated product of two factors containing $s$ and $t$, respectively. We obtain a contradiction using the exact same arguments as in the proof of Theorem~\ref{thm:NearlyBipolarReflection}((i)$\Rightarrow$(ii)).

\smallskip
It remains to prove condition c), which we also do by contradiction. Assume that there is a vertex $v$ of $X$ such that $T=T_{v,r}$ is spherical, an odd component $O$ of $S_v$ is contained in $T^\perp$ and no pair of elements of $O$ and $S_v\setminus T\cup T^\perp$, respectively, is adjacent. Denote by $\bar O$ the union of $O$ with the set of all elements of $S_v$ adjacent to an element of $O$. Pick any $s\in O$.

By Lemma~\ref{lem:OddComp}, the centraliser $\centra_W(s)$ is contained in $W_{\bar O}$, which is in our case contained in $W_{T \cup T^\perp}$. Then, since $T$ is spherical, the group $\centra_W(s) \cap W_{T^\perp}$ has finite index in $\centra_W(s)$. On the other hand, clearly $W_{T^\perp}$ is contained in $\centra_W(r)$. Therefore, we deduce that $\centra_W(s)$ is virtually contained in $\centra_W(r)$, which implies that $r$ dominates $s$.
Contradiction.

\medskip \noindent
(iii) $\Rightarrow$ (ii)
By Lemma~\ref{lem:shadow}, the set $I=J_{v,r}\cup (T_{v,r} \cap U_{v,r})$ is spherical, for any vertex $v$ of $X$. Hence, by Theorem~\ref{thm:NearlyBipolarReflection}, conditions a) and b) imply that $r$ is nearly bipolar.

It remains to prove that there is no reflection $t\neq r$ dominated by $r$, which we do by contradiction. If there is such a $t$, then let $v$ be a vertex adjacent to $\mathcal W_t$ at maximal possible distance from the wall $\mathcal W_r$. We again set $J = J_{v, r}$,  $T = T_{v,r},$ and $U = U_{v, r}$. We have $t\in U\subset S_v$. To proceed we need the following general remark. Its part (i) requires Lemma~\ref{lem:polygon}.

\begin{rema} Let $s\in S_v$ be adjacent to $t$ and let $m$ denote the order of $st$. Put $v'=(st)^{\lfloor\frac{m}{2}\rfloor}.v$.
\begin{enumerate}[(i)]
\item
For $s\not\in J\cup U$ we have $d(v',\mathcal W_r)>d(v,\mathcal W_r)$ and $v'$ is adjacent to $\mathcal W_t$.
\item
For $s\in U$ we have $d(v',\mathcal W_r)=d(v,\mathcal W_r)$. 
Moreover the canonical bijections between $S_v$, $S$ and $S_{v'}$ yield identifications  $T_{v',r}\cong T,\ J_{v',r}\cong J,$ and $U_{v',r}\cong U$. We denote by $s_0$ the element of $S$ corresponding to $s\in S_v$, \emph{i.e.} such that $v$ and $s.v$ share an edge of type $s_0$. If $m$ is odd, then $v'$ is adjacent
to $\mathcal W_t$ by an edge of type $s_0$.
\end{enumerate}
\end{rema}

The proof splits now into two cases.

\smallskip \noindent \textbf{Case $t \in T^\perp$.} In this case we have $T=J\cup (U\cap T)$, since otherwise $v$ is adjacent to another vertex adjacent to $\mathcal W_t$ farther away from $\mathcal W_r$. Hence $T$ is spherical by Lemma~\ref{lem:shadow}.

By part (i) of the Remark, $t$ is not adjacent to any element
outside $T\cup T^\perp$. In particular, every element $s$
odd-adjacent to $t$ lies in $T^\perp$. Then, by part (ii) of the
remark, we can replace $v$ with $v'$, which replaces in the
Coxeter graph the vertex corresponding to $t$ with the one
corresponding to $s$. Hence the whole odd component of $s$ is
contained in $T^\perp$ and none of its elements is adjacent to a
vertex outside $T\cup T^\perp$. This contradicts condition c).

\smallskip \noindent \textbf{Case $t\not \in T^\perp$.} In this case we set
$$I = J \cup (T \cap U)  \setminus \{t\}.$$
By Lemma~\ref{lem:shadow} the set $I \cup \{t\}$ is spherical, in
particular so is $I$. Observe that $I\cup \{t\}\cup T^\perp$ does
not equal the whole $S_v$. Indeed, otherwise we would have
$S_v=T\cup T^\perp$ with $T = I \cup \{t\}$ spherical which
contradicts condition a).

By condition b) the set $I \cup T^\perp$ does not separate $S_v$.
Therefore, there exists some $s\in S_v \setminus (I \cup T^\perp)$
adjacent to $t$. By part (i) of the Remark this leads to a
contradiction.
\end{proof}

We finish this section with an example of a Coxeter group
all of whose reflections are nearly bipolar, but not all are bipolar.

\begin{example}\label{ex}
Let $(W,S)$ be the Coxeter group associated with the Coxeter diagram represented in Figure~\ref{fig:ex}, where each solid edge is labeled by the Coxeter number $4$, while each dotted edge is labeled by the Coxeter number $2$. In particular, the pair $\{s_2, s_6\}$ is non-spherical.

It follows easily from Theorem~\ref{thm:NearlyBipolarReflection}
that every reflection of $W$ is nearly bipolar. On the other hand,
put $r=s_1$ and let $v_0$ be the identity vertex. Then we have
$T_{v_0,r}=\{s_1\}$. The singleton $\{s_6\}$ is an odd component
contained in $T_{v_0,r}^\perp$. But $s_6$ is not adjacent to the
only element outside $T_{v_0,r}\cup T_{v_0,r}^\perp$, which is
$s_2$. This violates condition c) of
Theorem~\ref{thm:BipolarReflection}(iii). Hence $s_1$ is not
bipolar.

We can see explicitly that Proposition~\ref{prop:refl} fails for
$W$. Consider the subset $S' = \{s'_1, \dots, s'_6\} \subset W$
defined by $s'_i = s_i$ for all $i<6$ and $s'_6 = s_1 s_6$.
Clearly $S'$ is a generating set consisting of involutions.
Moreover each pair $\{s'_i, s'_j\}\subset S'$ satisfies the same
relations as the corresponding pair $\{s_i, s_j\} \subset S$.
Therefore the mapping $s_i \mapsto s'_i$ extends to a well-defined
surjective homomorphism $\alpha \colon W \rightarrow W$. Since $W$
is finitely generated and residually finite, it is Hopfian by
\cite{Malcev}. Thus $\alpha$ is an automorphism and $S'$ is a
Coxeter generating set. But $s'_6$ is not a reflection, 
which confirms that the conclusions of Proposition~\ref{prop:refl} do not hold in this example. 
\end{example}

\begin{figure}[ht]
\includegraphics[width=7cm]{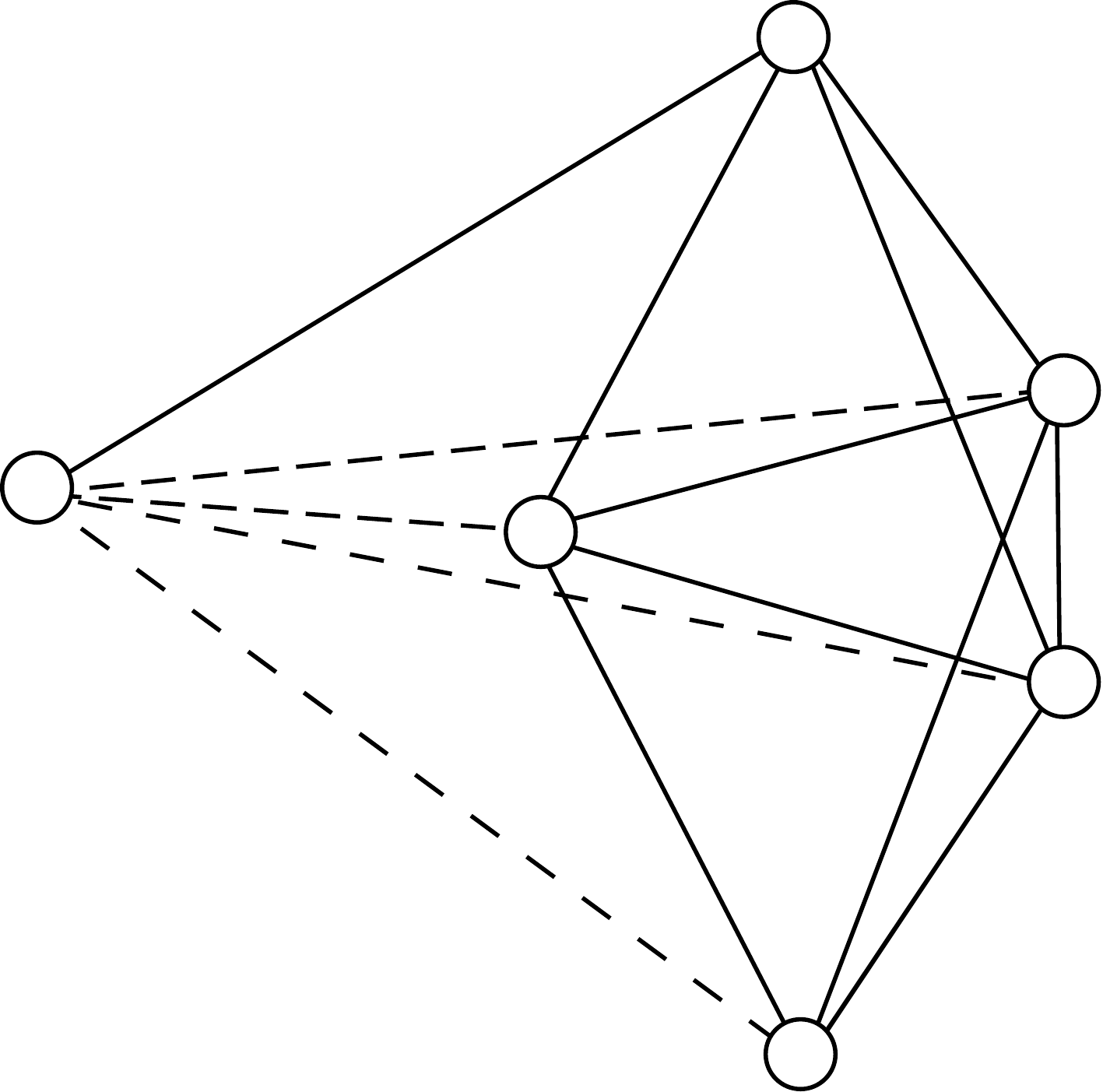}
\put(-215,108){$s_1$}
\put(-45,2){$s_6$}
\put(-45,187){$s_2$}
\put(2,125){$s_3$}
\put(2,70){$s_4$}
\put(-117,108){$s_5$}
\caption{Coxeter diagram for Example~\ref{ex}}
\label{fig:ex}
\end{figure}

\appendix

%%%%%%%%%%%%%%%%%%%%%%%%%%%%%%%%%%%%%%%%%%%%%%%%
%%%%%%%%%%%%%%%%%%%%%%%%%%%%%%%%%%%%%%%%%%%%%%%%
\section{Poles}
\label{App}
%%%%%%%%%%%%%%%%%%%%%%%%%%%%%%%%%%%%%%%%%%%%%%%%
%%%%%%%%%%%%%%%%%%%%%%%%%%%%%%%%%%%%%%%%%%%%%%%%

This appendix is aimed at a discussion of the notion of a pole in a general framework.

\subsection{Poles}

Let $H$ be a subset of a metric space $X$. A \textbf{pole} of $X$
relative to $H$ (or of the pair $(G, H)$) is a chain of the form
$U_1 \supset U_2 \supset \dots$, where $U_k$ is a non-empty
connected component of $X \setminus \mathcal N^X_k(H)$.

\smallskip

A different but equivalent definition of a pole is as follows. Let
$\mathscr{H}$ denote the collection of subsets of $X$ at bounded
Hausdorff distance from $H$ and let $\mathscr P (X)$ be the set of
all subsets of $X$. A pole of $X$ relative to $H$ (or of the pair
$(X, H)$) is a function $U\colon \mathscr{H} \rightarrow \mathscr
P (X)$ satisfying the following two conditions, where $H_1, H_2
\in \mathscr H$:
\begin{itemize}
\item $U(H_1)$ is a non-empty connected component of $X\setminus H_1$.
\item  If  $H_1\subset H_2$, then $U(H_1)\supset U(H_2)$.
\end{itemize}
This equivalent definition makes the following remark obvious.

\begin{rem}\label{rem:olddefin}
Let $H_1,H_2\subset X$ be at finite Hausdorff distance. Then we can identify the poles of $(X,H_1)$ with the poles of $(X,H_2)$.
\end{rem}

We now prove that poles are quasi-isometry invariants.

\begin{lemma}\label{lem:quasiisometry}
Let $X$ and $Y$ be two path-metric spaces and let $f\colon
X\rightarrow Y$ be a quasi-isometry. Then there is a natural
correspondence between the poles of $(X,H)$ and the poles of
$(Y,f(H))$.
\end{lemma}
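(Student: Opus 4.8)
The plan is to construct the correspondence by pushing poles forward along $f$ and along a quasi-inverse $g\colon Y\to X$, and then checking that these two operations are mutually inverse. Fix quasi-isometry constants $(\lambda,\epsilon)$ for $f$, and choose $g$ so that it is also a quasi-isometry and $d_X(g(f(x)),x)\le C$ for all $x$ and some constant $C$; in particular $g(f(H))$ lies at finite Hausdorff distance from $H$, so by Remark~\ref{rem:olddefin} poles of $(X,g(f(H)))$ are identified with poles of $(X,H)$.

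The core technical step, which I expect to be the main obstacle since $f$ need not be continuous, is to show that connectivity far from $H$ is coarsely preserved: there is a function $k\mapsto k'$ with $k'\to\infty$ such that for every connected component $U$ of $X\setminus\mathcal N^X_k(H)$ the image $f(U)$ is contained in a single connected component of $Y\setminus\mathcal N^Y_{k'}(f(H))$. The quasi-isometric lower bound gives $d_Y(f(x),f(H))\ge\frac{1}{\lambda}d_X(x,H)-\epsilon$, so $f(U)$ has depth at least $\frac{1}{\lambda}k-\epsilon$ relative to $f(H)$. To compare components, I would use that a path-metric space is locally path-connected (small balls are path-connected, since the path realising the distance from the centre stays in the ball), hence the open set $X\setminus\mathcal N^X_k(H)$ has open, path-connected components. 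Given $x,x'\in U$, join them by a path inside $U$ and sample it by points $x=p_0,\dots,p_n=x'$ with $d_X(p_i,p_{i+1})\le 1$ and all $p_i\in U$. Then $d_Y(f(p_i),f(p_{i+1}))\le\lambda+\epsilon$, and since $Y$ is a path-metric space these two points are joined by a path of length at most $\lambda+\epsilon+1$; every point of such a path lies within $\lambda+\epsilon+1$ of $f(p_i)$, hence at distance at least $\frac{1}{\lambda}k-\epsilon-(\lambda+\epsilon+1)$ from $f(H)$. Concatenating yields a path from $f(x)$ to $f(x')$ avoiding $\mathcal N^Y_{k'}(f(H))$ for $k'=\lfloor\frac{1}{\lambda}k-2\epsilon-\lambda-1\rfloor$, which tends to infinity with $k$.

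Granting this, I would define the pushforward of a pole $U_1\supset U_2\supset\cdots$ of $(X,H)$ as follows. For each $m$ choose an increasing sequence $k(m)$ large enough that $k'(k(m))\ge m$, and let $V_m$ be the unique connected component of $Y\setminus\mathcal N^Y_m(f(H))$ containing the non-empty set $f(U_{k(m)})$. The nesting $U_{k(m+1)}\subset U_{k(m)}$ gives $f(U_{k(m+1)})\subset f(U_{k(m)})\subset V_m$, and since $Y\setminus\mathcal N^Y_{m+1}(f(H))\subset Y\setminus\mathcal N^Y_m(f(H))$ this forces $V_{m+1}\subset V_m$; thus $V_1\supset V_2\supset\cdots$ is a pole of $(Y,f(H))$. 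Independence of the auxiliary choices follows because any two admissible values of $k(m)$ agree on the image of a common deeper $U_j$. This defines a map $\Phi$ from poles of $(X,H)$ to poles of $(Y,f(H))$, and symmetrically $g$ defines a map $\Psi$ from poles of $(Y,f(H))$ to poles of $(X,g(f(H)))\cong$ poles of $(X,H)$.

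Finally I would check that $\Phi$ and $\Psi$ are mutually inverse. The composite $\Psi\circ\Phi$ sends $(U_k)$ to a pole assembled from the sets $g(f(U_k))$, each lying within Hausdorff distance $C$ of $U_k$. Applying the same short-path argument to the pair $x,\,g(f(x))$, which are $C$-close and both at depth greater than $k-C$ relative to $H$, shows that $g(f(U_k))$ sits in the component of $X\setminus\mathcal N^X_{k-C-1}(H)$ that also contains $U_k$, so the recovered pole coincides with $(U_k)$; the reverse composite $\Phi\circ\Psi$ is handled identically. The only delicate points are the bookkeeping of the depth constants (ensuring they diverge under all the compositions) and the appeals to path-connectedness of the relevant components; the remaining verifications are routine.
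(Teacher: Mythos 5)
Your proposal is correct and takes essentially the same route as the paper: your core technical step is exactly Sublemma~\ref{subl:quasiisometry} (proved there with the same use of the path-metric hypothesis on $Y$, only via chain-connectedness of components rather than your path-connectedness argument), and your pushforward-plus-quasi-inverse construction, using Remark~\ref{rem:olddefin} to identify the poles of $(X,g\circ f(H))$ with those of $(X,H)$, is precisely the paper's proof of Lemma~\ref{lem:quasiisometry}. The only difference is that you spell out the mutual-inverse verification, which the paper leaves to the reader.
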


In order to prove Lemma~\ref{lem:quasiisometry} we will establish the following.

\begin{sublem}\label{subl:quasiisometry}
Let $f\colon X\rightarrow Y$ be a quasi-isometry between a metric space $X$ and a path-metric space $Y$.
Then for each $k\in \N$ there is $K\in \N$ such that for each connected component $\alpha$ of $X \setminus \mathcal N^X_K(H)$, there is a connected component $\alpha'$ of $Y \setminus \mathcal N^Y_k(f(H))$ satisfying
$$f\big( \alpha\big) \subset \alpha'.$$
\end{sublem}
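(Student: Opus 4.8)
The plan is to combine two ingredients: a quantitative estimate controlling how far $f$ can push points towards or away from $f(H)$, and the elementary fact that a connected metric space is chain-connected at every scale. Fix constants $\lambda \geq 1$ and $C \geq 0$ witnessing that $f$ is a quasi-isometry, so that $\frac{1}{\lambda} d_X(x,x') - C \leq d_Y(f(x), f(x')) \leq \lambda d_X(x,x') + C$ for all $x, x' \in X$. The idea is to take $K$ large relative to $k, \lambda, C$, leaving a buffer between the $K$-neighbourhood of $H$ in $X$ and the $k$-neighbourhood of $f(H)$ in $Y$; this buffer both guarantees that $f(\alpha)$ avoids $\mathcal N^Y_k(f(H))$ and gives room to connect images of nearby points by short paths in $Y$ that remain outside $\mathcal N^Y_k(f(H))$.

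First I would record the avoidance estimate. If $x \in \alpha$, then $d_X(x, H) > K$, and for every $h \in H$ the lower quasi-isometry bound gives $d_Y(f(x), f(h)) \geq \frac{1}{\lambda} d_X(x,h) - C > \frac{1}{\lambda} K - C$; taking the infimum over $h$ yields $d_Y(f(x), f(H)) > \frac{1}{\lambda} K - C$. Choosing $K$ so that $\frac{1}{\lambda}K - C > k$ already forces $f(\alpha) \subset Y \setminus \mathcal N^Y_k(f(H))$. It then remains to see that all of $f(\alpha)$ lands in a single connected component.

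For this I would use that, since $\alpha$ is connected, for any $\delta > 0$ any two points of $\alpha$ are joined by a finite $\delta$-chain inside $\alpha$: the classes of the relation ``joined by a $\delta$-chain'' are open, so by connectedness there is only one. Fixing $\delta = 1$, consider consecutive members $z_{i-1}, z_i$ of such a chain, with $d_X(z_{i-1}, z_i) < 1$. The upper quasi-isometry bound gives $d_Y(f(z_{i-1}), f(z_i)) \leq \lambda + C$, so since $Y$ is a path-metric space we may join $f(z_{i-1})$ to $f(z_i)$ by a path of length at most $\lambda + C + 1$. Every point $p$ on this path satisfies $d_Y(p, f(H)) \geq d_Y(f(z_{i-1}), f(H)) - (\lambda + C + 1) > \frac{1}{\lambda}K - C - (\lambda + C + 1)$, which exceeds $k$ once $K > \lambda(k + 2C + \lambda + 1)$. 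With $K$ chosen this large, the path stays outside $\mathcal N^Y_k(f(H))$; concatenating such paths along the chain connects $f(z_0)$ to $f(z_m)$ inside $Y \setminus \mathcal N^Y_k(f(H))$. Hence any two points of $f(\alpha)$ lie in the same component, so $f(\alpha)$ is contained in a single component $\alpha'$, as required.

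The only mildly delicate point is that $f$ is merely a quasi-isometry and need not be continuous, so one cannot transport connectedness directly; the chain argument, together with the path-metric structure of $Y$ used to fill the bounded gaps between images of neighbouring chain points, is precisely what replaces continuity. The heart of the estimate is the balancing of the two neighbourhood scales: choosing $K$ large enough that the margin $\frac{1}{\lambda}K - C$ survives the subtraction of the length $\lambda + C + 1$ of the connecting paths.
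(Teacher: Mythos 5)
Your proof is correct and takes essentially the same approach as the paper's: choose $K$ so large that $f$ maps the complement of $\mathcal N^X_K(H)$ into $Y$ with a definite margin beyond $\mathcal N^Y_k(f(H))$, replace the missing continuity of $f$ by chain-connectedness of the component $\alpha$ (valid in any connected metric space), and use the path-metric hypothesis on $Y$ to fill the bounded gaps between images of consecutive chain points by short paths which the margin keeps outside $\mathcal N^Y_k(f(H))$. The only differences from the paper are the explicit constants and that the paper bounds the distance of a point on a filling path to the \emph{nearer} endpoint, while you bound it by the full path length; both yield the required estimate.
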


Before we provide the proof of Sublemma~\ref{subl:quasiisometry}, we show how to use it in the proof of the lemma.

\begin{proof}[Proof of Lemma~\ref{lem:quasiisometry}]
Let $V_1 \supset V_2 \supset \dots$ be a pole of the pair $(X,H)$.
We define its corresponding pole $U_1 \supset U_2 \supset \dots$
of $(Y,f(H))$. By Sublemma~\ref{subl:quasiisometry}, for each
$k\in \N$ there is a component $U_k$ of $Y \setminus \mathcal
N^Y_k(f(H))$ which contains the $f$-image of some $V_{K(k)}$.
Since all $V_{K(k)}$ intersect, for $k'>k$ we have $U_k\supset
U_{k'}$. Thus $U_1 \supset U_2 \supset \dots$ is a pole. Hence we
have a mapping $f'$ from the collection of poles of $(X,H)$ to the
collection of poles of $(Y,f(H))$. We now prove that $f'$ is a
bijection.

Let $g\colon Y \rightarrow X$ be a quasi-isometry which is
quasi-inverse to $f$. Let $g'$ be the map induced by $g$ which
maps the collection of poles of $(Y,f(H))$ to the collection of
poles of $(X,g\circ f(H))$. The sets $H$ and $g\circ f(H)$ are at
finite Hausdorff distance and by Remark~\ref{rem:olddefin} we can
identify the poles of $(X,g\circ f(H))$ with the poles of $(X,H)$.
We leave it to the reader to verify that $f'\circ g'$ and $g'\circ
f'$ are the identity maps. Thus $f'$ is a bijection.
\end{proof}

It remains to prove the sublemma.

\begin{proof}[Proof of Sublemma~\ref{subl:quasiisometry}]
We need the following terminology. Given $k\in \N$, a sequence $(x_0, \dots, x_n)$ of points in $X$ is called a \textbf{$k$-path} if the
distance between any two consecutive $x_i$'s is at most $k$. A subset
$Z \subset X$ is called \textbf{$k$-connected} if any two elements
of $Z$ may be joined by some $k$-path entirely contained in $Z$.

Let $c$ and $L$ be the additive and the multiplicative constants of the quasi-isometry $f$. Put $K=L(k+L+2c)$. Then $f(X\setminus \mathcal N^X_K(H))$ is contained in $Y\setminus \mathcal N^Y_{k+L+c}(f(H))$.

Let $\alpha$ be a connected component of $X\setminus \mathcal N^X_K(H)$. The quasi-isometry $f$ maps $\alpha$, which is $1$-connected, to an $(L+c)$-connected subset of $ Y\setminus \mathcal N^Y_{k+L+c}(f(H))$. Any pair of points at distance $L+c$ in $Y\setminus \mathcal N^Y_{k+L+c}(f(H))$ is connected by a path in $Y$ of length at most $2(L+c)$ (here we use the hypothesis that $Y$ is a path-metric space). This path has to lie in $Y\setminus \mathcal N^Y_k(f(H))$. Hence the points of any connected component $\alpha$ of $X\setminus \mathcal N^X_K(H)$ are mapped into a single connected component of $Y\setminus \mathcal N^Y_k(f(H))$.
\end{proof}

We conclude with the following alternative characterisation of
poles. A subset of $X$ is called \textbf{$H$-essential} if it is
not contained in any tubular neighbourhood of $H$.

\begin{lemma}\label{lem:essential components}
\begin{enumerate}[(i)]
\item Suppose that the number of poles of $(X,H)$ is finite and equals $n$. Then for $k$ sufficiently large the number of connected $H$-essential components of the space $X \setminus \mathcal N^X_k(H)$ is exactly $n$.
\item
On the other hand, if the number of poles of $(X,H)$ is infinite, then for $k$ sufficiently large the number of connected $H$-essential components of the space $X \setminus \mathcal N^X_k(H)$ is arbitrarily large.
\end{enumerate}
\end{lemma}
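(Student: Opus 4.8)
The plan is to organize all the components of the complements $X\setminus \mathcal N^X_k(H)$ into a single inclusion structure. For $k<k'$ every connected component of $X\setminus \mathcal N^X_{k'}(H)$ is contained in a unique component of $X\setminus \mathcal N^X_k(H)$, so the components at all levels form a forest $T$ in which the children of a level-$k$ component $U$ are the level-$(k+1)$ components contained in $U$. By definition a pole is exactly an infinite descending chain $U_1\supset U_2\supset\cdots$, that is, an infinite ray in $T$ issued from level $1$. The first point I would record is that every term $U_k$ of a pole is $H$-essential: since each later $U_{k'}$ is non-empty and contained in $X\setminus \mathcal N^X_{k'}(H)$, the component $U_k$ meets the complement of $\mathcal N^X_{k'}(H)$ for every $k'$, hence is not contained in any tubular neighbourhood of $H$. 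Thus poles are precisely infinite rays in the sub-forest of $T$ spanned by the $H$-essential components, and for each $k$ there is a map $\Phi_k$ sending a pole to its level-$k$ term, an $H$-essential component.

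The elementary remark driving both parts is that if two distinct poles have the same level-$k$ term they must split at some later level, while once their level-$k$ terms differ they have disjoint (hence distinct) terms at every higher level. For part (ii), assume there are infinitely many poles and fix any $N$; choose $N$ of them, let $K$ be a level by which all $\binom N2$ pairs have split, and observe that for $k\ge K$ their images under $\Phi_k$ are $N$ pairwise distinct $H$-essential components. Hence the number of $H$-essential components of $X\setminus\mathcal N^X_k(H)$ is at least $N$ for all large $k$, and as $N$ is arbitrary this number grows without bound. This direction uses nothing beyond the splitting remark.

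For part (i), with exactly $n$ poles, the same splitting argument applied to all $n$ poles yields a level $K_1$ beyond which $\Phi_k$ is injective, giving the lower bound of $n$ essential components. The content is the matching upper bound, namely that for $k\ge K_1$ every $H$-essential component is of the form $\Phi_k(\text{pole})$. Here I would show that any $H$-essential component $U$ at level $k$ carries a pole: the subtree of $T$ formed by all components contained in $U$ is infinite, because $U\setminus\mathcal N^X_{k'}(H)$ is non-empty for every $k'\ge k$ and so $U$ has descendants at every level; an infinite ray through $U$, completed downwards by the unique ancestors of $U$ (which are essential since they contain $U$), is then a pole with $\Phi_k(\,\cdot\,)=U$. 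Together with injectivity this makes $\Phi_k$ a bijection for $k\ge K_1$, so the number of $H$-essential components is exactly $n$.

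Extracting that ray via K\"onig's lemma is the step I expect to be the main obstacle, since it is not automatic that $U$ carries an infinite ray: a priori an $H$-essential component could shatter into infinitely many bounded components one level higher, so that the subtree below $U$, though infinite, has no infinite branch. To apply K\"onig's lemma I must know the subtree is finitely branching, i.e. that each $H$-essential component has only finitely many $H$-essential children, and this is precisely where the geometry of $X$ must be invoked. In the spaces of interest---the locally finite Cayley graph, or a proper space in which the neighbourhoods $\mathcal N^X_k(H)$ interact with $H$ in a cobounded fashion---the collar $\mathcal N^X_{k+1}(H)\setminus \mathcal N^X_k(H)$ is controlled, and a single component can be adjacent to only finitely many offspring that escape arbitrarily far from $H$. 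Granting this finite-branching property, K\"onig's lemma supplies the required ray and completes part (i).
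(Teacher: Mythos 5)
Your forest-of-components framework, the observation that every term of a pole is automatically $H$-essential, the splitting remark, and the resulting proofs of part (ii) and of the lower bound in part (i) are all correct (for what it is worth, the paper offers no proof to compare against: it leaves this lemma as an exercise). The genuine gap is exactly the step you defer: for the upper bound in (i) you need every $H$-essential component to carry a pole, you propose to extract the pole by K\"onig's lemma, and you then \emph{grant} the required finite-branching of the component tree instead of proving it. That property is false, even in the nicest setting this lemma is applied to in the paper. Take $G=\mathbb{Z}^2*\mathbb{Z}=\la a,b\mid [a,b]\ra *\la c\ra$ with its Cayley graph $X$ and $H=\la a\ra$ a cyclic subgroup. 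The component $U$ of $X\setminus\mathcal N^X_k(H)$ containing the upper half-grid $\{a^xb^y\colon y>k\}$ also contains, for every $x\in\mathbb{Z}$, the $c$-branches attached at $a^xb^{k+1}$; at level $k+1$ each such branch becomes a separate component, since every path from it to the rest of $U$ passes through the deleted vertex $a^xb^{k+1}$. So $U$ has infinitely many children, and all of them are $H$-essential (the branch contains $a^xb^{k+1}cb^M$, at distance $k+2+M$ from $H$). Hence K\"onig's lemma cannot be run on the component tree, and no appeal to local finiteness, properness, or a ``controlled collar'' will restore finite branching.

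The deeper point is that the implication ``essential $\Rightarrow$ contains a pole'' is not a consequence of the stated hypotheses at all, so your gap cannot be closed without importing extra structure. Indeed the lemma itself fails for arbitrary subsets $H$ of locally finite graphs: glue onto a base ray (placed inside $H$) a sequence of gadgets $Y_1,Y_2,\dots$, where $Y_i$ consists of a ray $H_i\subset H$, a corridor of vertices at distance exactly $i$ from $H$ running parallel to $H_i$, and, at the $m$-th corridor vertex, a finite path of length $m$ climbing away from $H$. This graph is locally finite, the pair $(X,H)$ has no poles at all (any descending chain is eventually trapped in a single finite climbing path), yet for every $k$ each gadget $Y_i$ with $i>k$ contributes an $H$-essential component of $X\setminus\mathcal N^X_k(H)$, so part (i) fails with $n=0$. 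What rescues the lemma in the paper's applications is that there $H$ is a subgroup (a centraliser) or lies at bounded Hausdorff distance from one (a wall), and a correct proof must use this symmetry rather than finite branching. Concretely: every component of $X\setminus\mathcal N^X_k(H)$ contains a vertex at distance exactly $k+1$ from $H$, and the set of such vertices is $H\cdot F$ with $F$ finite; given an essential component $U$, choose $x_j\in U$ with $d(x_j,H)>j$ and geodesics from $x_j$ to nearest points of $H$, along which $d(\cdot,H)$ drops by exactly $1$ per step; translate these by elements of $H$ so that they all cross the sphere $\{d(\cdot,H)=k+1\}$ in the same point $f\in F$; K\"onig's lemma applied to these recentred geodesic segments---not to the component tree---yields an infinite ray from $f$ along which $d(\cdot,H)$ increases by $1$ at each step; the components containing its tails form a pole through the component of $f$, and translating back by an element of $H$ gives a pole whose level-$k$ term is $U$. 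This proves ``essential $\Rightarrow$ pole-carrying'', hence at most $n$ essential components at every level, which together with your lower bound yields (i).
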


We leave the proof as an exercise to the reader.

\subsection{Poles as topological ends}
\label{sec:topological ends}

It is natural to ask if the poles of $(X, H)$ may be identified with the topological ends of a certain space. Below we construct such a topological space $X_{\widehat H}$ which, as a set, coincides with the disjoint union of $X$ together with one additional point, denoted by $\infty$. The topology on $X_{\widehat H}$ is defined in the following way. First, we declare that the embedding $X \to X_{\widehat H}$ is continuous and open. Second, we define neighbourhoods of $\infty$ to be complements of those subsets of $X$ which intersect every tubular neighbourhood of $H$ in a bounded subset. In particular, if $H$ is bounded, then $\infty$ is an isolated point.

If $X$ is locally compact, there is an alternative approach. For each $k\in \N$ there is a natural continuous embedding
$$\widehat{\mathcal N^X_k(H)} \to X_{\widehat H},$$
where we denote by $\widehat Z$ the one-point compactification of a space $Z$.
In view of this, the space $X_{\widehat H}$ can be alternatively defined as the direct limit of the injective system given by the natural continuous embeddings $\big\{\widehat{\mathcal N^X_k(H)} \to \widehat{ \mathcal N^X_{k'}(H)}\big\}_{k<k'}.$

\begin{lemma}\label{lem:compact}
For any compact subset $Q \subset X_{\widehat H}$, the intersection $X \cap Q$ is contained in some tubular neighbourhood of $H$.
\end{lemma}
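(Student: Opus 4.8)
The plan is to argue by contradiction, building a nested increasing open cover of $X_{\widehat H}$ that fails to have a finite subcover once we assume the conclusion is false. First I would suppose that $X \cap Q$ is \emph{not} contained in any tubular neighbourhood $\mathcal N^X_k(H)$. This means precisely that the function $d(\cdot, H)$ is unbounded on $X \cap Q$, so I can choose a sequence of points $x_1, x_2, \dots \in X \cap Q$ with $d(x_k, H)$ strictly increasing and $d(x_k, H) > k$ for every $k$; in particular the $x_k$ are pairwise distinct.

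Next, for each $n \in \N$ I set $A_n = \{x_k : k \geq n\}$ and consider $U_n = X_{\widehat H} \setminus \overline{A_n}$, where the closure is taken in $X$. The heart of the argument is checking that each $U_n$ is an open neighbourhood of $\infty$. For this I note that $A_n \cap \mathcal N^X_m(H)$ is finite for every $m$, since $d(x_k, H) > k$ forces $k < m$; hence it is bounded. A short argument upgrades this to $\overline{A_n} \cap \mathcal N^X_m(H)$: any point of this intersection lying outside a small neighbourhood of the finite set $A_n \cap \mathcal N^X_{m+1}(H)$ would be a limit of points $x_k$ with $d(x_k, H) > m+1$, each at distance more than $1$ from it, which is impossible. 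Thus $\overline{A_n}$ meets every tubular neighbourhood of $H$ in a bounded set, so $U_n$ is a neighbourhood of $\infty$ by definition, and since $\overline{A_n}$ is closed in $X$ this neighbourhood is genuinely open. When $X$ is locally compact the sets $A_n$ are already closed and no passage to closures is needed.

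Finally I would observe that $\{U_n\}$ is nested increasing and covers $X_{\widehat H}$. Indeed $\bigcap_n \overline{A_n}$ is empty: a point of this intersection would be a limit in $X$ of points $x_k$ with $k \to \infty$, forcing $d(\cdot, H) = \infty$, which cannot happen for a point of $X$; and $\infty$ lies in each $U_n$ since $\overline{A_n} \subset X$. As $Q$ is compact and the cover is increasing, some single $U_N$ already contains $Q$. But $x_N \in A_N \subset \overline{A_N}$, so $x_N \notin U_N$, whereas $x_N \in X \cap Q \subset Q \subset U_N$, a contradiction. Hence $X \cap Q$ must lie in some $\mathcal N^X_k(H)$, as claimed. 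The step I expect to be the main obstacle is the verification that the $U_n$ are open, i.e.\ controlling the interaction between the metric and the nonstandard topology at $\infty$ (that $\overline{A_n}$ stays bounded on each tubular neighbourhood and is closed in $X$); everything else is routine, and this step trivialises in the locally compact case that is relevant to the applications.
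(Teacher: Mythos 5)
Your proof is correct, and it shares its germ with the paper's: both start from a sequence $(x_k)$ in $X \cap Q$ with $d(x_k,H) > k$, and both exploit the key fact that such a set meets each tubular neighbourhood $\mathcal N^X_m(H)$ in a finite (hence bounded) set, so that its complement is a neighbourhood of $\infty$. Where you diverge is in how compactness is invoked. The paper finishes in one stroke with cluster points: $(x_k)$ cannot accumulate in $X$ (every subsequence leaves all tubular neighbourhoods, hence is unbounded), and cannot accumulate at $\infty$ (the complement of $\{x_k\}_k$ is a neighbourhood of $\infty$ missing every term), so $Q$ contains a sequence with no cluster point and is not compact. You instead run the open-cover definition directly, building the nested increasing cover $U_n = X_{\widehat H} \setminus \overline{A_n}$ and showing that no single $U_N$ can contain $Q$. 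Your route is longer, but what it buys is self-containedness: it needs only the definition of compactness, whereas the paper's one-liner tacitly uses that compact spaces are countably compact; moreover it confronts the genuine subtlety here, namely that the declared neighbourhoods of $\infty$ need not be open, so one must manufacture open ones --- which is exactly what your closure step and the boundedness check for $\overline{A_n} \cap \mathcal N^X_m(H)$ accomplish. One simplification: the closures are unnecessary, since $A_n$ has no accumulation points in $X$ whatsoever (near any $y \in X$ there are only finitely many $x_k$, because $d(x_k,y) \geq d(x_k,H) - d(y,H) \to \infty$), so $A_n$ is already closed in \emph{any} metric space; your remark reserving this for the locally compact case undersells it.
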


\begin{proof}
Let $Q \subset X_{\widehat H}$ be a subset which contains a sequence $(x_k)$ of $X$ such that $x_k$ does not belong to $\mathcal N^X_k(H)$. Clearly $(x_k)$ is unbounded in $X$. Moreover, the complement of the set $\{x_k\}_k$ is a neighbourhood of $\infty$, so that $(x_k)$ does not sub-converge to $\infty$ in $X_{\widehat H}$. This implies that $Q$ is not compact.
\end{proof}

Lemma~\ref{lem:compact} implies that a sequence $(x_k)$ in $X$ converges to $\infty$ if and only if it leaves every bounded subset of $X$ but remains in some tubular neighbourhood of $H$. The lemma also immediately implies the following.

\begin{prop}\label{prop:DefBipolar}
There is a natural correspondence between the poles of $(X,H)$ and the topological ends of $X_{\widehat H}$.
\end{prop}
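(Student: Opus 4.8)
The plan is to identify both the poles and the ends with the same inverse limit. Recall that the \emph{topological ends} of a locally compact Hausdorff space $Y$ are by definition the elements of the inverse limit $\varprojlim_{Q} \pi_0(Y \setminus Q)$, taken over the compact subsets $Q \subset Y$ directed by inclusion, where for $Q \subseteq Q'$ the bonding map $\pi_0(Y \setminus Q') \to \pi_0(Y \setminus Q)$ sends a connected component of $Y \setminus Q'$ to the unique component of $Y \setminus Q$ containing it. I would apply this with $Y = X_{\widehat H}$.

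First I would single out the family of compact sets $Q_k := \widehat{\mathcal N^X_k(H)} = \mathcal N^X_k(H) \cup \{\infty\}$, for $k \in \N$. Each $Q_k$ is compact, being exactly the one-point compactification appearing in the direct-limit description of $X_{\widehat H}$ (this is where local compactness of $X$ enters). That $\{Q_k\}_k$ is cofinal among all compact subsets of $X_{\widehat H}$ is precisely the content of Lemma~\ref{lem:compact}: given any compact $Q$, the lemma produces $k$ with $X \cap Q \subset \mathcal N^X_k(H)$, and since $\infty \in Q_k$ this forces $Q \subseteq Q_k$. As an inverse limit may be computed over any cofinal subsystem, the ends of $X_{\widehat H}$ are thereby identified with $\varprojlim_k \pi_0(X_{\widehat H} \setminus Q_k)$.

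Next I would compute the terms of this cofinal subsystem. As a set, $X_{\widehat H} \setminus Q_k = X \setminus \mathcal N^X_k(H)$; and since $X \hookrightarrow X_{\widehat H}$ is an open embedding inducing the original topology on $X$, and this complement avoids $\infty$, the equality is a homeomorphism onto the subspace $X \setminus \mathcal N^X_k(H)$ of $X$. Hence $\pi_0(X_{\widehat H} \setminus Q_k) = \pi_0(X \setminus \mathcal N^X_k(H))$, with bonding map $\pi_0(X \setminus \mathcal N^X_{k+1}(H)) \to \pi_0(X \setminus \mathcal N^X_k(H))$ sending each component to the one containing it. Unwinding the definition of the inverse limit, an element is a choice of connected component $U_k$ of $X \setminus \mathcal N^X_k(H)$ for every $k$ with $U_{k+1} \subset U_k$, that is, a chain $U_1 \supset U_2 \supset \cdots$ of non-empty connected components, which is exactly a pole of $(X,H)$. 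This identification is manifestly compatible with the bonding maps, so it is natural, completing the argument.

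The only genuine points to check are the compactness of each $Q_k$ and the cofinality of the family $\{Q_k\}$: the former follows from the one-point-compactification description of $X_{\widehat H}$, and the latter is handed to us directly by Lemma~\ref{lem:compact}. Everything else is a routine unwinding of the two definitions, both of which reduce to the single inverse system $\big(\pi_0(X \setminus \mathcal N^X_k(H))\big)_k$.
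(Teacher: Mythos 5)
Your proof is correct and takes essentially the same route as the paper: the paper obtains the proposition as an immediate consequence of Lemma~\ref{lem:compact} together with the compactness of the sets $\widehat{\mathcal N^X_k(H)} \subset X_{\widehat H}$ coming from the one-point-compactification description, which is exactly the cofinality argument you spell out. The only difference is that you make explicit the inverse-limit bookkeeping identifying both the ends of $X_{\widehat H}$ and the poles of $(X,H)$ with $\varprojlim_k \pi_0\big(X \setminus \mathcal N^X_k(H)\big)$, details the paper leaves to the reader.
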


\subsection{Poles in groups}

Let now $G$ be a finitely generated group and let $X$ denote the Cayley graph associated with some finite generating set for $G$. We view $X$ as a path-metric space with edges of length~$1$. We identify $G$ with the $0$-skeleton $X^{(0)}$ of $X$. Let $H$ be a subset of $G$.

We recall that if $H$ is a subgroup, then $e(G, H)$ denotes the number of \textbf{relative ends} of $G$ with respect to $H$, which are the topological ends of the quotient space $H\backslash X$. This invariant was first introduced by Houghton~\cite{Houghton} and Scott~\cite{Scott} and is independent of the choice of a generating set for $G$.

On the other hand, we define a \textbf{pole} (in $X$) of $G$ relative to $H$ (or of the pair $(G, H)$) to be a pole of $(X,H)$. By Lemma~\ref{lem:quasiisometry}, there is a correspondence between the collections of poles of the pair $(G,H)$ determined by different generating sets. Hence we can speak about the number of poles of $(G,H)$, which we denote by $\tilde e(G, H)$. Here $H$ is allowed to be any subset of $G$.

By Proposition~\ref{prop:DefBipolar}, we have a correspondence between the poles of $(X,H)$ and the ends of the space $X_{\widehat H}$. In particular, by Lemma~\ref{lem:quasiisometry}, there is natural correspondence between the ends of $X_{\widehat H}$ and the ends of $Y_{\widehat H}$, where $Y$ is the Cayley graph of $G$ with respect to a different generating set.

Our notation $\tilde e(G, H)$ for the number of poles coincides with the notation of
Kropholler and Roller~\cite{KR}. Their definition goes as follows.

Let $\mathscr P G$ denote the set of all subsets of $G$ and $\mathscr F_H G$ the collection of all subsets of $G$ contained in $HF$ for some finite subset $F$ of $G$. Notice that an element of $\mathscr F_H G$ is nothing but a subset of $G$ lying in some tubular neighbourhood of $H$ in the Cayley graph. We view $\mathscr P G$ and $\mathscr F_H G$ as vector spaces over the field $\FF_2$ of order two.

The action of $G$ on itself by right multiplication preserves both $\mathscr P G$ and  $\mathscr F_H G$; they can thus be viewed as right $G$-modules over $\FF_2$. Kropholler and Roller set
\begin{equation}
\label{Krop}
\tilde e(G, H) = \dim_{\FF_2} (\mathscr P G/\mathscr F_H G)^G.
\end{equation}
See also Geoghegan \cite[Section IV.14]{G} for a similar definition of this value, which is called there the number of \emph{filtered ends}. We end the appendix by establishing the following.

\begin{lemma}\label{lem:tildeecoincides}
The number of poles of $(G,H)$ coincides with the value $\tilde e(G, H)$ defined by the formula~(\ref{Krop}).
\end{lemma}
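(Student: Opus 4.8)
The plan is to exhibit a natural bijection between the set of poles of $(G,H)$ and a basis of the $\FF_2$-vector space $(\mathscr P G/\mathscr F_H G)^G$, thereby matching the two cardinalities. First I would unwind the definition of the $G$-invariant quotient: an element of $(\mathscr P G/\mathscr F_H G)^G$ is represented by a subset $A\subset G$ such that for every $g\in G$ the symmetric difference $A\triangle Ag$ lies in $\mathscr F_H G$, i.e.\ is contained in some tubular neighbourhood of $H$; and two such subsets represent the same class precisely when they differ by an element of $\mathscr F_H G$, i.e.\ agree outside a tubular neighbourhood of $H$. The condition that $A\triangle Ag \in \mathscr F_H G$ for all generators $g$ (equivalently, for all $g\in G$) says exactly that the ``boundary'' of $A$ in the Cayley graph stays within a bounded neighbourhood of $H$: outside a large enough $\mathcal N^X_k(H)$, the set $A$ is a union of connected components of $X\setminus \mathcal N^X_k(H)$.

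The heart of the argument is to turn such an ``almost invariant'' set $A$ into a pole and conversely. Given a pole $U_1\supset U_2\supset\cdots$, I would associate to it the class of the subset $\bigcup_k U_k$ (or any $U_{k_0}$, which differs from it only within a tubular neighbourhood of $H$, hence represents the same class modulo $\mathscr F_H G$); using the fact that $G$ acts by isometries and preserves $H$ up to finite Hausdorff distance, the symmetric differences under the $G$-action are supported near $H$, so this yields a well-defined element of the invariant quotient. Conversely, given a class represented by $A$ with bounded boundary, for each $k$ large the intersection $A\cap(X\setminus \mathcal N^X_k(H))$ is a union of connected components of $X\setminus \mathcal N^X_k(H)$; I would need to single out from these a coherent nested chain of $H$-essential components to produce a pole, appealing to Lemma~\ref{lem:essential components} to control the number of $H$-essential components as $k$ grows. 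The key point is that the $H$-essential components are exactly the ``unbounded directions'' that $A$ can follow, and these are precisely what poles record.

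The main technical step, and the one I expect to be the chief obstacle, is verifying that these two assignments are mutually inverse $\FF_2$-linear bijections, and in particular that distinct poles give $\FF_2$-linearly independent classes while every class decomposes as a sum of pole-classes. Concretely, I would show that if $U^{(1)},\dots,U^{(n)}$ are the distinct poles, then the corresponding classes $[A^{(1)}],\dots,[A^{(n)}]$ form a basis: linear independence follows because a nontrivial $\FF_2$-combination corresponds to the symmetric difference of the associated subsets, which by Lemma~\ref{lem:essential components} remains $H$-essential (hence is not in $\mathscr F_H G$) whenever the combination is nonempty; spanning follows because any almost-invariant $A$ is, outside a large tubular neighbourhood, a union of $H$-essential components, each of which belongs to a unique pole. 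Since both the count of poles and $\dim_{\FF_2}(\mathscr P G/\mathscr F_H G)^G$ are thereby identified with the number of $H$-essential components for large $k$ (finite case) or are simultaneously infinite (the remaining case of Lemma~\ref{lem:essential components}), the two invariants coincide. The bookkeeping of the ``bounded boundary'' condition and the verification that the bijection is genuinely independent of the chosen threshold $k$ are the places where care is required.
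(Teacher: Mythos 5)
Your overall strategy --- reducing both invariants to the number of $H$-essential components of $X \setminus \mathcal N^X_k(H)$ for large $k$, via the bounded-boundary description of $(\mathscr P G/\mathscr F_H G)^G$ and Lemma~\ref{lem:essential components} --- is the same as the paper's, but your construction of the pole-to-class map contains a genuine error. For a pole $U_1 \supset U_2 \supset \cdots$ the union $\bigcup_k U_k$ is simply $U_1$, and it is \emph{not} true that every $U_{k_0}$ represents the same class modulo $\mathscr F_H G$. Take $X$ to be the $4$-valent tree (the Cayley graph of the free group on two generators) and $H = \{1\}$, so that poles are ends. Two ends whose rays from the identity agree up to distance $2$ but diverge afterwards determine \emph{distinct} poles with the \emph{same} $U_1$; for either of them, $U_1 \setminus U_2$ contains the entire essential branch following the other end, so $[U_1] \neq [U_2]$ in $\mathscr P G/\mathscr F_H G$. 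Hence your assignment is not well defined; and if one resolves the ambiguity by always taking $U_1$, as ``$\bigcup_k U_k$'' dictates, then the two distinct poles above receive the same class, so your family $[A^{(1)}], \dots, [A^{(n)}]$ contains repetitions and is not $\FF_2$-independent (the sum of two equal vectors vanishes). In the infinite-pole case matters are worse: the classes $[U_k]$ need never stabilise in $k$, so no class at all is canonically attached to a pole and the notion of a basis indexed by poles breaks down.

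The repair is exactly what the paper does, and it removes the need for any pole-by-pole assignment: fix $k$ and work with the classes of the $H$-essential components of $X \setminus \mathcal N^X_k(H)$ themselves, proving two inequalities instead of exhibiting a bijection. If there are at least $n$ poles, Lemma~\ref{lem:essential components} provides a $k$ for which there are at least $n$ essential components; being pairwise disjoint, non-empty and essential, their classes are $\FF_2$-independent, so $\tilde e(G,H) \geq n$ in the sense of formula~(\ref{Krop}). Conversely, given independent classes $v_1, \dots, v_n$, the boundaries $\partial V_i$ of representing sets lie at finite Hausdorff distance from $H$, so for suitable $k$ each $v_i$ lies in the span of the classes of the essential components of $X \setminus \mathcal N^X_k(H)$, whence $n$ is at most the number of such components, which is at most the number of poles. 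This formulation is insensitive to the choice of representatives and of $k$, and it covers the infinite case automatically --- precisely the two points where your bijection founders. The remaining ingredients of your proposal (the unwinding of the invariant quotient as almost-invariant sets with boundary near $H$, disjointness giving independence, and the decomposition of an almost-invariant set into components outside a neighbourhood of $H$) are correct and coincide with the paper's.
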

\begin{proof}
If the number of poles of $(G,H)$ is at least $n$, then there is $k\in \N$ such that $X\setminus \mathcal N_k^X(H)$ has at least $n$ connected $H$-essential components (see Lemma~\ref{lem:essential components}).
The set of vertices of each such component determines a non-trivial vector of $(\mathscr P G/\mathscr F_H G)^G$. Moreover, the collection of all these vectors is linearly independent. This implies $\tilde e(G, H)\geq n$.

Conversely, let $v_1,\ldots, v_n$ be linearly independent vectors in $(\mathscr P G/\mathscr F_H G)^G$. Let $V_i$ be the subset of $X^{(0)}$ determined by $v_i$. Denote by $\partial V_i$ the set of all the vertices outside $V_i$ which are adjacent to some vertex in $V_i$. Then all $\partial V_i$ are at finite Hausdorff distance from $H$. Choose $k\in \N$ so that $\mathcal N_k^X(H)$ contains all $\partial V_i$. Then each $v_i$ lies in the linear subspace of $(\mathscr P G/\mathscr F_H G)^G$ determined by the connected $H$-essential components of $X\setminus \mathcal N_k^X(H)$.
Hence $n$ is bounded by the number of connected $H$-essential components of $X\setminus \mathcal N_k^X(H)$, which equals at most $\tilde e(G, H)$.
\end{proof}

\begin{bibdiv}
\begin{biblist}

\bib{BH}{article}{
   author={Brink, B.},
   author={Howlett, R. B.},
   title={A finiteness property and an automatic structure for Coxeter
   groups},
   journal={Math. Ann.},
   volume={296},
   date={1993},
   number={1},
   pages={179--190}
   }

\bib{Cap_AGT}{article}{
   author={Caprace, P.-E.},
   title={Conjugacy of 2-spherical subgroups of Coxeter groups and parallel
   walls},
   journal={Algebr. Geom. Topol.},
   volume={6},
   date={2006},
   pages={1987--2029 (electronic)},
   }

\bib{CM}{article}{
   author={Caprace, P.-E.},
   author={M{\"u}hlherr, B.},
   title={Reflection rigidity of 2-spherical Coxeter groups},
   journal={Proc. Lond. Math. Soc. (3)},
   volume={94},
   date={2007},
   number={2},
   pages={520--542},
   }

\bib{TwistRigid}{article}{
   author={Caprace, P.-E.},
   author={Przytycki, P.},
   title={Twist-rigid Coxeter groups},
   date={2009},
   status={submitted}
   eprint={arXiv:0911.0354},
   }

\bib{CharneyDavis}{article}{
    AUTHOR = {Charney, R.},
    author = {Davis, M.},
     TITLE = {When is a {C}oxeter system determined by its {C}oxeter group?},
   JOURNAL = {J. London Math. Soc. (2)},
  FJOURNAL = {Journal of the London Mathematical Society. Second Series},
    VOLUME = {61},
      YEAR = {2000},
    NUMBER = {2},
     PAGES = {441--461},
     }

\bib{Davis}{article}{
   author={Davis, M. W.},
   title={The cohomology of a Coxeter group with group ring coefficients},
   journal={Duke Math. J.},
   volume={91},
   date={1998},
   number={2},
   pages={297--314},
   }

\bib{Deodhar}{article}{
   author={Deodhar, V. V.},
   title={On the root system of a Coxeter group},
   journal={Comm. Algebra},
   volume={10},
   date={1982},
   number={6},
   pages={611--630},
   }

\bib{Deodhar_refl}{article}{
   author={Deodhar, V. V.},
   title={A note on subgroups generated by reflections in Coxeter groups},
   journal={Arch. Math. (Basel)},
   volume={53},
   date={1989},
   number={6},
   pages={543--546},
   }

\bib{FHM}{article}{
   author={Franzsen, W. N.},
   author={Howlett, R. B.},
   author={M{\"u}hlherr, B.},
   title={Reflections in abstract Coxeter groups},
   journal={Comment. Math. Helv.},
   volume={81},
   date={2006},
   number={3},
   pages={665--697},
   }

\bib{G}{book}{
   author={Geoghegan, R.},
   title={Topological methods in group theory},
   series={Graduate Texts in Mathematics},
   volume={243},
   publisher={Springer},
   place={New York},
   date={2008},
   pages={xiv+473},
   }

\bib{Hee}{unpublished}{
   author={H\'ee, J.-Y.},
   title={Sur la torsion de Steinberg--Ree des groupes de Chevalley et des groupes de Kac--Moody},
   status={Th\`ese d'\'Etat de l'Universit\'e Paris 11 Orsay},
   date={1993},
   }

\bib{Houghton}{article}{
   author={Houghton, C. H.},
   title={Ends of locally compact groups and their coset spaces},
   note={Collection of articles dedicated to the memory of Hanna Neumann,
   VII},
   journal={J. Austral. Math. Soc.},
   volume={17},
   date={1974},
   pages={274--284},
   }

\bib{HM}{unpublished}{
   author={Howlett, R. B.},
   author={M{\"u}hlherr, B.},
   title={Isomorphisms of Coxeter groups which do not preserve reflections},
   date={2004},
   status={preprint}
   }

\bib{HRT}{article}{
   author={Howlett, R. B.},
   author={Rowley, P. J.},
   author={Taylor, D. E.},
   title={On outer automorphism groups of Coxeter groups},
   journal={Manuscripta Math.},
   volume={93},
   date={1997},
   number={4},
   pages={499--513},
   }

\bib{Klein}{article}{
   author={Klein, T.},
   title={Filtered ends of infinite covers and groups},
   journal={J. Pure Appl. Algebra},
   volume={208},
   date={2007},
   number={2},
   pages={727--732},
   }

\bib{KR}{article}{
   author={Kropholler, P. H.},
   author={Roller, M. A.},
   title={Relative ends and duality groups},
   journal={J. Pure Appl. Algebra},
   volume={61},
   date={1989},
   number={2},
   pages={197--210},
   }

\bib{Malcev}{article}{
   author={Mal'cev, A. I.},
   title={On homomorphisms onto finite groups (in Russian)},
   journal={Uchen. Zap. Ivanov Gos. Ped. Inst.},
   volume={18},
   date={1956},
   pages={49--60},
   note={English translation in: Amer. Math. Soc. Transl. Ser. 2, 119 (1983) 67--79},
   }

\bib{MM}{article}{
   author={Marquis, T.},
   author={M{\"u}hlherr, B.},
   title={Angle-deformations in Coxeter groups},
   journal={Algebr. Geom. Topol.},
   volume={8},
   date={2008},
   number={4},
   pages={2175--2208}
   }

\bib{Scott}{article}{
   author={Scott, P.},
   title={Ends of pairs of groups},
   journal={J. Pure Appl. Algebra},
   volume={11},
   date={1977/78},
   number={1-3},
   pages={179--198},
   }

\end{biblist}
\end{bibdiv}

\end{document}